\newcommand{\NPHI}{\ncal_{\phi_{\lambda}}}
\newcommand{\gfrak}{{\mathfrak G}}
\newcommand{\VT}{\overline{V}_{T,\epsilon}}
\newcommand{\R}{{\mathbb R}}
\newcommand{\C}{{\mathbb C}}
\newcommand{\Q}{{\mathbb Q}}
\newcommand{\Z}{{\mathbb Z}}
\newcommand{\dbar}{\bar\partial}
\newcommand{\ddbar}{\partial\dbar}
\renewcommand{\phi}{\varphi}
\newcommand{\acal}{\mathcal{A}}
\newcommand{\ccal}{\mathcal{C}}
\newcommand{\fcal}{\mathcal{F}}
\newcommand{\hcal}{\mathcal{H}}
\newcommand{\ical}{\mathcal{I}}
\newcommand{\lcal}{\mathcal{L}}
\newcommand{\ncal}{\mathcal{N}}
\newcommand{\pcal}{\mathcal{P}}
\newcommand{\rcal}{\mathcal{R}}
\newcommand{\scal}{\mathcal{S}}
\newcommand{\ep}{\varepsilon}
\newcommand{\half}{{\frac{1}{2}}}
\renewcommand{\phi}{\varphi}
\newtheorem{theo}{{\sc Theorem}}[section]
\newtheorem{cor}[theo]{{\sc Corollary}}
\newtheorem{defn}[theo]{{\sc Definition}}
\newtheorem{defin}[theo]{{\sc Definition}}
\newtheorem{rem}[theo]{{\sc Remark}}
\newtheorem{lem}[theo]{{\sc Lemma}}
\newtheorem{prop}[theo]{{\sc Proposition}}
\title{Nodal intersections and geometric control}
\author{John A. Toth}
\address{Department of Mathematics and Statistics, McGill University, Montreal, CANADA}
\email{jtoth@math.mcgill.ca}
\author{Steve Zelditch }
\address{Department of Mathematics, Northwestern  University, Evanston, IL 60208, USA}
\email{zelditch@math.northwestern.edu}
\thanks{Research of J.T. was partially supported by NSERC Discovery Grant \# OGP0170280, an FRQNT Team Grant and the French National Research Agency project Gerasic-ANR-13-BS01-0007-0.
Research of S.Z. was partially supported by NSF grant  \# DMS-1541126   }
\begin{document}

\maketitle

\begin{abstract}   We prove that the number of nodal points on an {\it $\scal$-good} real
analytic curve $\ccal$  of a sequence $\scal$ of  Laplace
eigenfunctions $\phi_j$ of eigenvalue $-\lambda_j^2$ of a real analytic Riemannian manifold
$(M, g)$  is bounded above
by $A_{g, \ccal} \; \lambda_j$.  Moreover, we prove that the codimension-two Hausdorff measure $\hcal^{m-2}(\NPHI \cap H)$ of nodal intersections with a  connected, irreducible real analytic hypersurface $H \subset M$ is $\leq A_{g, H} \; \lambda_j$.The $\scal$-goodness condition is that  the sequence
of normalized logarithms $\frac{1}{\lambda_j} \log |\phi_j|^2$
does not tend to $-\infty$ uniformly on $\ccal$, resp. $H$.  We further show that a hypersurface satisfying a geometric control condition is  $\scal$-good for a density one subsequence of eigenfunctions.


\end{abstract}
\bigskip

This article is concerned with the growth of the number $n(\phi_{\lambda}, \ccal)$  of zeros of a sequence $\scal = \{\phi_{\lambda_j}\}_{j =1}^{\infty}$ of Laplace eigenfunction  $\phi_{\lambda_j}$ of eigenvalue $-\lambda_j^2$
on a  connected, irreducible real analytic curve $\ccal$ of a real analytic Riemannian
manifold $(M^m, g)$ of dimension $m$ without boundary.  To rule out degenerate cases,
we  assume (as in \cite{TZ}) that the pair
$(\ccal, \scal)$ satisfies a quantitative unique continuation condition  $ \|\phi_{j} |_{\ccal}\|_{L^{2}(\ccal)}
     \geq e^{- a \lambda_j} $  called  $\scal$- {\it goodness}. 
 (Definition \ref{DEFINTRO}).  When $\ccal$ is $\scal$-good, Theorem \ref{INTER}  asserts that there exists a constant
$A$ depending only on $g, \ccal$ so that \begin{equation} \label{UBintro} n(\phi_{\lambda_j}, \ccal)
\leq A \;\lambda_j, \;\; (\lambda_j \in \scal) \end{equation}  (see Figure 1). This bound generalizes Theorem 6 of \cite{TZ}  for Dirichlet/Neumann eigenfunctions of
 piecewise real analytic plane domains to any real analytic  Riemannian
manifold without boundary (of any dimension).  Motivation to study nodal points on curves and related results are discussed in Section \ref{OPEN}.
It is a  special case of estimating the codimension-two Hausdorff measure $\hcal^{m-2}(\NPHI \cap H)$ of nodal intersections with a  connected, irreducible real analytic hypersurface $H \subset M$ and in Theorem
\ref{NODALBOUND} we prove this generalization.

The main `defect' in Theorems \ref{INTER}-\ref{NODALBOUND} is that the condition that $(\ccal,  \scal)$ be $\scal$-good is subtle and difficult to establish. Much of  this article is devoted to providing sufficient conditions for `goodness'. The definition of $\scal$-good makes sense for any connected, irreducible analytic submanifold $H \subset M$, not only curves. One of the main results of this article (Theorems \ref{MASSMICRO}-\ref{mainthm1})   gives  a kind of geometric control condition that a $C^{\infty}$ hypersurface $H \subset M$ be $\scal$-good for a density one subsequence of an orthonormal basis of eigenfunctions. When $\dim M = 2$, the condition applies to curves and gives concrete and purely dynamical  conditions under which  \eqref{UBintro}  holds for a density one subsequence of eigenfunctions  (Theorem \ref{INTER2}).

To state our results, we need some notation. 
 We  denote by
$\{\phi_{j}\}_{j = 0}^{\infty}$  an orthonormal basis of Laplace eigenfunctions,
$$-\Delta \phi_{j} = \lambda_j^2 \phi_{j}, \;\;\; \langle \phi_j, \phi_k \rangle = \delta_{jk}, $$
where $\lambda_0 = 0 < \lambda_1 \leq \lambda_2 \leq \cdots$ and
where  $\langle u, v \rangle = \int_M u v dV_g$ ($dV_g$ being the
volume form). We denote a subsequence $\{j_k\}_{k = 1}^{\infty} $ of (indices of) eigenvalues
by $\scal$. By a slight abuse of notation, we also let $\scal$ denote the associated sequence $\{\lambda_{j_k}\}$ of eigenvalues or the sequence  $\{\phi_{j_k}\}$ of eigenfunctions from the given orthonormal basis.

Let $H\subset M$ be a connected, irreducible analytic submanifold. The assumptions that $H$ is connected, irreducible and analytic will be made
throughout the paper. 
Given a submanifold $H \subset M$, we denote the restriction operator to $H$ by $\gamma_H f = f |_H$. To simplify notation, we also write
$\gamma_H f = f^H$.
The criterion that a pair $(H, \scal)$ be good is stated in terms of the associated
 sequence 
  \begin{equation} \label{ujdef} u_j: =  \frac{1}{\lambda_j} \log |\phi_j |^2 \end{equation} of  normalized logarithms, and in particular their
restrictions  \begin{equation} \label{ujCdef} u_j^{H} : = \gamma_{H} u_j: =  \frac{1}{\lambda_j} \log |\phi_j^{H}|^2 \end{equation}
to $H$. We only consider the goodness of connected, irreducible, real analytic submanifolds.

 \begin{defn}\label{DEFINTRO}
Given a subsequence $\scal: = \{\phi_{j_k}\}$, we say
that a connected, irreducible real analytic submanifold $H \subset M$ is {\it $\scal$-good}, or that
$(H, \scal)$ is a good pair,  if the sequence  \eqref{ujCdef} with $j_k \in \scal$ does
{\bf not} tend to $-\infty$ uniformly on compact subsets of  $H$,
 i.e.  there exists
a constant $M_{\scal} > 0$ so that  $$ \;\;\;\sup_H u_j^{H} \geq - M_{\scal}, \;\; \forall j \in \scal 
.$$ 
 If $H$ is $\scal$-good when $\scal$ is the entire   orthonormal basis sequence, we say that $H$ is {\it completely good}.
   \end{defn}
   The opposite of a good pair $(H, \scal)$ is a bad pair. 
The terminology is not ideal, but was introduced in \cite{TZ} and used
in a number of articles (e.g. \cite{JJ, BR}) and so we continue to use it here.
Note that the connected, irreducible assumption is made to prohibit taking
unions $H_1 \cup H_2$ of two analytic submanifolds, one of which may be good and the other bad. By the definition above, the union would be good but the nodal bounds could be false.

 We denote the nodal set of an eigenfunction $\phi_{\lambda}$
of eigenvalue $- \lambda^2$ by
$$\ncal_{\phi_{\lambda}}= \{x \in M:
\phi_{\lambda} (x) = 0\}. $$  Our first result is the following 

\begin{theo} \label{INTER} Suppose that $(M^m,g)$ is a real analytic Riemannian manifold
of dimension m without boundary and that $\ccal \subset M$ is connected, irreducible real analytic
curve. If $\ccal$ is $\scal$-good, then there exists a constant $A_{\scal, g}$ so that 
$$n(\phi_j, \ccal) : = \# \{\ccal \cap \ncal_{\phi_j}\} \leq A_{\scal, g} \; \lambda_j, \;\; j \in \scal.$$
\end{theo}

Section \ref{INTERSECT} is devoted to the proof of Theorem \ref{INTER}.
As in \cite{TZ,Zint}, we prove the bound on nodal points on curves of Theorem \ref{INTER} by analytic
continuation of the eigenfunctions and curves to the complexification of $M$.
 Complexification is useful for upper bounds since the number $n(\phi_{\lambda}^{\C}, \ccal_{\C})$  of zeros of the complexified
eigenfunction on the complexified curve is $\geq$ the number of real zeros, 
i.e.
\begin{equation} \label{MORE} n(\phi_{\lambda}^{\C}, \ccal_{\C}): = 
\#\{\ncal_{\lambda}^{\C} \cap \ccal_{\C} \} \geq n(\phi_{\lambda}, \ccal):= \#\{\ncal_{\lambda}^{\R} \cap \ccal \}.
\end{equation}The same
technique was used in \cite{Zint} to obtain lower bounds on the number
of intersections of geodesics with the nodal set when  the geodesic flow is ergodic. Since there is a significant overlap with \cite{Zint,ZSt}, we refer to those
articles for much of the backround on complexification. 
The special case of Theorem \ref{INTER} where $M$ is a surface and $H$ is a $C^{\omega}$-curve was proved in \cite{CT} using a somewhat different frequency function approach.

\begin{center}
\includegraphics[width=6cm]{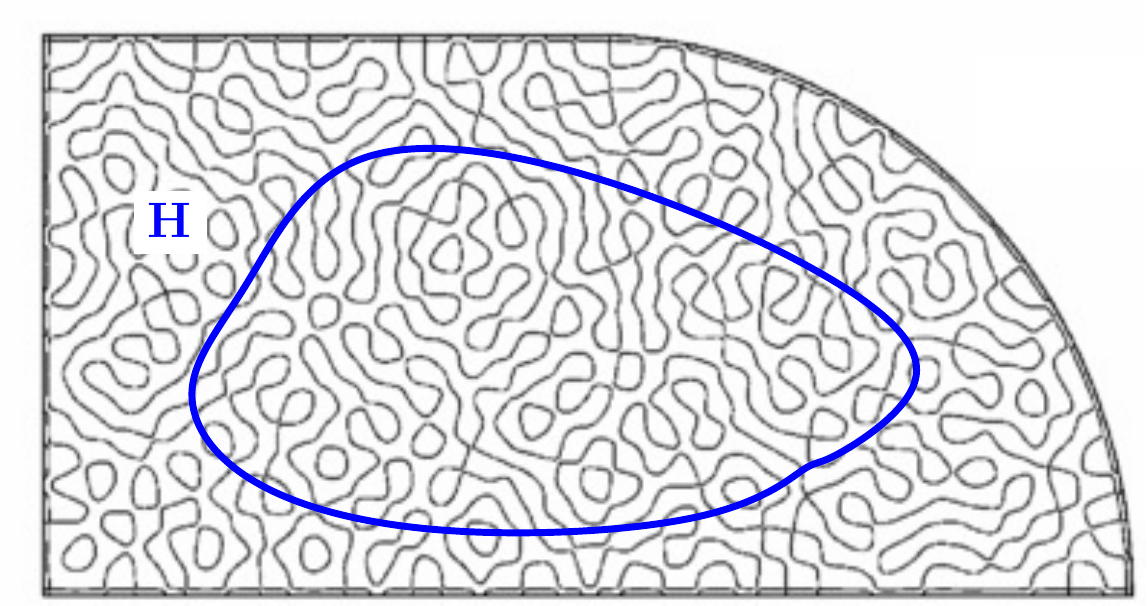}\ \\
\footnotesize{Figure 1: Nodal lines of a high energy state, $\lambda \sim 84$, in the quarter stadium.}
\end{center}

     We then generalize the theorem to real analytic hypersurfaces $H \subset M^m$ for manifolds of any dimension $m$. We separate out the statements and proofs because a new integral geometric method adapted from \cite{ZSt} is used in higher dimensions.

\begin{theo} \label{NODALBOUND} Let $(M^m, g)$ be a real analytic Riemannian manifold of dimension $m$ and let $H \subset M$ be a connected, irreducible, $\scal$-good real analytic hyperurface.
 Then, there exists a constant $C > 0$ depending only on
 $(M, g, H)$ so that
$$\hcal^{m-2} (\ncal_{\phi_{j_k}}  \cap H)  \leq C \lambda_{j_k}, \;\;(j_k \in \scal).$$

\end{theo}

The remainder
of the Introduction is concerned with criteria for goodness.

\subsection{\label{GDSECT} Measures of goodness }

There are some natural parameters associated with a good pair $(H, \scal)$. 
The first is the density of $\scal$. 
We recall that  the density of a set $\scal \subset  \mathbb{N}$ is defined by by
\[D^*(\scal):=
\lim_{X\to \infty } \frac{1}{X}|\{j\in \scal~|~ j<X\}|,
\] when the limit exists.  
When the limit does not exist we refer to the $\limsup$ as the upper density and the
$\liminf$ as the lower density.  We say ``almost all" when $D^*(\scal) = 1$
and if $(H, \scal)$ is a good pair with $D^*(\scal) = 1$ then we say that
$H$ is `almost completely good'.

The second natural parameter is the rate of decay of $||\phi_j^H||$ in the $L^2$-norm or sup-norm. 
  In  \cite{TZ,ET},   a  curve or other submanifold was defined to be  {\it good}  if
 there exists a constant $a > 0$ so that  for all $\lambda_j$ sufficiently large,
 \begin{equation}
\label{OLDGOOD} \|\phi_{j}^H\|_{L^{2}(H)}
     \geq e^{- a \lambda_j} .\end{equation}
      In \cite{ET} a `revised goodness' condition was defined by the  apriori stronger criterion that $\|\phi_{j}^H\|_{L^{\infty}(H)}
     \geq e^{- a \lambda_j} $.
     In \S \ref{EQUIVSECT} we show that \eqref{OLDGOOD} (and the sup-norm analogue)  are equivalent to
     Definition \ref{DEFINTRO}.  
  
A much stronger quantitative goodness condition is a uniform lower bound
$||\phi_{j_k}^H||_{L^2(H)} \geq C_{\scal}$ for the $L^2$-norms of restricted eigenfunctions in the sequence $\scal$. Somewhat surprisingly, our main criterion for goodness produces subsequences of density $\geq 1 - \delta$
for any $\delta > 0$ which  possess uniform lower bounds $C_{\delta}>0$.

\subsection{\label{MICROGOOD} A sufficient microlocal condition for goodness of a hypersurface} In this section, we give our main criterion for almost complete goodness of a hypersurface in the strong sense that the restrictions possess uniform lower bounds in the sense just mentioned. The criterion consists of two conditions on $H$: (i) asymmetry 
with respect to geodesic flow, and (ii) a full measure flowout condition.

  We begin with (i).    In \cite{TZ2}, a geodesic asymmetry condition on a hypersurface  was introduced
     which is sufficient that restrictions of quantum ergodic eigenfunctions on $M$  remain quantum ergodic on the hypersurface. It is reviewed in Definition \ref{ANC} and is the same as
      Definition 1 of \cite{TZ2} as well as  \cite{TZ3,DZ}. 
  It turns out that
the same asymmetry condition plus a flow-out condition implies that   a
     hypersurface is good for a density one  subsequence  of eigenfunctions  and that for any $\delta> 0$, the $L^2$ norms of the restricted eigenfunctions
     have a uniform lower bound $C_{\delta} > 0$ for a subsequence of density $1 - \delta$.   The asymmetry condition pertains to the
 two `sides' of $H$, i.e. to the two lifts of $(y, \eta) \in B^* H$ to unit 
 covectors $\xi_{\pm}(y, \eta) \in S^*_H M$ to $M$. We denote the symplectic volume measure on $B^* H$ by $\mu_H$. We define the symmetric
 subset $B_S^* H$ to be the set of $(y, \eta) \in B^*H$ so that 
 $G^t(\xi_+(y, \eta)) = G^t(\xi_-(y, \eta))$ for some $t \not= 0$.
     
     \begin{defin} \label{MADEF} $H$ is microlocally asymmetric if $\mu_H(B_S^*H) = 0$. \end{defin}

                 Next we turn to the  flow-out condition (ii). It  is that \begin{align} \label{ASSUME}
\mu_L ( \rm{FL}(H) ) = 1. \end{align} where 
      \begin{equation} \label{FLOWOUT} {\rm{FL}(H)}: =   \bigcup_{ t \in \R} G^t ( S_H^*M \setminus S^*H) \;\;  \end{equation}   
      is the geodesic  flowout of 
of the non-tangential unit cotangent vectors $S^*_H M \setminus S^*H$ along $H$. 
 Since $H$ is a hypersurface, $S^*_H M \subset S^* M$ is also a hypersurface which is almost everywhere transverse to the geodesic flow, i.e. it is a symplectic transversal (see \cite{TZ2}). It follows that
the flowout is an invariant set of positive measure in $S^*M$. When $G^t$ is ergodic on $S^* M,$  since $\rm{FL}(H)$ is $G^t$-invariant with $\mu_L(\rm{FL}(H)) >0,$   it follows that  every hypersurface satisfies \eqref{ASSUME},  but
 we do not assume ergodicity here.   In section \ref{examples}, we show that a large class of curves satisfy (\ref{ASSUME}) on  surfaces with completely  integrable geodesic flows. These include convex surfaces of revolution and Liouville tori satisfying generic twist assumptions. 

 The next result is a sufficient condition that $H$ be almost completely good.

\begin{theo} \label{MASSMICRO} Suppose that $H$ is a  microlocally asymmetric hypersurface satisfying \eqref{ASSUME}.

Then: if  $\scal = \{\phi_{j_k}\}$ is a sequence of eigenfunctions
satisfying $ ||\phi_{j_k} |_H||_{L^2(H)} = o(1)$, then the upper density
$D^*(\scal)$  equals zero. \end{theo}

 The following theorem gives a more quantitative version:

\begin{theo} \label{mainthm1}
Let $H \subset M$ be a microlocally asymmetric hypersurface satisfying \eqref{ASSUME}. 
Then, for any $\delta >0,$ there exists a subset $\scal(\delta) \subset \{1,...,\lambda \}$ of density $D^*(\scal(\delta)) \geq 1-\delta$ such that 
$$ \| \phi_{\lambda_j} \|_{L^2(H)} \geq C(\delta) >0,\quad j \in \scal(\delta).$$
\end{theo}




As mentioned above, the  assumption $ ||\phi_{j_k} |_H||_{L^2(H)} = o( 1)$ is  much 
weaker than the $\scal$- badness of $H$.
In fact, we do not know any microlocal (or other techniques) that prove
goodness without proving the stronger positive lower bound. There
do exist other non-microlocal  techniques which directly prove goodness.  In
 \cite{JJ}, J.  Jung
     proved that geodesic distance circles and horocycles in the hyperbolic plane are good relative to eigenfunctions on compact or finite area hyperbolic surfaces.  In \cite{ET} it is proved that curves of positive geodesic curvature  are good for Neumann or Dirichlet quantum ergodic eigenfunctions on a Euclidean plane domain.

          \subsection{The main results on counting nodal points on curves or measuring Hausdorff measures  on hypersurfaces}
A combination of Theorems \ref{INTER} and \ref{mainthm1} gives the main result on nodal intersections:

\begin{theo} \label{INTER2}
Let $\ccal $ be an asymmetric   $C^{\omega}$ curve on a compact, closed, $C^{\omega}$ Riemannian surface $(M^2,g)$ satisfying \eqref{ASSUME}.  Then, for any $\delta >0$ there exists a   subsequence $\scal(\delta)$ with $D^*(\scal(\delta)) \geq 1-\delta$ for which $\ccal$ is $\scal'$-good and a constant $A_{\scal,g}(\delta)>0$ such that
$$ n(\phi_j, \ccal) : = \# \{\ccal \cap \ncal_{\phi_j}\} \leq A_{\scal, g}(\delta) \; \lambda_j, \,\,\,\,\, j \in \scal(\delta).$$
\end{theo}\

  The higher dimensional generalization is as follows:
  \begin{theo} \label{NODALBOUND2}
Let $H $ be an asymmetric   $C^{\omega}$ hypersurface  of a compact, closed, $C^{\omega}$ Riemannian manifold $(M^m,g)$ satisfying \eqref{ASSUME}.  Then, for any $\delta >0$ there exists a   subsequence $\scal(\delta)$ with $D^*(\scal(\delta)) \geq 1-\delta$ for which $\ccal$ is $\scal'$-good and a constant $A_{\scal,g}(\delta)>0$ such that
 
$$\hcal^{m-2} (\NPHI \cap H)  \leq A_{\scal, g}(\delta) \; \lambda_j, \,\,\,\,\, j \in \scal(\delta).$$

\end{theo}\

\subsection{Relating weak* limits on $M$ and on $H$}

The rest of the article is devoted to proving Theorems \ref{MASSMICRO}-
 \ref{mainthm1}, which together with Theorem \ref{INTER} imply  Theorem \ref{INTER2}. These results belong to the theory of weak* limits and geometric control theory and seem to us to have an independent interest.
 
 We recall that an invariant measure $d\mu$ for the geodesic flow on $S^*M$ is
called a microlocal defect (or defect measure, or quantum limit) if there exists a sequence $\{\phi_{j_k}\}$ of eigenfunctions
such that $\langle A \phi_{j_k}, \phi_{j_k} \rangle_{L^2(M)} \to \int_{S^*M} \sigma_A d\mu$ for all pseudo-differential operators
$A \in \Psi^0(M)$. There are analogous notions for semi-classical pseudo-differential operators. We assume familiarity with
these notions and refer to \cite{Zw} for background.

In Section \ref{ME} we relate matrix elements of eigenfunctions on $M$ to
those of their restrictions to a hypersurface $H$. This material is largely drawn from \cite{TZ2}, and we review the necessary background in Section \ref{BACKGROUND}. There is an obvious relation between matrix elements on $M$ and matrix elements on $H$ given in Lemma \ref{COMPLEM}. It involves a time average $\VT(a)$ of $\gamma_H^* Op_h(a) \gamma_H$.
In \cite{TZ2}, $\VT(a)$ was decomposed into a pseudo-differential term $P_{T, \epsilon}$ and a Fourier integral term $F_{T, \epsilon}$ (see Proposition \ref{VTDECOMPa}). The symbol of  $P_{T, \epsilon}$ is essentially
a flow-out of $a$ using that $S^*_H M$ is a sort-of cross-section to the geodesic flow.\footnote{As discussed below, it is not even literally a cross
section of $FL(H)$.}  It was proved in \cite{TZ2} (see also \cite{DZ}) that for asymmetric hypersurfaces,  the matrix elements of
$F_{T, \epsilon}$ tend to zero almost surely. For the sake of completeness,  sketch the proof in   Section \ref{FTSECT}  that for any $(T, \epsilon)$ there exists a subsequence $\scal_F$ of density one so that the matrix
elements  $\langle F_{T, \epsilon} \phi_{j_k}, \phi_{j_k} \rangle_{j_k \in \scal_F} \to 0$.   We exploit this fact in the following Proposition, which 
relates microlocal defect measures (quantum limits) of the eigenfunctions and of their restrictions to $H$.  To state the result 
precisely, we need some further notation.
For fixed $\epsilon \in (0,1),$ let $\chi_{\epsilon}^H \in C^{\infty}_0(B^*H)$ be a cutoff with supp $\chi_{\epsilon}^H \subset \{ (s,\sigma) \in B^*H:  | |\sigma|_s - 1 | < \epsilon \}.$

\begin{prop} \label{FTPROP} Suppose that $H$ is asymmetric. Then, for any $T,\epsilon >0$ there exists a density-one sequence $\scal_F(T,\epsilon)$ such that   for $a \in S^0(H),$ $$\lim_{k \to \infty; \, j_k \in \scal_F(T,\epsilon)}  \Big( \langle Op_H(a(1-\chi_{\epsilon}^H) ) \phi_{j_k} |_H, \phi_{j_k} |_H \rangle_{L^2(H)}  - \langle P_{T, \epsilon}(a) \phi_{j_k}, \phi_{j_k} \rangle_{L^2(M)} \Big) = 0. $$
\end{prop}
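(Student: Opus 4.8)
The plan is to combine three ingredients that are already in hand: the comparison identity of Lemma~\ref{COMPLEM} relating matrix elements on $M$ to matrix elements on $H$; the pseudodifferential/Fourier--integral splitting of the time average $\VT$ in Proposition~\ref{VTDECOMPa}; and the asymptotic vanishing of the Fourier--integral term along a density-one subsequence, recalled in Section~\ref{FTSECT} (following \cite{TZ2}, see also \cite{DZ}).

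First, I would apply Lemma~\ref{COMPLEM} with the symbol $b = a(1-\chi_\epsilon^H)$: since each $\phi_{j_k}$ is an exact eigenfunction of the semiclassical wave group, conjugating $\gamma_H^* Op_h(b)\gamma_H$ by that group does not change its diagonal matrix element, so (after normalizing the averaging weight)
\[
\langle Op_H\!\big(a(1-\chi_\epsilon^H)\big)\,\phi_{j_k}|_H,\,\phi_{j_k}|_H\rangle_{L^2(H)} = \langle \VT\!\big(a(1-\chi_\epsilon^H)\big)\,\phi_{j_k},\,\phi_{j_k}\rangle_{L^2(M)} + o(1).
\]
The role of the cutoff $1-\chi_\epsilon^H$ is to excise a neighborhood of the glancing set $\{|\sigma|_s = 1\}\subset \partial B^*H$, on which $S_H^*M$ fails to be transverse to the geodesic flow; away from it, $S_H^*M$ is a symplectic (near-)cross-section and the flow-out construction below is legitimate.

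Next, I would invoke Proposition~\ref{VTDECOMPa}, which on the support of $a(1-\chi_\epsilon^H)$ writes $\VT\big(a(1-\chi_\epsilon^H)\big) = P_{T,\epsilon}(a) + F_{T,\epsilon}(a) + R_{T,\epsilon}$, where $P_{T,\epsilon}(a)\in\Psi^0(M)$ has principal symbol essentially the geodesic flow-out of $a$ through $S_H^*M$, $F_{T,\epsilon}(a)$ is a Fourier--integral operator whose canonical relation is carried by the nontrivial first returns of geodesics to $H$, and the diagonal matrix elements of $R_{T,\epsilon}$ tend to $0$. Substituting into the previous display reduces the Proposition to showing that $\langle F_{T,\epsilon}(a)\phi_{j_k},\phi_{j_k}\rangle_{L^2(M)} \to 0$ along a density-one subsequence, uniformly in $a$.

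Finally, this last point is where asymmetry enters: $\mu_H(B_S^*H)=0$ forces the canonical relation of $F_{T,\epsilon}(a)$ to meet the diagonal only over a $\mu_H$-null set, so the local Weyl law / stationary-phase estimate of Section~\ref{FTSECT} gives, for each fixed $a$, that the average of $|\langle F_{T,\epsilon}(a)\phi_{j_k},\phi_{j_k}\rangle|^2$ over the eigenvalues $\lambda_{j_k}\le\lambda$ vanishes as $\lambda\to\infty$, hence convergence to $0$ along a density-one subsequence. To make the subsequence $\scal_F(T,\epsilon)$ independent of $a$, I would run this extraction along a countable family of symbols dense in the relevant seminorm, diagonalize, and then pass to all $a\in S^0(H)$ using the uniform bound $\|F_{T,\epsilon}(a)\|_{L^2(M)\to L^2(M)}\leq C(T,\epsilon)\,p(a)$ together with $\|\phi_{j_k}\|_{L^2(M)}=1$. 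The main obstacle is this final step: it is the only place the asymmetry hypothesis is genuinely used, to kill the diagonal (stationary-phase) contribution of the Fourier--integral term, and it also requires the passage from one subsequence per symbol to a single subsequence for all symbols. This is precisely the mechanism of the quantum-ergodic-restriction theorems of \cite{TZ2,DZ}, which we import rather than reprove, so the remaining work (the time-averaging identity and the bookkeeping around the glancing cutoff) is routine.
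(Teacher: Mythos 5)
Your proposal is correct and follows essentially the same route as the paper: Lemma \ref{COMPLEM} (time-average identity, using that the wave group acts on each $\phi_{j_k}$ by a phase), the decomposition $\VT(a)=P_{T,\epsilon}(a)+F_{T,\epsilon}(a)+R_{T,\epsilon}(a)$ of Proposition \ref{VTDECOMPa}, and the density-one vanishing of the $F_{T,\epsilon}$ matrix elements under asymmetry via the variance/local Weyl law argument of Lemma \ref{FTOUT}. Your added remarks (normalizing the averaging weight, and the diagonal extraction over a countable dense family of symbols to get a single subsequence for all $a$) are standard refinements of the same argument, not a different method.
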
 
To simplify notation, in the following we will simply write $\scal_F:= \scal_F(T,\epsilon)$ suppressing the dependence on $T,\epsilon>0.$
It is necessary in general to remove a density zero subsequence. For instance, special sequences of Gaussian beams along a geodesic $\gamma$   blow up when restricted to $\gamma$. 


The following Theorem asserts that the microlocal defect measures  on $S^*M$ of  typical subsequences on $M$ induce finite measures on $S^*_H M$ and $B^*H$. This cannot be true for all subsequences in general, since restrictions of   subsequences of  eigenfunctions to hypersurfaces can blow up in the $L^2$ norm. This happens for instance in the case of highest-weight spherical harmonics $\phi_{k}(x,y,z) = c_0 \, k^{\frac{1}{4}} \, (x+ iy)^k; \, k=1,2,3,...$ with $(x,y,z) \in S^2$ and $H = \{ (x,y,z) \in S^2; z =0 \}.$  

\begin{theo} \label{MDMPROP} 
Suppose that $H$ is a  microlocally asymmetric hypersurface.  Then, there exists a density-one  subsequence $\tilde{\scal}$ with the property that to any microlocal defect measure $d\mu$ of a subsequence $\scal \subset \tilde{\scal} $ there corresponds 
a `disintegration measure' $d\mu_{\scal}^H$ on $B^* H$ such that  
$$\langle Op_H(a) \phi_{j_k}|_H, \phi_{j_k}|_H \rangle \to \int_{B^* H} a \, d\mu_{\scal}^H, \quad a \in S^0(H). $$
\end{theo}

Since $B^*H$ is diffeomorphic to $S^{*, \pm}_H M$, one can rewrite the integral in Theorem \ref{MDMPROP} over $S_{H}^*M$ instead of $B^*H.$ The QER theorem of  \cite{TZ} is  the special case where $\mu = \mu_L$ (Liouville measure) and the geodesic flow $G^t: S^*M \to S^*M$ is ergodic
with respect to $\mu_L$. 





The definition of $d \mu_{\scal}^H$ is given in Section \ref{DEFMES} and is essentially the relation between a flow-invariant measure on $S^*M$ and its
disintegration in terms of  an induced invariant measure on the cross section $S^*_H M$. But as explained in Section \ref{DISSECT}, $S^*_H M$ is not a genuine cross-section and one cannot always express the disintegration measure as a measure on $S^*_H M$. This obstruction is responsible for the possible deletion of a zero density subsequence. We mainly use Theorem \ref{MDMPROP} in the case where $d\mu_{\scal}^H = 0$, which forces $d\mu_{\scal}^M = 0$. This can be compared with the possible microlocal defect measures of $\scal$ on $S^*M$, showing that they must have zero integrals against $\sigma_{P_{T, \epsilon}}$.

\begin{rem} It would be interesting to see if the hypotheses of Theorem \ref{MASSMICRO} (and the related results on weak* limits of 
restrictions) can be weakened, and if the conclusion can be strengthened. For instance, one `loss' of a density zero subsequence occurs
in  Lemma \ref{FTOUT}. But  it is possible that $\langle F \phi_{j_k}, \phi_{j_k} \rangle $ tends to zero for the entire sequence. 
It is then  possible that the hypotheses imply $H$ is $\scal$-good
for the entire sequence of $\phi_j$. It is also possible that asymmetry alone is a sufficient hypothesis for the density one statement. \end{rem}

\subsection{Pluri-subharmonic theory and goodness}

It is natural to ask if the theory of PSH (pluri-subharmonic) functions can help identify good curves.  As mentioned above, `goodness' is a much weaker condition than possession of uniform lower $L^2$ bounds.  In Section \ref{GOODSECT} we draw some rather modest conclusions from the literature of PSH functions. The weakness of the conclusions is due to the fact that they are valid for general sequences of
PSH functions and do not make full use of the assumption that our sequences 
are log-moduli of eigenfunctions \eqref{ujdef}. What seems to be lacking is a theory of  $L^1$ limits of normalized log-moduli of complexified eigenfunctions (\eqref{ujdef} or  \eqref{ujCdef}).  For instance, no connection is known relating  such limits to the geodesic flow. Developing a microlocal theory of such limits seems to us a fundamental problem.

 Except in Sections \ref{INTERSECT} and \ref{GOODSECT} we do not employ complex analytic methods.

\subsection{\label{OPEN}Related results and open problems}

There are several motivations to study nodal points on curves.  Nodal sets and curves have complementary dimension, so that the number of intersections is finite under a suitable transversality hypothesis. The goodness assumption gives a strong formulation of this transversality. 

  One motivation is that Crofton's formula
 expresses the  Hausdorff measure of a hypersurface $Y$
sets as the average number  of intersections   of $Y \subset M^m$ with a random
line (or geodesic arc). When $Y = \ncal_{\phi_{\lambda}}$ is a nodal hypersurface, this method was used in \cite{DF} to obtain upper bounds on $\hcal^{m-1}(\ncal_{\phi_{\lambda}})$. More precisely, Crofton's formula implies that $$\hcal^{m-1}(\ncal_{\phi_{\lambda}}) \leq \int_{\lcal} \#\{L \cap    \ncal_{\phi_{\lambda}}\} d\mu(L)$$
where $\lcal$ is the set of unit geodesic arcs and $d\mu$ is the Crofton measure \cite[p. 164]{DF} (and \cite[p. 178]{DF}.  As explained there, for  polynomials  of degree $\lambda$, 
$\#\{L \cap  \ncal_{P_{\lambda}}\} \leq C \lambda$ $d\mu$-almost everywhere, and a more complicated argument establishes the integral
bound for eigenfunctions.  A related argument is given in  \cite[Lemma 3.2]{Lin} using Crofton's formula \cite[(3.21)]{Lin} and
an upper  bound on the number of zeros of a non-zero analytic function in the unit disc in terms of its frequency function.  In  \cite{ZSt}, the analogous sharp  upper bound for nodal sets of Steklov eigenfunctions was  proved using Crofton's formula.  
The potential theoretic facts of Section \ref{GOODSECT} show that  $\#\{L \cap  \ncal_{\phi_{\lambda}}\} \leq C \lambda$ $d\mu$-almost everywhere for eigenfunctions.

Counting zeros on curves is also the mechanism for obtaining lower bounds 
on numbers of nodal domains on certain surfaces (see e.g. \cite{GRS,JJZ}).  In contrast to this article, the main point is to obtain lower bounds on numbers of nodal points on special curves rather than upper bounds.

Another question raised and studied  by Bourgain-Rudnick \cite{BR} is
to characterize the possible submanifolds $Y$ on which some sequence $\scal$ of eigenfunctions vanishes.  In our language,  $Y$ is {\it nodal} (Definition \ref{NODALCURVE}), which is an extreme form of $\scal$-bad. Theorem \ref{MASSMICRO} shows that $D^*(\scal) = 0$ if $Y$ is an  asymmetric hypersurface satisfying \eqref{ASSUME}.  This is non-trivial, since the standard example of odd eigenfunctions vanishing on the fixed point set of an isometric involution shows that a positive density sequence can vanish on a hypersurface. But the results of this paper  do not determine whether there exists a subsequence of density zero vanishing on such a hypersurface. The Bourgain-Rudnick question can be generalized as follows:  \bigskip

     \noindent{\bf Problem}  Characterize submanifolds $H$ which are $\scal$-bad for some subsequence. Moreover, characterize $H$ which are bad for a  positive   density subsequence  $\scal = \{\phi_{j_k}\}$ of eigenfunctions, that is, 
 $ ||\phi_{j_k} |_H||_{L^2(H)} \leq e^{- M \lambda_{j_k}}$ for
all $M$.   Must the sequence actually vanish on $H$? \bigskip

On a flat torus all periodic geodesics are $\scal$-bad; in fact, they are nodal in the sense of Definition \ref{NODALCURVE} (see subsection \ref{nodal curves}). On the other hand, if $H \subset \R^2 / (2\pi \Z)^2$ is a strictly convex curve, it is proved in \cite{BR} that 
\begin{equation} \label{brbound}
\| \phi_{\lambda} \|_{L^2(H)} \geq  C_H >0.\end{equation}
Consequently, any such curve $H$ is good. 
We note that if $H$ is strictly convex, it is not hard to show that $H$ is microlocally asymmetric in the sense of Definition \ref{MADEF} and also satisfies the flowout assumption $\mu_L( \rm{FL}(H)) =1.$ Consequently, the lower bound in (\ref{brbound}) is also a consequence of our Theorem \ref{mainthm1}, albeit only for an eigenfunction sequence of density arbitrarily close to one. 


The methods of this article and of \cite{TZ} are rather different, though both are based on analytic continuation. In this article we analytically continue the Poisson-wave kernel. At the present time, the analytic continuation is only known for manifolds without boundary (see \cite{ZPl,L,St}). The analytic continuation is based on parametrix constructions which are not known at present 
for general manifolds with boundary. This is obviously an interesting problem. 
Parametrices are known for diffractive (concave) boundaries, and that would be a natural first step.

    In \cite{TZ} we used the analytic continuation of  Euclidean
 layer potentials of $\R^2$ for bounded analytic domains as semi-classical Fourier integral operators. 
 This construction should generalize to all dimensions and also to 
 complete manifolds of negative curvature, where it is known that layer
 potentials are singular Fourier integral operators. The latter statement may hold in a suitable sense for domains in general complete Riemannian manifolds but to our knowledge this also remains an open problem.

   \subsection{Acknowledgements} We thank J. Galkowski for discussions 
   of our geometric control condition, and Z. Rudnick for discussions of his results with Bourgain on nodal curves and hypersurfaces.

\section{\label{GOODSECT} Good curves and submanifolds}

The definition of `goodness' in Definition 
\ref{DEFINTRO} is motivated by properties of sequences of subharmonic functions, and they are used in the proof of Theorem \ref{INTER}. The sequence $u_j$ is not subharmonic on $M$ but
has a natural extension to the complexification $M_{\C}$ of $M$ as 
subharmonic functions.  We denote the extension of \eqref{ujdef} by

  \begin{equation} \label{ujdefC} u_j^{\C}: =  \frac{1}{\lambda_j} \log |\phi_j^{\C} |^2, \end{equation} and their restrictions \eqref{ujCdef} to a complexified analytic
  submanifold $H_{\C}$ by 
  \begin{equation} \label{ujdefCC} u_j^{H,\C} : = \gamma_{H_{\C}} u_j^{\C}: =  \frac{1}{\lambda_j} \log |\phi_j^{\C} |_{H^{\C}}|^2. \end{equation}

 As we show in section \ref{EQUIVSECT},  the Definition \ref{DEFINTRO}  of `good' is equivalent to the following complex version:
 \begin{defn}\label{CXGOOD}
Given a subsequence $\scal: = \{\phi_{j_k}\}$ of eigenfunctions,  we say
that a connected, irreducible  real analytic submanifold $H \subset M$  is {\it $\scal$-good} if the sequence  \eqref{ujdefCC} with $j_k \in \scal$ does
{\bf not} tend to $-\infty$ uniformly on compact subsets of  $H_{\C}$,
 i.e.  there exists
a constant $M_{\scal} > 0$ so that  $$ \;\;\;\sup_{H_{\C}} u_j^{H,\C} \geq - M_{\scal}, \;\; \forall j \in \scal.$$ 
Otherwise we call $H$ $\scal$-bad.
  \end{defn}
  
  Here, $H_{\C}$ refers to some Grauert tube of $H$ in $M_{\C}$. The Definition does not depend on the specific radius, nor whether we use the 
intrisinc Grauert tube of $H$ or the intersection of $H_{\C}$ with a Grauert tube $M_{\epsilon}$ of $M$.

 Thus,  $H$ is $\scal$-bad if the $u_j^{H,\C} \to
-\infty$ unformly on compact subsets of $H$. 
   If $H$ fails to be good, then there  exists a sequence $\scal$ 
  so that $H$ is $\scal$-bad and we refer to $H$ as a {\it bad} sequence
  for $H$.  The simplest example of a bad pair $(H, \scal)$ is where
  the the eigenfunctions of $\scal$ vanish on $H$;  in this case we say that
  $H$ is a nodal  submanifold (see Definition \ref{NODALCURVE}.) Examples of nodal  hypersurfaces are fixed point sets of  an isometric involution, and then  $H$ is $\scal$-bad for the sequence of odd eigenfunctions

 It is also obvious that
if   a real analytic arc $\beta$, or piece of a real analytic submanifold $H$. is bad then the entire analytic
continuation of it $H$ is bad.

 These definitions are motivated by the  standard compactness Lemma 
 for families subharmonic functions (see  \cite{LG} or \cite{Ho2},Theorems 3.2.12-3.2.13).
 Let $v^*$ denote the USC (upper semi-continuous) regularization of $v$. 
\begin{lem} \label{HARTOGS} For any compact  connected irreducible analytic Riemannian manifold $(M, g)$,  and any real analytic submanifold $H$,
the family of  pluri-subharmonic functions \eqref{ujdefCC},
$$\fcal^{H}: = \{ u_j^{H,\C}, \; j = 1,  2, \dots \} $$ on $H_{\tau}$   is precompact in 
$L^1_{loc}(H_{\tau})$ as long as 
it does not converge  uniformly to $-
\infty$ on all compact subsets of $H_{\tau}$. Moreover:

\begin{itemize}

\item 
$\limsup_{k \to \infty} u_{k}^{H,\C}(t + i \tau)
 \leq 2 |\tau_H| $.

\item Let  $\{u_{j_k}^{H,\C}\}$ be any subsequence of  $\{u_j^{H,\C}\}$ with a unique $L^1_{loc}$ limit
$v$ on $S_{\epsilon}$ and let $v^*$ be its USC regularization.
 Then if  $v^* <  2 |\tau_H| - \epsilon$ on an open set $U \subset S_{\epsilon}$  then  $v^*  \leq 2 |\tau_H| - \epsilon$
for $\tilde{U} = \bigcup_{t \in \R} (U + t)$  and 
\begin{equation} \label{YBAD} \limsup_{k \to \infty} u_{j_k}^{H,\C}\leq |\tau_H| - \epsilon \;\;\; \mbox{on}\;\; \tilde{U}. \end{equation}

\end{itemize}

\end{lem}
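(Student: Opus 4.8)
\textbf{Proof plan for Lemma \ref{HARTOGS}.}

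The plan is to reduce everything to the classical compactness and semicontinuity theory for families of subharmonic functions (as in \cite{LG} or \cite{Ho2}, Thm.\ 3.2.12--3.2.13), applied to the functions $u_j^{H,\C}$ on the Grauert tube $H_{\tau}$. First I would record that each $u_j^{\C}=\frac{1}{\lambda_j}\log|\phi_j^{\C}|^2$ is plurisubharmonic on $M_{\C}$, being $\frac{2}{\lambda_j}\log|{\cdot}|$ of a holomorphic function, and that its restriction to the complex submanifold $H_{\C}$ is again plurisubharmonic (restriction of PSH to a complex submanifold is PSH), so $\fcal^H$ is a genuine family of PSH functions on $H_{\tau}$. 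The dichotomy in the classical compactness lemma is exactly: either the family converges uniformly to $-\infty$ on every compact subset, or it is precompact in $L^1_{loc}$; since we have excluded the first alternative by hypothesis, precompactness follows immediately. This is the first bullet-free part of the statement.

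Next I would establish the uniform upper bound $\limsup_k u_k^{H,\C}(t+i\tau)\le 2|\tau_H|$. The key input here is the standard pointwise bound on analytically continued eigenfunctions in the Grauert tube: $|\phi_j^{\C}(\zeta)|\le C\lambda_j^{N}\,e^{\lambda_j\,\sqrt{\rho}(\zeta)}$ where $\sqrt{\rho}$ is the Grauert tube function (this is the standard Poisson-wave / FBI estimate used in \cite{Zint,ZSt}, to which the paper explicitly defers for complexification background). Taking $\frac{1}{\lambda_j}\log|\cdot|^2$, the polynomial prefactor contributes $O(\lambda_j^{-1}\log\lambda_j)\to 0$, and on the slice $t+i\tau$ one has $\sqrt{\rho}=|\tau_H|$ (or $\le|\tau_H|$), giving the bound $2|\tau_H|$ in the limit. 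So this bullet is not really about subharmonicity at all; it is the a priori exponential growth estimate for the complexified eigenfunctions, imported from the cited background.

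The substantive point is the second bullet: if a convergent subsequence has $L^1_{loc}$ limit $v$ with USC regularization $v^*<2|\tau_H|-\epsilon$ on an open set $U$, then $v^*\le 2|\tau_H|-\epsilon$ on the $\R$-translated set $\tilde U=\bigcup_{t\in\R}(U+t)$, and \eqref{YBAD} holds there. Here I would exploit the translation structure of $H_{\tau}$: in the Fermi-type coordinates $(t,\tau)$ adapted to $H$ (with $t$ along $H$), the real translation $t\mapsto t+s$ is an isometry fixing the Grauert tube, and $u_j^{H,\C}(t+s+i\tau)$ is again the log-modulus of (the analytic continuation of) the restriction of $\phi_j$ — the point being that translation commutes with the relevant structure so the family $\fcal^H$ is invariant under it in a suitable sense, or at least its $L^1_{loc}$-limit set is. More robustly: since $v$ is a PSH function whose USC regularization is $<2|\tau_H|-\epsilon$ on $U$, and since $v^*\le 2|\tau_H|$ everywhere by the first bullet, a Hartogs-lemma / maximum-principle argument along the leaves of the translation foliation propagates the strict bound from $U$ to all translates; the standard upgrade from "$\limsup$ of PSH functions $\le$ constant on a set where the $L^1$-limit's regularization is strictly smaller" to the pointwise $\limsup$ bound \eqref{YBAD} is precisely Hartogs' lemma (\cite{Ho2}, Thm.\ 3.2.13): if $v_k\to v$ in $L^1_{loc}$ with $v^*\le c-\epsilon$ on a compact set $K$, then $\limsup_k v_k\le c-\epsilon/2$ (say) on $K$, and one reabsorbs constants to get the stated $|\tau_H|-\epsilon$. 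I expect the main obstacle to be the propagation step from $U$ to $\tilde U$: one must check carefully that the translation really does act on the complexified restrictions $u_j^{H,\C}$ (i.e.\ that $H_{\C}$ and its Grauert tube are genuinely translation-invariant in the chosen coordinates, or that translation maps the family into a family with the same limit set), and then that subharmonicity along the translation direction, combined with the uniform ceiling $2|\tau_H|$, forces the strict inequality to spread — this is where one uses that a subharmonic function bounded above and strictly below its maximum on an open set cannot recover the maximum on a connected translate without violating the sub-mean-value property.
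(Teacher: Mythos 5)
Your treatment of the compactness dichotomy and of the first bullet is fine and is essentially the paper's own route: the paper does nothing more here than invoke the Lelong--Gruman/H\"ormander compactness theorems for subharmonic families (\cite{Ho2}, Theorems 3.2.12--3.2.13), note that they are local and hence apply on the complexified submanifold, and use the Grauert-tube growth estimate of \cite{ZPl} (the bound \eqref{UBa} used later in the proof of Theorem \ref{INTER}) to supply the uniform local upper bound and the ceiling $2|\tau_H|$.

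The genuine gap is in your argument for the second bullet. Subharmonicity together with the global ceiling $2|\tau_H|$ cannot force the strict bound to spread from $U$ to $\tilde{U}=\bigcup_{t\in\R}(U+t)$, and your closing claim that ``a subharmonic function bounded above and strictly below its maximum on an open set cannot recover the maximum on a connected translate'' is false. Concretely, on the strip $\{|\tau|<a\}$ put $B=2|\tau_H|$ and $v(t+i\tau)=B-\epsilon+c\,e^{-\tau}\cos t$ with $0<c\le \epsilon e^{-a}$: this is harmonic, satisfies $v\le B$ everywhere, is $<B-\epsilon$ on the open set where $\cos t<0$, yet equals $B-\epsilon+c>B-\epsilon$ at points of the very same horizontal band. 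Writing $v=\frac{1}{\lambda}\log|e^{\lambda g}|^{2}$ with $g=\tfrac12(B-\epsilon+c\,e^{iz})$ shows that even the structure ``normalized log-modulus of a holomorphic family'' does not rescue the maximum-principle argument; the propagation must use input specific to eigenfunctions. Your fallback, translation invariance, is also unjustified: for a general real analytic curve the shift $t\mapsto t+s$ in the arclength parameter merely moves the base point along the curve and is not a symmetry of the restricted eigenfunctions, so neither the family $\{u_j^{H,\C}\}$ nor its $L^1_{loc}$ limit set is translation invariant (contrast the geodesic case of \cite{Zint}, where $t$-translation is the geodesic flow on $\partial M_\tau\cong S^*M$ and one can invoke flow-invariance of weak* limits of $|\phi_j^{\C}|^2$). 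Once the bound $v^*\le 2|\tau_H|-\epsilon$ on $\tilde{U}$ is somehow known, your appeal to Hartogs' lemma to deduce \eqref{YBAD} on compact subsets is correct (up to the factor discrepancy between $2|\tau_H|-\epsilon$ and $|\tau_H|-\epsilon$, which is in the paper's statement and which ``reabsorbing constants'' does not by itself explain), but the propagation step is the real content of this bullet and your proposal does not establish it; note that the paper itself offers no argument for it beyond the citation of the classical compactness results, so this step genuinely requires an additional, eigenfunction-specific idea.
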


The conditions of connectedness and irreducibilty arise from this Lemma. 
The original statement in \cite[Theorem 3.2.12]{Ho2} pertains to sequences
of subharmonic functions on connected open sets $U \subset \R^n$. Since the theorem is local it generalizes with no essential change to connected, irreducible complexified
hypersurfaces of $M_{\C}$. Clearly, connectedness is necessary: as mentioned in the introduction, unions $H_1 \cup H_2$ of a disjoint good and bad
hypersurface would be good. If $H_1 \cap H_2 \not= \emptyset$ then
$H_1 \cup H_2$ might be connected but $H_1 \cup H_2$ would still be good. 
The condition of `connected irreducible' means that $H$ has only one component. Hence in taking unions,  each hypersurface separately must be good.

\subsection{Bad submanifolds are polar}

In this section we review results on sequences of pluri-subharmonic functions which imply that $\scal$-bad sets are polar. This is not a restriction on real analytic curves or hypersurfaces (since they are necessarily polar) but is useful in proving the equivalence of different notions of `good'.

Let $\{u_j\}$ be the sequence \eqref{ujdef} of pluri-subharmonic functions.
Let $u \in L^1(M_{\tau})$. We say  that a subsequence $\{u_{j_k}\}_{j_k \in \scal}$ is a {\it u-sequence } if  $u_{j_k} \to u$ in $L^1(M_{\tau})$.

\begin{defn} Suppose that  $\{u_j\}_{j \in \scal}$
is a u-sequence. Define $$W_{\scal} =\{z \in M_{\tau}: \limsup_{j \to \infty} u_j(z) < u(z)\}. $$
\end{defn}

The following Proposition 1.39  from \cite{LG} (see also Theorem 1.27 of \cite{LG} and    Theorem 3.4.14 of \cite{Ho2})  will be
relevant:

  \begin{prop} \label{HORM} If $\scal = \{u_j\}$ is a sequence of pluri-subharmonic functions on an open
set $U$ and
$u_j \to u$ in $L^1(U)$ then the set of points $W_{\scal} $ in $U$ where
$$\limsup_{j \to \infty} u_j < u $$
is pluri-polar. \end{prop}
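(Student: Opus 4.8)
\textbf{Proof proposal for Proposition \ref{HORM}.} The plan is to reduce the statement to the classical Brelot--Cartan ``negligible set'' theorem in the pluri-potential setting: for a countable family of pluri-subharmonic functions that is locally bounded above, the set where the (pointwise) supremum differs from its upper semi-continuous regularization is pluri-polar. This is precisely the content of Theorem 3.4.14 of \cite{Ho2} (and underlies Proposition 1.39 of \cite{LG}), so it may be quoted; the work is to organize the $\limsup$ as a decreasing limit of such suprema.

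First I would normalize the setup. Since the $u_j$ are PSH and $u_j \to u$ in $L^1_{loc}(U)$, the sub-mean-value inequality forces the family $\{u_j\}$ to be locally uniformly bounded above on $U$, and the $L^1_{loc}$ limit $u$ is a.e.\ equal to a unique PSH function, by which we replace $u$; then $W_{\scal} = \{z : \limsup_j u_j(z) < u(z)\}$ is unambiguous. The Hartogs lemma (sub-mean-value inequality plus $L^1_{loc}$ convergence and upper semi-continuity of $u$) gives $\ell := \limsup_j u_j \le u$ everywhere on $U$, so $W_{\scal} = \{\ell < u\}$.

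Next set $v_j := \sup_{k \ge j} u_k$. This is a decreasing sequence of Borel functions with $v_j \downarrow \ell$; each $v_j$ is a countable supremum of PSH functions that is locally bounded above, so its USC regularization $v_j^*$ is PSH and the set $E_j := \{v_j < v_j^*\}$ is pluri-polar by the negligibility theorem quoted above. Put $E := \bigcup_j E_j$, a pluri-polar set (countable unions of pluri-polar sets are pluri-polar). Off $E$ we have $v_j = v_j^*$ for every $j$, so the PSH functions $v_j^*$ decrease to a PSH function $w$ (not $\equiv -\infty$ on any component, since the family is bounded below in $L^1_{loc}$), and $w = \ell$ off $E$. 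Two estimates identify $w$ with $u$: applying the Hartogs lemma to the tail $\{u_k\}_{k \ge j}$ gives $v_j^* \le u + \varepsilon$ on any fixed compact $K \subset U$ for $j$ large, hence $w \le u$; and $v_j^* \ge v_j \ge u_k$ for all $k \ge j$ gives $\int_K v_j^* \ge \int_K u_k$, and letting $k \to \infty$ then $j \to \infty$ yields $\int_K w \ge \int_K u$. Thus $w = u$ a.e.\ on $U$, and since two PSH functions that agree a.e.\ coincide, $w = u$ everywhere. Therefore $W_{\scal} = \{\ell < u\} = \{\ell < w\} \subseteq E$ is pluri-polar.

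The only non-elementary input is the Brelot--Cartan negligibility result used for the sets $E_j$; everything else (local uniform boundedness above, the Hartogs lemma, uniqueness of PSH representatives, $w \not\equiv -\infty$) is routine and follows from the standard compactness theory of PSH functions. I do not expect a genuine obstacle here beyond correctly invoking that classical theorem and handling the normalization of $u$; the main care is simply in writing $\limsup_j u_j$ as the decreasing limit $\lim_j v_j^*$ modulo a pluri-polar set.
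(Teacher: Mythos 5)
Your argument is essentially correct, but note first how it relates to the paper: the paper does not prove Proposition \ref{HORM} at all — it is quoted as Proposition 1.39 of \cite{LG} (see also Theorem 3.4.14 of \cite{Ho2}) and used as a black box. What you have done is re-derive that quoted result from the negligibility theorem (Bedford--Taylor in the plurisubharmonic case, Brelot--Cartan in the subharmonic case): you write $\limsup_j u_j$ as the decreasing limit of $v_j^*=(\sup_{k\ge j}u_k)^*$ off the pluripolar set $E=\bigcup_j\{v_j<v_j^*\}$ and identify $\lim_j v_j^*$ with $u$. The reduction is sound: the local uniform upper bound coming from $L^1$ convergence, the PSH representative of $u$, the pointwise inequality $\limsup_j u_j\le u$, the countable-union stability of pluripolar sets, and the identification $w=u$ via $\int_K w\ge\int_K u$ together with $w\le u$ all go through. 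Two remarks. First, the step ``$v_j^*\le u+\varepsilon$ on $K$ by Hartogs'' is loose as written, because the Hartogs lemma compares against \emph{continuous} majorants and $u$ is only upper semicontinuous; either approximate $u$ from above by continuous functions, or — simpler — note that $w=\limsup_j u_j\le u$ off the pluripolar (hence Lebesgue-null) set $E$, and two PSH functions with $w\le u$ a.e.\ satisfy $w\le u$ everywhere by the sub-mean-value inequality, which makes the Hartogs step unnecessary. Second, be aware that the input you quote is not lighter than the cited Proposition: for countable families the negligibility theorem already carries the full strength of Bedford--Taylor (Choquet's lemma reduces the general case to countable families), and the $L^1$-limsup form and the sup-regularization form are standard equivalents in \cite{LG} and \cite{Ho2}. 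So your proof is a correct bridge between two formulations of the same classical theorem, whereas the paper simply cites one of them.
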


\begin{defn} 

Given a subsequence $\scal \subset {\mathbb N}$, we define
$\pcal_{\scal} \subset M_{\tau}$ be the set of points $z$  satisfying 
$$\limsup_{j \in \scal} u_j(z) = - \infty. $$

\end{defn}

Thus,  $\pcal_{\scal} \subset W_{\scal}$ and  $\pcal_{\scal}$ is  contained in a pluri-polar set. The   Hausdorff dimension 
 of a polar set in $\R^m$  is $\leq m - 2$ (\cite{Ho2}).  Since the statement is local the proof applies to $W \subset M_{\tau}$.

\subsection{\label{EQUIVSECT} Equivalence of different notions of goodness}
Here we prove  the equivalence of the following notions
of goodness for a real analytic function on a real analytic curve. \bigskip

\begin{enumerate}

\item Goodness in the sense of  Definition \ref{DEFINTRO}.\bigskip
     
     \item Goodness in the sense of Definition \ref{CXGOOD}.\bigskip

\item Goodness in the sense that $ \|\phi_{j} |_H\|_{L^{2}(H)}
     \geq e^{- a \lambda_j} $. \bigskip

     \item Goodness in the sense of $ \|\phi_{j} |_H\|_{L^{\infty}(H)}
     \geq e^{- a \lambda_j} $.

\end{enumerate}

Each {\it goodness} criterion implies that there is a point  $q \in H_{\epsilon}$ where $\lim_{j \to \infty} u_j^H(q) \geq - M $ for some $M > 0$.  Hence,
they all imply goodness in the sense (2) of   Definition \ref{CXGOOD}. The main
content of the equivalence is that the latter criterion (2) implies (1)-(3). This is non-obvious since these criterion only involve the behavior of $u_j$ on the real points of $H_{\C}$.

\begin{prop} If $H$ is a real analytic curve, then (1)-(4) are equivalent. 
\end{prop}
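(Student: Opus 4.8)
The plan is to prove the cycle of implications $(2) \Rightarrow (1) \Rightarrow (3) \Rightarrow (4) \Rightarrow (2)$, but in fact the only substantive arrow is $(2) \Rightarrow (1)$ (or equivalently $(2)\Rightarrow(3)$), since the other directions are either trivial or routine. The easy arrows first: $(4) \Rightarrow (3)$ is immediate because $\|\phi_j|_H\|_{L^\infty(H)} \geq e^{-a\lambda_j}$ together with the bound $\|\phi_j|_H\|_{L^\infty(H)} \leq C\lambda_j^{(m-1)/2}\|\phi_j|_H\|_{L^2(H)}$ — wait, that goes the wrong way, so instead use that $\|\phi_j|_H\|_{L^2(H)} \leq \vol(H)^{1/2}\|\phi_j|_H\|_{L^\infty(H)}$, giving the wrong direction again; the correct easy observation is that $(3)$ follows from $(4)$ only after noting $H$ is a \emph{curve} so $L^\infty$ control plus the elliptic/Bernstein estimate $\|\phi_j|_H\|_{L^\infty(H)} \leq C\lambda_j^{1/2}\|\phi_j|_H\|_{L^2(H)}$ actually gives $(4)\Rightarrow(3)$ with a polynomial loss that is absorbed into the exponent since $\lambda_j^{1/2} e^{-a\lambda_j} \geq e^{-2a\lambda_j}$ for $\lambda_j$ large. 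Conversely $(3) \Rightarrow (4)$ by a reverse Sobolev/Bernstein estimate on the real-analytic curve, or more simply both $(3)$ and $(4)$ are seen equivalent to $(2)$ directly. And $(1) \Leftrightarrow (2)$ modulo the hard direction: $(1) \Rightarrow (2)$ is trivial since real points of $H_{\C}$ are contained in $H_{\C}$, so $\sup_{H_{\C}} u_j^{H,\C} \geq \sup_H u_j^H \geq -M_\scal$.

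The heart of the matter is $(2) \Rightarrow (1)$: knowing that $\sup_{H_{\C}} u_j^{H,\C}$ does not tend to $-\infty$, deduce that $\sup_H u_j^H$ does not tend to $-\infty$. I would argue by contradiction combined with the compactness Lemma \ref{HARTOGS} and the polarity results of the preceding subsection. Suppose $(1)$ fails, so along a subsequence $\scal$ we have $\sup_H u_j^H \to -\infty$; I want to show $\sup_{H_{\C}} u_j^{H,\C} \to -\infty$ as well (on compact subsets of the Grauert tube $H_\tau$), contradicting $(2)$. By Lemma \ref{HARTOGS}, either the family $\{u_j^{H,\C}\}_{j \in \scal}$ converges uniformly to $-\infty$ on all compact subsets of $H_\tau$ — which is exactly what we want — or it is precompact in $L^1_{loc}(H_\tau)$, so we may pass to a further subsequence with an $L^1_{loc}$ limit $v \not\equiv -\infty$ and USC regularization $v^*$. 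In the latter case, because $u_j^H \to -\infty$ uniformly on the compact real curve $H$ (which sits inside $H_\tau$ as the "real axis" $\{\tau_H = 0\}$), and the convergence $u_{j_k}^{H,\C}\to v$ in $L^1_{loc}$ forces $\limsup_k u_{j_k}^{H,\C} \leq v^*$ pointwise while on $H$ itself the $\limsup$ is $-\infty$, we get $v^* = -\infty$ on the real curve $H$.

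Now the real curve $H$, viewed inside the one-complex-dimensional (since $\ccal$ is a curve, $H_\tau$ is a Riemann surface, a tube around $\ccal$) complex manifold $H_\tau$, is a real-analytic arc, hence a \emph{non-polar} subset of the Riemann surface $H_\tau$ (a real arc in $\C$ has positive capacity / is non-polar; this is the classical fact that real curves are non-pluripolar as subsets of $\C$). A subharmonic function on a connected Riemann surface that equals $-\infty$ on a non-polar set must be identically $-\infty$; equivalently, its $-\infty$ set $\{v^* = -\infty\}$ is polar, so it cannot contain $H$ unless $v^* \equiv -\infty$. Since we assumed $v \not\equiv -\infty$, this is a contradiction. (One must be slightly careful: the dichotomy in Lemma \ref{HARTOGS} is for the full family, so after extracting the convergent subsequence I should argue that \emph{every} subsequence of $\{u_j^{H,\C}\}_{j\in\scal}$ has a further subsequence going uniformly to $-\infty$ — running the above argument on each — whence the whole sequence goes to $-\infty$, giving the contradiction with $(2)$.) The use of connectedness and irreducibility of $\ccal$ (hence of $H_\tau$) is exactly what makes "subharmonic and $-\infty$ on a non-polar set $\Rightarrow$ identically $-\infty$" valid: on a disconnected tube one component could carry $v^*=-\infty$ while another does not.

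The main obstacle is the last step: establishing rigorously that a real-analytic arc $\ccal$ is non-polar (non-pluripolar) inside its complexification $\ccal_\C$, and that the maximum principle / identity theorem for subharmonic functions applies across the real arc to force $v^* \equiv -\infty$ on the connected Riemann surface $H_\tau$. This is where connectedness/irreducibility is essential and where one invokes, via the paragraph on $\pcal_\scal \subset W_\scal$ being pluripolar together with the fact that real-analytic submanifolds are non-pluripolar, that the bad set cannot contain $\ccal$. I expect the remaining bookkeeping — the polynomial-loss Bernstein estimates relating $L^2$, $L^\infty$, and pointwise values of $u_j$ on the analytic curve, needed to cycle $(2) \Rightarrow (3) \Rightarrow (4) \Rightarrow (2)$ or directly $(2)\Rightarrow(3),(4)$ — to be routine: from $v \not\equiv -\infty$ one extracts a real point $q \in \ccal$ with $\liminf_j u_j^H(q) > -\infty$ (using that $\{v^* = -\infty\}$ is polar hence does not exhaust $\ccal$), which immediately yields all of $(1), (3), (4)$ with constants depending on $q$ and $\ccal$ after applying standard upper bounds $|\phi_j^\C| \leq C\lambda_j^{m/4}e^{\lambda_j|\tau|}\|\phi_j\|_{L^2(M)}$ and doubling estimates to pass between a single point value, the $L^\infty$ norm, and the $L^2$ norm on $\ccal$.
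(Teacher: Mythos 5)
Your overall strategy for the hard direction $(2)\Rightarrow(1)$ --- compactness of $\{u_j^{H,\C}\}$ via Lemma \ref{HARTOGS}, polarity of an exceptional set, and non-polarity of the real curve inside the one-complex-dimensional tube $H_\tau$ --- is the same potential-theoretic route the paper takes, but the deduction you actually write down at the pivotal point is invalid. From $u_{j_k}^{H,\C}\to v$ in $L^1_{loc}$ you cite $\limsup_k u_{j_k}^{H,\C}\le v^*$ pointwise, and then conclude from $\limsup_k u_{j_k}^{H,\C}=-\infty$ on $H$ that $v^*=-\infty$ on $H$. The inequality points the wrong way: on $H$ it only says $-\infty\le v^*$, which is vacuous. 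What is needed is the converse statement off a small set, namely that $\{z:\limsup_k u_{j_k}^{H,\C}(z)<v(z)\}$ is pluripolar --- exactly Proposition \ref{HORM} (the negligible-sets theorem), which is what the paper's proof is built on. With it one gets $\limsup_k u_{j_k}^{H,\C}=v^*$ on $H$ minus a polar set; since a real-analytic arc is non-polar in a Riemann surface and removing a polar set leaves it non-polar, your identity-principle step ($v^*$ subharmonic and $=-\infty$ on a non-polar set forces $v^*\equiv-\infty$) then does produce the contradiction. You gesture at precisely this fix in your last paragraph ($\pcal_{\scal}\subset W_{\scal}$ pluripolar, real-analytic submanifolds non-pluripolar), but only as something to ``invoke''; as written, the central step rests on an inequality that cannot deliver the conclusion. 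The paper argues in the direct (non-contrapositive) form: $(2)$ gives $L^1$ precompactness, Proposition \ref{HORM} makes the set where $u_j\to-\infty$ polar, hence of Hausdorff dimension $0$ in $H_{\C}$, so it cannot contain the $1$-dimensional real curve, and in fact one gets a subset $E\subset H$ of $\hcal^1$-measure close to full on which $u_j\ge -M$, which yields $(1)$, $(3)$ and $(4)$ at once.

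A secondary soft spot is your treatment of the norm implications as ``routine''. The implication $(4)\Rightarrow(3)$ (and, more generally, passing from one good point or from $(2)$ to an exponential $L^2$ lower bound) is not immediate: your proposed ``Bernstein estimate'' $\|\phi_j|_H\|_{L^\infty(H)}\le C\lambda_j^{1/2}\|\phi_j|_H\|_{L^2(H)}$ for restrictions to a curve is not a standard fact and is asserted without proof. The paper handles this either through the positive-measure set $E$ produced by the polarity argument (giving $\|\phi_j\|_{L^1(H)}\ge e^{-M\lambda_j}|E|$ directly), or, in its alternative Hadamard three-circles proof, through the sup-norm derivative bound \eqref{poly}, $\|\partial_s\phi_\lambda\|_{L^\infty(H)}=O(\lambda^{n+1/2})$, plus a Taylor expansion: near a point where $|\phi_\lambda|\ge e^{-C\lambda}$ one has $|\phi_\lambda|\ge e^{-C''\lambda}$ on an interval of length $e^{-C'\lambda}$, whence the $L^2$ lower bound. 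Replacing your Bernstein step by one of these arguments, and repairing the polarity step as above, turns your proposal into essentially the paper's first proof.
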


\begin{proof}

First, consider the simplest case where $H$ is a curve such that (2) holds. 
Then $\{u_j^H\}$ is pre-compact in $L^1$. Proposition \ref{HORM} then
implies that the set where $u_j \to - \infty$ is polar in $H_{\C}$. Since it 
has Hausdorff dimension 0 in $H_{\C}$, it cannot contains the real curve
$H$ and there must exist points such that (1) holds. In fact, such points of the real curve must have dimension 1 and so (3)-(4) also hold. 
That is,   for any  $\epsilon > 0$, there exists $M > 0$ and a measureable
subset $E \subset H$ of  $\hcal^1$-measure $\geq 1 - \epsilon$  where  $\frac{1}{\lambda_j} \log |\phi_j(z_0)| \geq -  M$
on $E$. But then $||\phi_{j}||_{L^1(H)} \geq ||\phi_{j}||_{L^1(E)}
\geq e^{- M \lambda_j} |E|.$\bigskip

\end{proof}

 Alternatively, one can prove the equivalences between $(1), (2)$ and $(3)$ using the following Hadamard three circles argument. We first treat the case where $\dim H = 1$ and $n=1.$

\begin{proof} $(3) \implies (2)$ since there must exist a point $q \in H$ at which $|\phi_{\lambda}(q)| \geq e^{-C \lambda}.$

$(2) \implies (3)$  Suppose

\begin{equation} \label{weak good}
\sup_{z \in H_{\C}} | \phi_{\lambda}^{\C}(z)| \geq e^{-C \lambda}. \end{equation}






Let $H, H_{\epsilon_1} = \{ z \in H_{\C}; \sqrt{\rho} = \epsilon_1\}$ and $H_{\epsilon_2} = \{ z \in H_{\C}; \sqrt{\rho} = \epsilon_2 \}$ with $0 < \epsilon_1 < \epsilon_2$ be three level curves  in the tube $H_{\C}.$ Without loss of generality, we also assume that  

$$ \sup_{z \in H_{\epsilon_1}} |\phi_{\lambda}^{\C}(z)| = e^{-C \lambda}.$$
By the Hadamard three circles theorem,  with $0 < \theta <1,$
\begin{align} \label{hadamard}
\sup_{z \in H_{\epsilon_1}} |\phi_{\lambda}^{\C}(z) | &\leq   \sup_{z \in H_{\epsilon_2}} |\phi_{\lambda}^{\C}(z) |^{1-\theta}  \times \sup_{q \in H} |\phi_{\lambda}(q)| ^{\theta} \nonumber \\
&\leq e^{2\epsilon_2 (1-\theta) \lambda} \cdot \| \phi_{\lambda} \|_{L^{\infty}(H)}^{\theta}. \end{align}

In the last line we needed a sup estimate for $|\phi_{\lambda}^{\C}|$. For this, we recall that \cite{ZPl}  
$$ \| \phi_{\lambda}^{\C} \|_{L^{\infty} (H_{\epsilon_2}) } = O( \lambda^{\frac{n-1}{2}} e^{ \epsilon_2 \lambda}) = O(e^{2 \epsilon_2 \lambda}).$$

Consequently, by the weak goodness assumption (\ref{weak good}) and (\ref{hadamard}),

$$ \| \phi_{\lambda} \|_{L^\infty(H)} \geq e^{-C \lambda}.$$

By continuity, we choose $q_0 \in H$ so that

$$|\phi_{\lambda}(q_0)| = e^{-C \lambda}.$$

Let $q: [0,L] \to H$ be the arclength parametrization with arclength parameter $s.$ By  the standard  bound for Laplace eigenfunctions, one also has that

\begin{equation} \label{poly}
\| \partial_s  \phi_{\lambda} \|_{L^\infty(H)} = O( \lambda^{n+1/2}). \end{equation}

Since by (\ref{poly}) the tangential derivative of $\phi_{\lambda}$ along $H$ has at most polynomial growth in $\lambda$, it follows by Taylor expansion along $H$ centered at $q_0$ that there is an subinterval $I(\lambda) \subset H$ containing $q_0$ of length $e^{-C' \lambda}$ with $C'>C>0$ such that for $q \in I(\lambda),$
$$ |\phi_{\lambda}(q) | \geq e^{-C'' \lambda}.$$
Consequently,

$$ \| \phi_{\lambda} \|_{L^2(H)} \geq e^{-C'' \lambda}$$
and so, $H$ is good in the sense of (2).

We note that the argument above using (\ref{poly}) also proves that $(2) \iff (3).$

 \end{proof}
 
The case where $H$ is a real analytic submanifold of dimension $\geq 2$ is more complicated because $H \subset H_{\C}$ has codimension $\geq 2$ and is not ruled out as a pluri-polar set. Rather we use that it is a totally real submanifold. The equivalence then follows from an (unpublished) theorem of B. Berndtsson, which says that if $H \subset \Omega$ is totally real submanifold of a complex manifold $\Omega$ and if $\{u_j\}$ is a sequence of pluri-subharmonic functions converging in $L^1(\Omega)$ to $u$, then
$u_j |_H \to u |_H$ in $L^1_{\rm{loc}}(H)$ \cite[Theorem 3.3]{Ber}. It follows
immediately that (2) implies (1) and (3).

\subsection{Nodal curves} \label{nodal curves}

The only known examples of bad curves are nodal curves in the following sense:

\begin{defn} \label{NODALCURVE} We say that a curve (e.g. a geodesic) $H$ is a nodal curve (geodesic) if there exists
a sequence $\{\phi_{j_k}\}$ of distinct  eigenfunctions which vanish in $H$.
Similarly for submanifolds of higher dimension.
\end{defn}

There are many examples of nodal geodesics.   These include: 

 \begin{itemize}
 
  \item  Rational radial geodesics on the unit disc or rational meridians on the unit
sphere are nodal geodesics. That is, one may fix an axis
 of rotation and consider real and imaginary parts of the associated
 basis $Y^m_{\ell}(\theta, \phi) $ of spherical harmonics to get $ \sin m \theta P_{\ell}^m(\cos \phi)$. 
 Here, $\frac{\partial}{\partial \theta}$ is the generator of the rotations. 
 Obviously the meridians defined by $\sin m \theta = 0$ i.e. $\theta = \frac{j \pi}{m}$  are nodal geodesics through the poles for the sequence with
 $m$ fixed and $\ell$ varying. Since $m$ is arbitrary,
 any `rational meridian' is a nodal geodesic, where rational means that
 the the angle to the fixed meridian $\theta = 0$ is a rational number
 $\frac{j \pi}{m}$ times $\pi$.  \bigskip

\item Fixed point sets of involutions
on surfaces of negative curvature are nodal geodesics.  
Thus, any closed geodesic of the standard $S^2$ is nodal with respect to its
 associated
 odd eigenfunctions. 
 
 \item Periodic geodesics on a flat torus $\R^2 / (2\pi \Z)^2.$ Given a periodic geodesic $\gamma(t) = ( m t, n t); \, 0\leq t \leq 2\pi$ with $(m,n) \in \Z^2,$ the sequence of Laplace eigenfunctions
 $$ \phi_k(x,y) = \sin \, ( k (nx-my) ), \quad (x,y) \in [0,2\pi] \times [0,2\pi], \,\, k=0,1,2,3,...$$
 clearly satisfies $\phi_k |_{\gamma} = 0$ and so $\gamma$ is nodal. In  \cite{BR2} (see Theorem 1.1), Bourgain and Rudnick prove that in fact segments of periodic geodesics are the only real-analytic nodal curves on a flat torus. In higher dimensions, they prove that postively-curved hypersurfaces on the flat torus cannot be nodal.
 
\end{itemize}

It is not clear
at present whether  a geodesic  $H \subset W$ that fails to be good is necessarily
a nodal geodesic. 
 Another question is whether bad nodal curves must be geodesics and
more general bad curves (if they exist) must be geodesics. In the case of
ergodic eigenfunctions, this question is studied in \cite{ET}. 
 Even in the case of the sphere,  the characterization of nodal curves seems a rather difficult and open problem. For instance,  it is unknown whether or not a circle of latitude different from the equator is nodal. The latter question is closely related to a classical conjecture of Stieltjes (see \cite{BR2}).

\section{\label{INTERSECT} Proof of Theorem \ref{INTER} }

The proof of Theorem \ref{INTER} is based on the analytic continuation
of eigenfunctions to a Grauert tube $M_{\tau}$ in the complexification
of $M$.   We will not review the background on Grauert tubes and on
analytic continuation of eigenfunctions and Poisson kernels, but refer 
to \cite{Zerg, ZPl, Zint, ZSt} for the necessary material. 

      We   recall that any real analytic
manifold $M$ admits a Bruhat-Whitney complexification $M_{\C}$,
and that for any real analytic metric $g$ all of the
eigenfunctions $\phi_j$  extend holomorphically to  a fixed open
open neighborhood $M_{\epsilon}$  of $M$ in $M_{\C}$ called a
Grauert tube of radius $\epsilon$. We recall that the square of the Grauert tube function is  $\rho(z) = \frac{1}{4} r^2(z, \bar{z})$
where $r^2$ is the analytic continuation of the distance-square function.
The Grauert tube of radius $\tau$ is denoted by $M_{\tau}$ and its
boundary $\partial M_{\tau}$   is the level set $\sqrt{\rho} = \tau$. 
 Given a real analytic hypersurface $H$, we 
define $H_{\epsilon} : = H_{\C} \cap M_{\epsilon}$.

  We denote the holomorphic extension of an eigenfunction
$\phi_{\lambda}$ by $\phi_{\lambda}^{\C}$, respectively elements
of an orthonormal basis by  by $\phi_j^{\C}$. The complex nodal set is denoted  by
\begin{equation} \label{ncalC} \ncal_{\phi_{\lambda}^{\C}}= \{z \in M_{\C}:
\phi_{\lambda}^{\C} (z) = 0\}. \end{equation}
We also denote the complexification of a real analytic submanifold $H$   by $H_{\C}.$
We  further denote the restriction of an eigenfunction to $H$ by $\phi_j |_{H} $ or equivalently by $\gamma_H^* \phi_j$ and the holomorphic extensions by $\phi_j^{\C} |_{H_{\C}}$.

Let $\alpha_H: H \to M$ be a real analytic paramaterization of 
a real analytic submanifold. In the case of a curve $\ccal$, we use  the  complexification of an arc-length parameterization,
\begin{equation} \label{GAMMAX} \alpha_{\ccal}: \R \to M  \end{equation} 
 The parametrization extends to some strip  $ S_{\epsilon} = \{(t
+ i \tau \in \C: |\tau| \leq \epsilon\} $   as a holomorphic curve
\begin{equation} \label{alphaXCX} \alpha_{H}^{\C}: S_{\epsilon} \to M_{\epsilon}.  \end{equation}
We let $\tau_H $ be the maximal $\epsilon$ for which there exists an 
analytic extension of $\alpha_C$.

The intersection points of $\alpha_{H_{\C}}$ and
$\ncal_{\phi_j}^{\C}$ correspond to the zeros of the pullback
$(\alpha_{H}^{\C})^* \phi_j^{\C}$. When  $\ccal$ is a good curve,
then $\phi_{j}^{\C} |_{\ccal_{\C}}$ has
a discrete set of zeros, we can define the current of summation
over the zero set by
\begin{equation} \label{NCALCURRENT} [\ncal^{\ccal}_{\lambda_j}] = \sum_{ \{ t + i \tau :\; \phi_j^{\C}(\alpha_{\ccal^{\C}}(t + i \tau)) = 0 \} } \delta_{t + i \tau}.
\end{equation}

 Slightly modifiying the definition \eqref{ujdefCC}, we define  the sequence 
\begin{equation} \label{vxxi} v_j^{\ccal} : = \frac{1}{\lambda_{j} } \log \left| \alpha_{\ccal}^* \phi_{\lambda_j}^{\C} (t + i \tau)
\right|^2\end{equation}
of subharmonic functions on the strip $S_{\epsilon} \subset \C$.

By the  Poincar\'e-Lelong formula, 
\begin{equation}\label{PLL}  [\ncal^{\ccal}_{\lambda_j}] = \frac{ i}{\pi} \ddbar_{t + i
\tau} \log \left| \alpha_{\ccal}^* \phi_{\lambda_j}^{\C} (t + i \tau)
\right|^2. \end{equation} 

Put:
\begin{equation} \label{acal} \acal_{L, \epsilon}  (\frac{1}{\lambda} dd^c \log |\phi_j^{\C}|^2) = \frac{1}{\lambda} \int_{S_{\epsilon, L}}
dd^c_{t + i \tau} \log |\phi_j^{\C}|^2 (\alpha_{\ccal} (t + i \tau) ). \end{equation}  
 To prove that $ n(\phi_{\lambda}^{\C}, \ccal_{\C}) \leq A \lambda$, it suffices to show that  there exists $M < \infty$ so that
\begin{equation} \label{acalest} \acal_{L, \epsilon}  (\frac{1}{\lambda} dd^c \log
|\phi_j^{\C}|^2)  \leq M. \end{equation}

To prove \eqref{acalest}, we observe that since  $dd^c_{t + i \tau} \log |\phi_j^{\C}|^2 (\alpha_{\ccal}  (t + i \tau))$ is a positive $(1,1)$ form on the strip, the integral over
$S_{\epsilon}$ is only increased if we integrate against a
positive smooth test function $\chi_{\epsilon} \in
C_c^{\infty}(\C)$ which equals one on $S_{\epsilon, L}$ and vanishes
off $S_{2 \epsilon, L} $. Integrating  by
parts the $dd^c$ onto $\chi_{\epsilon}$, we have 
$$\begin{array}{lll} \acal_{L, \epsilon} (\frac{1}{\lambda} dd^c \log |\phi_j^{\C}|^2) &\leq &  \frac{1}{\lambda}  \int_{\C}
dd^c_{t + i \tau} \log |\phi_j^{\C}|^2 (\alpha_{\ccal} (t + i \tau) )
\chi_{\epsilon} (t + i \tau) \\ && \\  &= & \frac{1}{\lambda}  \int_{\C}
 \log |\phi_j^{\C}|^2 (\alpha_{\ccal} (t + i \tau) )
dd^c_{t + i \tau} \chi_{\epsilon} (t + i \tau) .
\end{array}$$

To complete the proof of \eqref{acalest} it suffices  to prove that
 \begin{equation} \label{logbound}
\limsup_{\lambda_j \to \infty} \frac{1}{\lambda_j} \, \Big| \log |\phi_j^{\C}|^2 (\zeta) \Big| \leq C, \quad \zeta \in S_{\epsilon} \end{equation}
for some $ C> 0$.
Now write $\log |x| = \log_+ |x| - \log_- |x|$. Here $\log_+ |x| = \max\{0, \log |x|\}$ and
$\log_- |x|= \max\{0, - \log |x| \}. $ In view of (\ref{logbound}),   we need upper bounds for 
$$  \frac{1}{\lambda} \int_{\C}
 \log_{\pm} |\phi_j^{\C}|^2 (\alpha_{\ccal}(t + i \tau) )
dd^c_{t + i \tau} \chi_{\epsilon} (t + i \tau).$$

For $\log_+$ the upper bound is an immediate consequence the global upper bound
\begin{equation} \label{UBa} \limsup_{k \to \infty} \frac{1}{\lambda_j} \log |\phi^{\C}_{j_k}(\zeta)|^2 \leq  2 \sqrt{\rho}(\zeta)\end{equation}
 proved in \cite{ZPl} using the complexified wave (ie. Poisson operator). 
 Here, $\sqrt{\rho}$ is the Grauert tube function of $(M,g)$.
On the complexified curve or strip, one lets $A = \sup_{\ccal_{\tau}} \sqrt{\rho} < \infty$ where $\ccal_{\tau}$ is the intrinsic Grauert tube of raidus $\tau$ of the curve,
which in general is not defined by the same as the Grauert tube radius $\sqrt{\rho}$ of
$(M, g)$. The proof is valid for any $\tau > 0$ less than the maximal radius
of analytic continuation of the curve.

For $\log_-$ the lower bound follows from the $\scal$-good assumption
that $\log_- |\phi_j^{\C} |\leq A \lambda_j$. This establishes the bound in (\ref{logbound}) and completes the proof of Theorem \ref{INTER}.

\medskip

\section{\label{HYPER} Proof of Theorem \ref{NODALBOUND}}

In higher dimensions, we use Crofton's formula to prove that  for
$\scal$-good hypersurfaces, 
$\hcal^{m-2}(\NPHI \cap H) $ is bounded above by a certain measure
of the complexified nodal set. We closely follow \cite{ZSt} and refer
there for some of the background.  The principal difference is that we let $(H, g_H)$ with $g_H: = g|_{TH}$
be the Riemannian manifold of \cite{ZSt} instead of $(M,g)$. { Thus, $H \cap \NPHI$ 
is   a real analytic hypersurface (i.e. a real analytic variety of codimension one)  of $H$ in the sense that it is
the subset $\{\phi_j |_H = 0\} \subset H$ defined by the analytic function
$\phi_j |_H$.  The $\scal$-goodness assumption on $H$ implies that
$H \cap \NPHI$ has codimension one since it certainly implies that the restricted analytic function is non-zero.
 It may have a singular set of codimention one in $H \cap \NPHI$.\footnote{One way to prove this is to use Whitney's stratification
 theorem \cite{K}. For hypersurfaces there is probably a simpler proof.}
In the following, we  write $N = H \cap \NPHI$. We retain
$m = \dim M$ so that $m-1 = \dim H$. 

\subsection{Crofton formula} The main result of this section is
Proposition \ref{CROFTONEST}.  To prepare for the statement and proof,
 we introduce some notation and make
some useful observations. Most are from \cite{ZSt} in a section on general
hypersurfaces in Riemannian manifolds, and hence they apply to $N \subset H$ with only a change of notation. We recall some of the statements for
the sake of completeness.

Let $\pi: T^* H \to H$ be the natural projection
 We  denote by $\omega$ the standard symplectic form on
$T^* H$ and by $\alpha$ the canonical one form. As above, we denote by $d\mu_L$  the Liouville measure on $S^*
H$. Then 
$d\mu_L = \omega^{n-1} \wedge \alpha$ on $S^* H$.
We also denote the Hamiltonian generating the geodesic flow $G_H^t$ by the Hamiltonian $ |\xi|_{g_H}$ and its  Hamilton vector field by $\Xi = \Xi_H$.
Note that it is quite different from the geodesic flow of $(M,g)$.

Let $N \subset H$ be a smooth hypersurface in a Riemannian manifold $(H,
g_H)$.  We denote by  $T^*_N H$ the 
of covectors with footpoint on $N$ and $S^*_N H$ the unit covectors along $N$.
We   introduce  Fermi normal coordinates $(s, x_m) $
along
 $N \subset H$,  where  $s$ are coordinates on $H$ and $x_{m-1}$  is the
 normal coordinate, so that $x_{m-1} = 0$ is a local defining function for $N$.   We also let $\sigma, \xi_{m-1}$ be
 the dual symplectic Darboux coordinates. Thus the canonical
 symplectic form is $\omega_{T^* H } = ds \wedge d \sigma + dx_{m-1}
 \wedge d \xi_{m-1}. $
.

\begin{lem} \label{NSYMP} The restriction $\omega |_{S_N^* H}$ is symplectic on $S^*_N H \backslash S^* N$. 
\end{lem}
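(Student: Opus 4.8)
The claim is that $\omega|_{S^*_N H}$ is symplectic on $S^*_N H \setminus S^* N$, i.e.\ it is a nondegenerate two-form on this $(2m-4)$-dimensional manifold. The natural approach is a local computation in the Fermi coordinates $(s, x_{m-1}, \sigma, \xi_{m-1})$ already set up, combined with the observation that $S^*_N H$ is a symplectic transversal to the geodesic flow of $(H, g_H)$ precisely where the geodesics are not tangent to $N$.

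\textbf{Step 1: Reduce to a transversality statement.} First I would recall the general principle (used in \cite{TZ2}) that if $W \subset S^* H$ is a hypersurface, then $\omega|_W$ is nondegenerate at a point $\rho \in W$ if and only if the Hamilton vector field $\Xi_H$ of $|\xi|_{g_H}$ (the geodesic flow generator) is \emph{transverse} to $W$ at $\rho$. The reason: $S^* H$ is a contact manifold with contact form $\alpha|_{S^*H}$, $\Xi_H$ is its Reeb field, and on a hypersurface $W$ the restriction $\omega|_W = d\alpha|_W$ has a one-dimensional kernel spanned by the unique direction in $\ker(d\alpha|_W) \cap T W$, which is nonzero exactly when $\Xi_H \notin T_\rho W$; when $\Xi_H \in T_\rho W$ the kernel is two-dimensional. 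So it suffices to show: for $\rho \in S^*_N H \setminus S^* N$, the geodesic flow vector $\Xi_H(\rho)$ is transverse to $S^*_N H$.

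\textbf{Step 2: Check transversality.} The hypersurface $S^*_N H \subset S^* H$ is cut out (inside $S^* H$) by the single equation $x_{m-1} = 0$. Thus $\Xi_H$ is transverse to it at $\rho$ iff $\Xi_H(x_{m-1}) = \{|\xi|_{g_H}, x_{m-1}\} \neq 0$ at $\rho$. But $\{|\xi|_{g_H}, x_{m-1}\} = \frac{\partial |\xi|_{g_H}}{\partial \xi_{m-1}}$, which is (up to the positive factor $1/|\xi|_{g_H}$) the component $\xi_{m-1}$ of the covector in the conormal direction to $N$ — geometrically, the cosine of the angle between the geodesic direction and the normal to $N$. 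This vanishes exactly when $\rho \in S^* N$, i.e.\ when the covector is tangent to $N$. Hence on $S^*_N H \setminus S^* N$ we have $\Xi_H(x_{m-1}) \neq 0$, so $\Xi_H$ is transverse to $S^*_N H$, and by Step 1, $\omega|_{S^*_N H}$ is nondegenerate there. Since $\omega$ is closed, its restriction is closed, so it is symplectic on $S^*_N H \setminus S^* N$.

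\textbf{Main obstacle.} The only real content is the contact-geometry fact in Step 1 (kernel of $d\alpha$ restricted to a hypersurface in a contact manifold), and keeping the bookkeeping straight between ``tangent to $N$'' and ``$\xi_{m-1}=0$'' in the chosen Darboux coordinates; both are standard but should be stated carefully. An alternative to Step 1 that avoids contact geometry is the direct route: write $\omega|_{S^*_N H}$ in the induced coordinates on $S^*_N H \setminus S^* N$ — using $x_{m-1}=0$ and solving $|\xi|_{g_H}=1$ for one of the remaining momentum variables (legitimate precisely because $\partial_{\xi_{m-1}}|\xi|_{g_H} \neq 0$ there, by the implicit function theorem) — and observe that $\omega = ds\wedge d\sigma + dx_{m-1}\wedge d\xi_{m-1}$ restricts to a form of maximal rank $2(m-2)$. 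I expect the contact-geometric argument to be cleaner, so I would present that, noting that this is essentially the ``symplectic transversal'' observation of \cite{TZ2} applied to $(H, g_H)$ in place of $(M,g)$.
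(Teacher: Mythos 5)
Your proof is correct and follows essentially the same route as the paper, whose one-line argument is exactly your Step 1: since $\ker\bigl(\omega|_{S^*H}\bigr) = \R\,\Xi_H$, the restriction $\omega|_{S^*_N H}$ is nondegenerate precisely where $S^*_N H$ is transverse to $\Xi_H$, which fails only on $S^*N$. Your Step 2 merely makes explicit the transversality computation ($\Xi_H(x_{m-1}) = \partial_{\xi_{m-1}}|\xi|_{g_H} \neq 0$ off $S^*N$) that the paper leaves implicit; just fix the wording in Step 1, where the kernel of $\omega|_W$ is \emph{zero} (not one-dimensional) when $\Xi_H \notin T_\rho W$.
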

Indeed, $\omega |_{S_N^* H}$ is symplectic on $T_{y, \eta} S^* H$ as long as $T_{y, \eta} S^*_N H$ 
is transverse to $\Xi_{y, \eta}$, since $\ker (\omega|_{S^*M}) = \R \Xi. $

It follows from Lemma \ref{NSYMP} that the symplectic volume form of $S^*_H M \backslash S^* N$
is $\omega^{m-2} |_{S_N^* M}$. The following Lemma gives a useful alternative formula:

\begin{lem} \label{dmuLN} 
Define  $$d\mu_{L, N} =  \iota_{\Xi} d\mu_L \;|_{S^*_N H}, $$
where  as above,   $d\mu_L$ is Liouville measure on  $S^* H$.  Then $$d \mu_{L, N}= \omega^{m-2} |_{S_N^* H}. $$
\end{lem}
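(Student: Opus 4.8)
The plan is to compute both sides in the Fermi coordinates $(s,x_{m-1},\sigma,\xi_{m-1})$ already set up, restricting to the unit cosphere bundle $S^*H = \{|\xi|_{g_H}=1\}$ and then to $S^*_N H = \{x_{m-1}=0\}$. First I would recall that on $S^*H$ the Liouville measure is $d\mu_L = \omega^{m-1}\wedge\alpha$ up to the factor coming from the defining function of $S^*H$; more precisely, writing $p=|\xi|_{g_H}$, one has $d\mu_L = \iota_{\Xi}(\omega^{m-1}/(m-1)!)$ where $\Xi$ is the Hamilton vector field of $p$, or equivalently $d\mu_L\wedge dp = \omega^{m-1}/(m-1)!$ on a neighborhood of $S^*H$ in $T^*H$. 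This is the standard description of Liouville measure as the quotient of the symplectic volume by the Hamiltonian, and I would just cite it from \cite{ZSt} since the present section explicitly follows that reference.

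The key computation is then a two-step contraction. Starting from the identity $\omega^{m-1}/(m-1)! = d\mu_L \wedge dp$ on $T^*H$ near $S^*H$, I contract with $\Xi$: since $\iota_\Xi \omega = -dp$ and $\iota_\Xi dp = \Xi p = 0$, the left side gives $\iota_\Xi(\omega^{m-1}/(m-1)!) = (-dp)\wedge \omega^{m-2}/(m-2)!$, and on the right $\iota_\Xi(d\mu_L\wedge dp) = (\iota_\Xi d\mu_L)\wedge dp \pm d\mu_L\wedge(\iota_\Xi dp) = (\iota_\Xi d\mu_L)\wedge dp$. Comparing, and using that $dp$ is, along $S^*_N H$, transverse to the relevant directions (this is exactly Lemma \ref{NSYMP}, which guarantees $\Xi$ is transverse to $T S^*_N H$ off $S^*N$, so that restriction to $S^*_N H$ is compatible with the contraction), I can cancel the common factor involving $dp$ and conclude that $\iota_\Xi d\mu_L$ restricted to $S^*_N H$ equals $\omega^{m-2}/(m-2)!$ restricted to $S^*_N H$, which up to the harmless combinatorial normalization is the claimed formula $d\mu_{L,N} = \omega^{m-2}|_{S^*_N H}$. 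Here I would be slightly careful that the paper's convention absorbs the $(m-2)!$ into the definition of the symplectic volume form, consistent with its earlier statement $d\mu_L = \omega^{n-1}\wedge\alpha$.

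The main obstacle, and the only genuinely delicate point, is bookkeeping the transversality and orientation so that "restrict, then contract" agrees with "contract, then restrict": the contraction identity holds as an identity of forms on the ambient $T^*H$ (or on $S^*H$), and to descend it to $S^*_N H$ one needs that $S^*_N H$ is transverse to $\Xi$, i.e. Lemma \ref{NSYMP}, and that $x_{m-1}$ (the defining function of $N$, hence of $S^*_N H$ inside $S^*H$) pulls back compatibly. Off the tangential locus $S^*N$ this is clean; on $S^*N$ both sides degenerate and the formula is understood on $S^*_N H \setminus S^*N$, matching the statement of Lemma \ref{NSYMP}. Once this is pinned down, the rest is the routine Cartan-calculus manipulation sketched above, and I would present it in a few lines rather than expanding every wedge.
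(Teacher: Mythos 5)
Your argument is correct in substance, but it takes a different route from the paper's. The paper's proof is a one-line contraction of the explicit formula $d\mu_L=\omega^{m-2}\wedge\alpha$ on $S^*H$: since $\iota_{\Xi}\omega=-dp$ (with $p=|\xi|_{g_H}$) pulls back to zero on the level set $S^*H=\{p=1\}$, while $\alpha(\Xi)=p=1$ there by homogeneity, one gets $\iota_{\Xi}d\mu_L=\omega^{m-2}$ immediately and then restricts to $S^*_N H$. You instead work in the ambient $T^*H$ with the characterization $d\mu_L\wedge dp=\omega^{m-1}/(m-1)!$, contract that identity with $\Xi$, and divide by $dp$. This buys independence from the canonical one-form $\alpha$ and the Euler identity, at the cost of extending $d\mu_L$ off the level set, running the division argument, and tracking the factorial normalization you already flag; both are routine Cartan calculus and give the same conclusion up to sign and the $(m-2)!$ convention, which is harmless since \eqref{HNN} only uses $|\omega^{m-2}|$.

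Two small corrections. First, your parenthetical claim $d\mu_L=\iota_{\Xi}\bigl(\omega^{m-1}/(m-1)!\bigr)$ is false: $\iota_{\Xi}\bigl(\omega^{m-1}/(m-1)!\bigr)=(\iota_{\Xi}\omega)\wedge\omega^{m-2}/(m-2)!=-dp\wedge\omega^{m-2}/(m-2)!$, which pulls back to zero on $S^*H$; the contraction description of Liouville measure requires a vector field transverse to the level set (e.g. the radial field $X$ with $dp(X)=1$), not the Hamilton field, which is tangent. Since your actual computation only uses the wedge characterization $d\mu_L\wedge dp=\omega^{m-1}/(m-1)!$, this slip does not affect the argument. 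Second, Lemma \ref{NSYMP} is not what licenses ``contract, then restrict'': because $\Xi$ is tangent to $S^*H$, contraction commutes with pullback to $S^*H$, and an identity of forms on $S^*H$ restricts to the submanifold $S^*_N H$ with no transversality hypothesis. The role of Lemma \ref{NSYMP} (transversality of $\Xi$ to $S^*_N H$ away from $S^*N$) is rather to guarantee that $\omega^{m-2}|_{S^*_N H}$ is nondegenerate, i.e. that $d\mu_{L,N}$ is a genuine volume form, which is how it is used in the Crofton argument that follows.
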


Indeed,  $d \mu_L = \omega^{m-2} \wedge \alpha$, and $ \iota_{\Xi} d\mu_L = \omega^{m-2}$.  As in \cite[Corollary 8]{ZSt}, 
\begin{equation} \label{HNN}\begin{array}{lll} \hcal^{m-2}(N ) & = & \frac{1}{\beta_m}
\int_{S^*_N H} |\omega^{m-2}|. \end{array} \end{equation}

    As reviewed in \cite{ZSt}, a Crofton
formula arises from a double fibration

$$\begin{array}{lllll} && \ical  && \\ &&&&\\
& \pi_1 \;\swarrow & & \searrow \;\pi_2 & \\ &&&& \\ \Gamma  &&&&
B,
\end{array}$$
where $\Gamma$ parametrizes a family of submanifolds $B_{\gamma}$ of $B$. The points  $b \in B$
then parametrize a family of submanifolds $\Gamma_b = \{\gamma \in \Gamma: b \in B_{\gamma}\}$
and the top space is the incidence relation in $B \times \Gamma$ that $b \in B_{\gamma}.$ See  \cite{AB,AP} for background.

We would like to define $\Gamma $ as the space of geodesics of  $H$. This is not a Hausdorff space, so instead of we defined $\Gamma$ to be the
set of $H$-geodesic arcs of some fixed length $L $ (less than the injectivity radius $L_1$ of $H$).

The relevant Crofton formula is the following,

\begin{prop}\label{CROFTONEST}  Let $ N \subset H$ be a real analytic irreducible  hypersurface \footnote{The same formula is true
if $ N$ has a singular set $\Sigma$ with $\hcal^{m-2}(\Sigma)  = 0$}, and let $S^*_N H$ denote
the unit covers to $M$ with footpoint on $N$. Then for
$0 < T < L_1,$ 
$$\hcal^{m-2}( N)  = \frac{ 1}{\beta_mT}  
 \int_{S^* H}
\# \{t \in [- T, T]: G_H^t(x, \omega) \in S^*_N H\} \, d\mu_{L}(x,
\omega),$$
where $\beta_m $ is $2 (m-1)!$ times  the volume of the unit ball in $\R^{m-2}$. 
\end{prop}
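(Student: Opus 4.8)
The plan is to derive the Crofton formula of Proposition \ref{CROFTONEST} from the coarea/double-fibration machinery of \cite{ZSt}, specialized to the Riemannian manifold $(H, g_H)$ in place of $(M,g)$. First I would set up the incidence relation $\ical \subset \Gamma \times S^*H$, where $\Gamma$ is the set of $H$-geodesic arcs $\gamma_{x,\omega}$ of length $L < L_1$ (the injectivity radius of $H$), parametrized by their midpoint $(x,\omega) \in S^*H$ together with a time offset, and $B = N$. The key geometric input is that, since $N$ is a hypersurface in $H$ which (by Lemma \ref{NSYMP}) is symplectically transverse to the $G_H^t$-flow away from $S^*N$, a generic geodesic arc meets $N$ transversally in finitely many points, and the intersection number $\#\{t \in [-T,T]: G_H^t(x,\omega) \in S_N^* H\}$ is an integrable function of $(x,\omega)$ on $S^*H$.

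The main computational step is the coarea-type identity: I would integrate $d\mu_L$ over $S^*H$ against the above intersection-counting function, then interchange the order of integration. Writing the count as $\sum_{j} \mathbf{1}[G_H^{t_j}(x,\omega) \in S_N^*H]$ and using Fermi normal coordinates $(s, x_{m-1}, \sigma, \xi_{m-1})$ along $N$ as in the excerpt, the push-forward of $d\mu_L$ under the flow, sliced at the hypersurface $\{x_{m-1} = 0\}$, produces exactly the contracted form $\iota_{\Xi}\, d\mu_L |_{S_N^* H} = d\mu_{L,N}$. By Lemma \ref{dmuLN} this equals $\omega^{m-2}|_{S_N^*H}$, and by \eqref{HNN} its total mass is $\beta_m\, \hcal^{m-2}(N)$. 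The factor of $T$ in the denominator and the choice $0 < T < L_1$ arise because each arc of length $2T$ below the injectivity radius is counted with the correct multiplicity and the time-average over $[-T,T]$ normalizes the flow parameter; the constant $\beta_m = 2(m-1)!\,\vol(B^{m-2})$ is the same normalization as in \cite[Corollary 8]{ZSt}, tracked through the change of variables.

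For the analytic hypersurface case, I would note that $N = H \cap \NPHI$ is real analytic of codimension one in $H$ by the $\scal$-goodness hypothesis (which forces $\phi_j|_H \not\equiv 0$), and that its possible singular set $\Sigma$ has $\hcal^{m-2}(\Sigma) = 0$ (e.g. by Whitney stratification, as in the footnote), so the formula for smooth $N$ applies on $N \setminus \Sigma$ and both sides are unchanged by deleting $\Sigma$. The main obstacle I expect is the rigorous justification of the interchange of integration and the identification of the sliced measure with $\iota_\Xi d\mu_L$: one must handle the measure-zero set $S^*N$ where transversality fails (Lemma \ref{NSYMP} only gives symplecticity on $S_N^*H \setminus S^*N$), check that arcs lying in $N$ or tangent to $N$ on a positive-codimension set do not contribute, and confirm that finiteness of the intersection count holds $\mu_L$-a.e. — all of which follow the template of \cite{ZSt} but require care in transplanting from $(M,g)$ to $(H,g_H)$ since $G_H^t$ is genuinely different from the ambient geodesic flow.
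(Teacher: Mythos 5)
Your proposal is correct and follows essentially the same route as the paper: the double fibration \eqref{DIAGRAM2}, the identification of the pushed-forward Liouville measure (the Crofton density \eqref{CROFDEN}) with $T\,d\mu_{L,N}$ via Lemmas \ref{NSYMP}--\ref{dmuLN} and \eqref{HNN}, and the co-area formula \eqref{COAREA} to convert the total mass into the intersection count. The technical caveats you flag (tangential directions $S^*N$, a.e. finiteness, the singular set of $N$) are handled in the paper at the same level of detail, by reference to \cite{ZSt} and the footnote.
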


\begin{proof}

We argue as in \cite[Proposition 9]{ZSt} and repeat some of the
arguments there to keep the proof self-contained.  We  define the
incidence relation $$ \ical_T = \{((y, \eta),  (x, \omega), t)
\subset S^*H \times S^*H \times [- T, T]:  (y, \eta) =   G_H^t(x,
\omega)\}, $$ and then define $\ical_{T, N}$ by restricting
 $x \in  N$.
We then consider the diagram,  
\begin{equation} \label{DIAGRAM} \begin{array}{lllll} && \ical_{T} \simeq S^* H \times [-T, T]  && \\ &&&&\\
& \pi_1 \;\swarrow & & \searrow \;\pi_2 & \\ &&&& \\ (S^* H) &&&&
S^* H,
\end{array} \end{equation}
where 
 $$\pi_1(t, x, \xi) = G_H^t(x, \xi), \;\;\; \pi_2(t, x, \xi) = (x, \xi), $$ 

and restrict it to $S^*_N H$ to obtain
\begin{equation} \label{DIAGRAM2} \begin{array}{lllll} && \ical_{T,N} \simeq S^* H \times [-T, T]  && \\ &&&&\\
& \pi_1 \;\swarrow & & \searrow \;\pi_2 & \\ &&&& \\ (S_N^* H)_T &&&&
S_N^* H,
\end{array} \end{equation}
 where
$$(S^*_N H)_{T} = \pi_1 \pi_2^{-1} (S_N^* H) =  \bigcup_{|t| < T} G_H^t(S^*_N H).$$

 We  define the Crofton density $\phi_T$ on $S_N^* H$ corresponding to   the diagram \eqref{DIAGRAM} \cite{AP} (section 4) by 
\begin{equation} \label{CROFDEN} \phi_T = (\pi_2)_* \pi_1^*
d\mu_L. \end{equation}  $\phi_T$
is a differential form of dimension $2 \dim H - 2$ on $S^*H$.  Let  $\chi$  be a smooth cutoff  equal to $1$ on $(- \half, \half)$, and let
$\chi_T(t) = \chi(\frac{t}{T}). $  Then a smooth version of \eqref{CROFDEN} is $\pi_1^* (d\mu_L \otimes
\chi_T dt)$ is a smooth density on $\ical_{T,N}$. As in \cite[Lemma 10]{ZSt} one has,

\begin{lem} \label{phiT} The Crofton density \eqref{CROFDEN} is given by, $\phi_T = T d\mu_{L, N} $ \end{lem}

 Combining Lemma \ref{phiT} with \eqref{HNN} gives
\begin{equation}  \label{HDPHIT} \int_{S^*_N H} \phi_T = \int_{\pi_2^{-1} (S^*_N H)} d\mu_L =  T\beta_m\hcal^{m-2}(N).  \end{equation}

We then relate the integral on the left side to numbers of intersections of $H$-geodesic arcs with $N$. 
By the co-area formula (see \cite[Section 3.2]{ZSt},
\begin{equation} \label{COAREA} \int_{\pi_2^{-1}(S^*_N H)} \pi_1^* d\mu_L = \int_{S^* H}
\# \{t \in [- T, T]: G^t(x, \omega) \in S^*_N H\} d\mu_L(x,
\omega). \end{equation} 

Combining \eqref{HDPHIT} and \eqref{COAREA} gives the  result stated in Proposition \ref{CROFTONEST}.


\end{proof}

\subsection{Complexification}

The next step is to complexify geodesics of $H$ and also the nodal set
$N = \NPHI$. Here, geodesics and exponential maps always refer to geodesics of $H$.

Define
$$F: S_{\epsilon} \times S^*H \to H_{\C}, \;\;\; F(t + i
\tau, x, v) = \exp_x (t + i \tau) v, \;\;\; (|\tau| \leq \epsilon)
$$ 

Let $\sqrt{\rho}_H$ be the Grauert tube funciton of $H$, which in general is distinct from the ambient Grauert tube function of $(M,g)$ denoted above by $\sqrt{\rho}.$  Let $H_{\tau} = \{z \in H_{\C}: \sqrt{\rho}_H(z) < \tau\}$ be the intrinsic Grauert tube of radius $\tau$ of $H$.

 For each $(x, v) \in S^* H$,
$$F_{x, v}(t + i \tau) = \exp_x (t + i \tau) v $$
is a holomorphic strip contained in $H_{\tau}$. Here, $S_{\epsilon} = \{t + i \tau \in \C:
|\tau| \leq \epsilon\}. $ We also denote by
 $S_{\epsilon, L} = \{t + i \tau \in \C:
|\tau| \leq \epsilon, |t| \leq L \}. $

  Since $F_{x, v}$ is a holomorphic function in the strip $S_{\epsilon},$  by Poincar\'{e}-Lelong, 
$$F_{x, v}^*(\frac{1}{\lambda} dd^c \log |\psi_j^{\C}|^2) =
 \frac{1}{\lambda} dd^c_{t + i \tau} \log |\phi_j^{\C}|^2 (\exp_x (t + i \tau)
 v) = \frac{1}{\lambda} \sum_{t + i \tau: \phi_j^{\C}(\exp_x (t + i
 \tau) v) = 0} \delta_{t + i \tau}.
$$ As in \eqref{acal}, put
\begin{equation} \label{acal2} \acal_{L, \epsilon}  (\frac{1}{\lambda} dd^c \log |\phi_j^{\C}|^2) = \frac{1}{\lambda} \int_{S^* H} \int_{S_{\epsilon, L}}
dd^c_{t + i \tau} \log |\phi_j^{\C}|^2 (\exp_x (t + i \tau) v)
d\mu_L(x, v). \end{equation}

A key observation of \cite{DF, Lin} is that
(with $\ncal_{\lambda}: = \NPHI$)  for any $(x,v) \in S^*H,$ 
\begin{equation} \label{MORE2}
\#\{\ncal_{\lambda}^{\C} \cap F_{x,v}(S_{\epsilon, L}) \} \geq \#\{\ncal_{\lambda}^{\R} \cap F_{x,v}(S_{0, L}) \}, 
\end{equation}
since every real zero is a complex zero. 
It follows then from    Proposition \ref{CROFTONEST} (with $N = \ncal_{\lambda}$) that

$$\begin{array}{lll} \acal_{L, \epsilon}  (\frac{1}{\lambda} dd^c \log
|\phi_j^{\C}|^2) &= &  \frac{1}{\lambda} \int_{S^* H}
\#\{ t + i\tau \in S_{\epsilon,L}; \,  F_{x,v}(t+i\tau) \in \ncal_{\lambda}^{\C} \} \,  d
\mu_L(x,v) \\ && \\
&\geq &  \frac{1}{\lambda} \hcal^{m-2}(\ncal_{\lambda} \cap H).\end{array} $$

Hence to obtain an upper bound on $\frac{1}{\lambda} \hcal^{m-2}(\ncal_{\lambda} \cap H),$ it suffices
to prove that there exists $M < \infty$ so that
\begin{equation} \label{acalest2} \acal_{L, \epsilon}  (\frac{1}{\lambda} dd^c \log
|\phi_j^{\C}|^2)  \leq M. \end{equation}

To prove \eqref{acalest2}, we observe that since  $dd^c_{t + i \tau} \log |\psi_j^{\C}|^2 (\exp_x (t + i \tau)
v)$ is a positive $(1,1)$ form on the strip, the integral over
$S_{\epsilon}$ is only increased if we integrate against a
positive smooth test function $\chi_{\epsilon} \in
C_c^{\infty}(\C)$ which equals one on $S_{\epsilon, L}$ and vanishes
off $S_{2 \epsilon, L} $. Integrating  by
parts the $dd^c$ onto $\chi_{\epsilon}$, we have 
$$\begin{array}{lll} \acal_{L, \epsilon} (\frac{1}{\lambda} dd^c \log |\phi_j^{\C}|^2) &\leq &  \frac{1}{\lambda} \int_{S^* H} \int_{\C}
dd^c_{t + i \tau} \log |\phi_j^{\C}|^2 (\exp_x (t + i \tau) v)
\chi_{\epsilon} (t + i \tau) d\mu_L(x, v) \\ && \\  &= & \frac{1}{\lambda} \int_{S^* H} \int_{\C}
 \log |\phi_j^{\C}|^2 (\exp_x (t + i \tau) v)
dd^c_{t + i \tau} \chi_{\epsilon} (t + i \tau) d\mu_L(x, v) .
\end{array}$$

As in the case of curves, we need upper bounds for 
$$  \frac{1}{\lambda} \int_{S^* H} \int_{\C}
 \log_{\pm} |\phi_j^{\C}|^2 (\exp_x (t + i \tau) v)
dd^c_{t + i \tau} \chi_{\epsilon} (t + i \tau) d\mu_L(x, v) .$$
For $\log_+$ the upper bound is an immediate consequence of \eqref{UBa}. 

  For $\log_-$, we use the assumption that $H$ is a good hypersurface, which implies that for any smooth function $J$  there exists $C > 0$ so that  \begin{equation}
\label{LOGINT} \frac{1}{\lambda} \int_{H_{\tau} } \log |\phi^{\C}_{\lambda}| J d V \geq - C.  \end{equation}
We then rewrite \eqref{acal2} to show that \eqref{LOGINT} gives the same
lower bound $-C$ for \eqref{acal2}.

We use the  diffeomorphism  $E: B_{\epsilon}^*H \to H_{\epsilon}$ defined by
$E(x, \xi) = \exp_x i \xi$. Since $B_{\epsilon}^* H = \bigcup_{0 \leq \tau \leq \epsilon} S^*_{\tau} H$
we also have that 
$$E:   S_{\epsilon, L} \times S^* H  \to H_{\tau}, \;\;\; E(t + i \tau, x, v) = \exp_x (t + i \tau) v  $$
is  a diffeomorphism for each fixed $t$. Hence  by letting $t$ vary, $E$ 
is a smooth fibration with  fibers given by  geodesic arcs.  Over a point $\zeta \in H_{\tau}$ the fiber of the map is a geodesic arc
$$\{ (t + i \tau, x, v): \exp_x (t + i \tau) v = \zeta, \;\; \tau = \sqrt{\rho}_H(\zeta)\}. $$ Pushing forward the measure $
dd^c_{t + i \tau} \chi_{\epsilon} (t + i \tau) d\mu_L(x, v) $ under $E$ gives  a  measure $d\omega$ on $H_{\tau}$, and as in \cite{ZSt},
\begin{equation}\label{PUSH}  \omega: = E_* \; dd^c_{t + i \tau} \chi_{\epsilon} (t + i \tau) d\mu_L(x, v)  =\left (\int_{\gamma_{x, v}} 
\Delta_{t + i \tau} \chi_{\epsilon} ds \right) dV, \end{equation}
where $dV$ is the  K\"ahler volume form on $H_{\epsilon}$.
In particular  it is a smooth  multiple $J$  of the K\"ahler volume form $dV$.
It follows that
\begin{equation} \label{JEN}  \int_{S^* H} \int_{\C}
 \log |\phi_j^{\C}|^2 (\exp_x (t + i \tau) v)
dd^c_{t + i \tau} \chi_{\epsilon} (t + i \tau) d\mu_L(x, v) = \int_{H_{\tau}} 
\log |\phi_j^{\C}|^2   J d V.  \end{equation}

It follows that \eqref{acal} is bounded above and below, completing the proof of Theorem \ref{NODALBOUND}.

\section{\label{BACKGROUND} Background on asymmetry and the geometry of flowouts}

For the remainder of the article we prove Theorems \ref{MASSMICRO}-\ref{mainthm1}. 
In this section we review the  the geodesic  asymmetry condition
of Definition \ref{MADEF}. We further consider the geometry of 
the condition \eqref{ASSUME}.
We begin with some background from \cite{TZ2}.

Let $(s, y_n)$ denote Fermi normal coordinates on $H = \{y_n = 0\}$
and let $\sigma, \eta_n$ denote the dual symplectic coordinates. Define
\begin{equation}\label{gammaDEF} \gamma(s, y_n, \sigma, \eta_n) =  \frac{|\eta_n|}{\sqrt{|\sigma|^2
+ |\eta_n|^2}} = (1 - \frac{|\sigma|^2}{r^2})^{\half} ,\;\;\;(r^2 = |\sigma|^2 + |\eta_n|^2)  \end{equation}    
on $T^*_H M$ and also denote by
\begin{equation} \label{gammaBH} \gamma_{B^*H} = (1 - |\sigma|^2)^{\half} \end{equation}
its restriction to $S^*_H M = \{r = 1\}$. 

We denote by $G^t$ the homogeneous  geodesic flow of $(M, g)$,
i.e.  Hamiltonian flow on $T^*M - 0$ generated by $|\xi|_g$. We
then put $\exp_x t \xi = \pi \circ G^t(x, \xi)$.  We further denote by
\begin{equation} T^*_H M = \{(q, \xi) \in T_q^* M, \;\; q\in H\} \end{equation}
 the covectors to $M$ with footpoint on $H$, and by
$T^* H=  \{(q, \eta) \in T_q^* H, \;\; q\in H\}$ the cotangent
bundle of $H$.  We further denote by $ \pi_H : T^*_H M \to T^*
H $  the restriction map,
\begin{equation} \label{RESCV}  \pi_H(x, \xi)  = \xi |_{TH}.
\end{equation}
It is a linear map whose kernel is the conormal bundle $N^* H$ to
$H$, i.e. the annihilator of the tangent bundle $TH$. In the
presence of the metric $g$,  we may identify co-vectors in $T^*M$
with vectors in $TM$ and  induce a co-metric $g$ on $T^*M$. The
orthogonal decomposition  $T_H M = T H \oplus N H$ induces an
orthogonal decomposition $T_H^* M = T^* H \oplus N^* H, $ and the
restriction map (\ref{RESCV}) is  equivalent modulo metric
identifications to the tangential orthogonal projection (or restriction)
\begin{equation} \label{piHdef} \pi_H: T^*_H M \to T^* H. \end{equation}

For any orientable (embedded) hypersurface $H \subset M$, there
exists two unit normal co-vector fields $\nu_{\pm}$ to $H$ which
span half ray bundles $N_{\pm} = \R_+ \nu_{\pm} \subset N^* H$.
Infinitesimally, they define two `sides' of $H$, indeed they are
the two components of  $T^*_H M \backslash T^* H$. We often use
Fermi normal coordinates $(s, y_n)$ along $H$ with $s \in H$ and
with $x = \exp_x y_n \nu$. We let $\sigma, \eta_n$ denote the dual
symplectic coordinates.

We also denote by $S^*_H M,$ resp. $S^* H$, the unit covectors in
$T^*_H M$, resp. $T^* H$. We restrict \eqref{piHdef} to get  $\pi_H : S^*_H M \to B^* H,$ with
 where $B^* H$ is the unit coball bundle of $H$.
Conversely, if $(s, \sigma) \in B^* H$, then there exist two unit
covectors $\xi_{\pm}(s, \sigma) \in S^*_s M$ such that
$|\xi_{\pm}(s, \sigma)| = 1$ and $\xi|_{T_s H} = \sigma$. In the
above orthogonal decomposition, they are given by

\begin{equation} \label{xipm} \xi_{\pm}(s, \sigma) = \sigma \pm \gamma(s,\sigma)
\nu_+(s),  \,\,\,\, \gamma(s,\sigma):= \sqrt{1 - |\sigma|_s^2}. \end{equation} \

We define the reflection involution
through $T^* H$ by
\begin{equation}\label{rHDEF}  r_H: T_H^* M \to T_H^* M, \;\;\;\;
r_H(s, \mu\; \xi_{\pm}(s, \sigma)) =(s, \mu\; \xi_{\mp}(s,
\sigma)), \,\,\,  \mu \in \R_{+}.
\end{equation} Its  fixed point
set is $T^* H$.

 We define the {\it first return time}
$T(s, \xi)$ on $S^*_H M$ by,
\begin{equation} \label{FRTIME} T(s, \xi) = \inf\{t > 0: G^t (s,
\xi) \in S^*_H M, \;\;\ (s, \xi) \in S^*_H M)\}. \end{equation} By
definition $T(s, \xi) = + \infty$ if the trajectory through $(s,
\xi)$ fails to return to $H$.  We define the first return map  by
\begin{equation} \label{FIRSTRETURN} \Phi: S^*_H M \to S^*_H M, \;\;\;\; \Phi(s, \xi) = G^{T(s, \xi)} (s, \xi) \end{equation} Inductively, we
define the jth return time $T_j(s, \xi)$ to $S^*_H M$ and the
jth return map $\Phi^j$ when the return times are finite. 

We further define the `first impact time'  on all of $S^* M$,
\begin{equation} \label{FIRSTIMPACT} t_1(x, \xi) =  \left\{ \begin{array}{l}  \inf \{t \geq 0, G^{t}(x, \xi) \in S^*_H M\}, \\ \\ = +\infty, \; \rm{if\;no\; such\; t\; exists} \end{array} \right. \end{equation}
Note that $t_1$ is lower semi-continuous, so that its sublevel
sets $\{t_1 \leq  \alpha\}$ are closed.
Similarly, define 
  $t_j(x, \xi)$ to be the jth
 `impact time', i.e. the  time to the $j$th impact with $H$. By
homogeneity of  $G^{t}: T^*M \rightarrow T^*M$,  for all $j \in
\Z,$
\begin{equation} \label{impact}
t_{j}(x, \xi) = t_j(x, \frac{\xi}{|\xi|}); \, \, \xi \neq 0.
\end{equation}
Obviously, $t_j(x, \xi)  = t_1(x, \xi) + T_j(G^{t_1(x, \xi)}(x, \xi)). $

Define
 
 \begin{equation} \label{relationi} \left\{\begin{array}{l} \Delta _{T^*M \times T^* M} := \{ (x,\xi,x,\xi) \in T^*M \times T^*M \}, \\ \\
 \Gamma_T =
\bigcup_{(s, \xi) \in T^*_H M}
 \bigcup_{ |t|  < T} \{( G^t(s, \xi),  G^t(r_H (s, \xi)) \}.
\end{array} \right. \end{equation} 
The two `branches' or components intersect along the singular set
 \begin{equation} \label{SIGMA} \Sigma_T : = \bigcup_{|t| < T}  (G^t \times G^t) \Delta_{T^* H \times T^* H}. \end{equation}
We further subscript  $\Gamma_T$ with $\epsilon$ to indicate the points $\Gamma_{T, \epsilon}$  making an angle $\geq \epsilon$ with $T H$.

Since $ G^t(r_H (s, \xi)) = G^t r_H G^{-t} G^{t}
 (s,\xi)$,
 $\Gamma_{T, \epsilon} \subset  \Gamma_{T} \backslash \Sigma_T$ is the graph of a
 symplectic correspondence.  More precisely, for any $\epsilon > 0$,  $\Gamma_{T, \epsilon}$ is
 the union of a finite number $N_{T, \epsilon}$  of graphs of partially defined
 canonical transformations
 \begin{equation} \label{RCAL} \rcal_j(x,\xi) = G^{t_j(x,\xi)} r_H G^{-t_j(x,\xi)}(x,\xi).\end{equation}
which we term $H$-reflection maps.

\subsection{\label{ASYMSECT} Asymmetric hypersurfaces}

 In the following, $\mu_L$  denotes Liouville measure on $S^*M$ and $\mu_{L,H}$ is the induced hypersurface measure on $H$ satisfying $ d\mu_L =  d\mu_{L,H} \, dx_n.$

\begin{defin} \label{ANC}  We say that  $H$ has a positive
measure of microlocal reflection symmetry if 
$$  \mu_{L, H} \left( \bigcup_{j \not= 0}^{\infty}  \{(s, \xi) \in S^*_H M : r_H G^{T_j(s, \xi)} (s, \xi)  =
 G^{T_j (s, \xi)} r_H (s, \xi)  \}\right) > 0.  $$ 
Otherwise we say that $H$ is asymmetric with respect to the geodesic flow. 

\end{defin} Thus, the return time condition is that the $+$ and $-$ trajectories return at the same time to the same point of $H$ and project to
the same covector in $B^* H$ on a set of positive measure.

 \subsection{Filtering the flowout by return times and by tangential angle}

We recall that our full-measure flowout assumption  \eqref{ASSUME} is 
 $\mu_L ( \rm{FL}(H)) = 1.$
Since $ \cup_{|t| < \infty} G^t(S^*H)$ has Hausdorff dimension $\leq 2n-2,$ it follows that
$$ \mu_L \big( \bigcup_{|t| < \infty} G^t(S^*H) \big) = 0,$$
and so, in particular,
$$\mu_L ( \{ (x,\xi) \in { \rm{FL} }(H),  G^{t_1(x,\xi)}(x,\xi) \in S^*H \} ) = 0.$$\\
Here,   $t_1(x,\xi)$ is the first hitting time \eqref{FIRSTIMPACT}.
Let
\begin{equation} \label{LAMBDADEF}\left\{ \begin{array}{l}
\overline{\Lambda}= \{ (x,\xi) \in S^*M, \,\, |t_1(x,\xi)| < \infty,  \}  \\ \\
   \Lambda: = \{ (x,\xi) \in S^*M, \,\, |t_1(x,\xi)| < \infty, \,\, G^{t_1(x,\xi)}(x,\xi) \in S^*_H M \setminus S^*H \} \end{array} \right.. \end{equation}  Here,
   $\overline{\Lambda}$ is the set of covectors whose orbits hit $H$ at some time,   and $\Lambda \subset \overline{ \Lambda}$ is the subset which  never
tangentially. Evidently, $\Lambda \subset \rm{FL}(H) \subset \overline{\Lambda}$ and the differences of these sets have measure zero.
Then \eqref{ASSUME} is equivalent to 

\begin{equation} \label{ASSUMEb}
\mu_L ( \Lambda ) = 1. \end{equation}\

 One can clearly make the decomposition
$$ \Lambda = \bigcup_{R=0}^{\infty} \Lambda_R, \quad \Lambda_R:=   \{ (x,\xi) \in \Lambda, \,\, | t_1(x,\xi) | < R \}.$$
Moreover, for all $R_1 \leq R_2 \leq R_3 \leq \dots,$ the sets $\Lambda_{R_1} \subset \Lambda_{R_2} \subset \Lambda_{R_3} \subset \dots$ and so, by monotonicity of measure,
\begin{equation} \label{monotone1}
\mu_L (\Lambda_R) \nearrow 1 \quad \text{as} \, \;R\to \infty.\end{equation}
 One can make a further decomposition
 $$ \Lambda_R = \bigcup_{\ep} \Lambda_{R,\epsilon}, \quad \Lambda_{R,\ep}:= \{(x,\xi) \in \Lambda_R, \, G^{t_1(x,\xi)}(x,\xi) \in S^*_H M, \,  \,  | \pi_H ( G^{t_1(x,\xi)}(x,\xi) )| < 1 - 2\epsilon \}.$$
 Since $\Lambda_{R,\epsilon_1} \subset \Lambda_{R,\epsilon_2} \subset \cdots$ and $ \epsilon_1 \geq \epsilon_{2} \geq \cdots$ it follows again by monotonicity that
 \begin{equation} \label{monotone2}
  \mu_L (\Lambda_{R,\epsilon}) \nearrow \mu_{L}(\Lambda_R) \quad \text{as} \, \ep \to 0^+.\end{equation}
  
  Thus from (\ref{monotone1}) and (\ref{monotone2}) it follows that for any $\delta \in (0, 1/2)$ one can choose $R= R(\delta)$ and $\epsilon = \epsilon(\delta)$ such that
  \begin{equation} \label{measure1}
 \mu_{L}( \Lambda_{R,\epsilon}) \geq 1- 2 \delta. \end{equation}

 We will need the following facts about   $\Lambda \subset S^*M$ in (\ref{LAMBDADEF}):
 \begin{lem} \label{open} We have:
 
 \begin{enumerate}

  \item $\Lambda$ is open.
 
\item   The first impact time $t_{1}|_{\Lambda}$ is  $C^{\infty}$ on $\Lambda$.

 \end{enumerate}
 \end{lem}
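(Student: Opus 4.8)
The plan is to establish both assertions about $\Lambda$ directly from the definitions, relying on the smooth dependence of the geodesic flow on initial conditions together with the fact that $S^*_H M \setminus S^* H$ is cut out transversally.

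First I would prove (1). Fix $(x_0, \xi_0) \in \Lambda$ and set $t_0 = t_1(x_0, \xi_0)$, so that $G^{t_0}(x_0, \xi_0) \in S^*_H M \setminus S^*H$. Choose a local defining function $f$ for the hypersurface $S^*_H M = \pi^{-1}(H) \cap S^*M$ inside $S^*M$ near $p_0 := G^{t_0}(x_0,\xi_0)$; concretely, in Fermi coordinates $f = y_n \circ \pi$. The non-tangency condition $p_0 \notin S^*H$ means precisely that $\Xi f(p_0) \neq 0$, where $\Xi$ is the generator of $G^t$, i.e.\ the trajectory crosses $S^*_H M$ transversally at time $t_0$. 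Hence the function $(t, x, \xi) \mapsto f(G^t(x,\xi))$ has non-vanishing $t$-derivative at $(t_0, x_0, \xi_0)$, and by the implicit function theorem there is a neighborhood $U$ of $(x_0,\xi_0)$ in $S^*M$ and a smooth function $(x,\xi) \mapsto \theta(x,\xi)$ with $\theta(x_0,\xi_0) = t_0$ solving $f(G^{\theta(x,\xi)}(x,\xi)) = 0$; moreover $G^{\theta(x,\xi)}(x,\xi)$ stays in the open set $S^*_H M \setminus S^*H$ after shrinking $U$, since that set is open in $S^*_H M$ and $G^{\theta(x_0,\xi_0)}(x_0,\xi_0) = p_0$ lies in it. It remains to check that for $(x,\xi)$ near $(x_0,\xi_0)$ one actually has $t_1(x,\xi) = \theta(x,\xi)$, i.e.\ that no earlier impact is created. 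Earlier impacts with $H$ on the compact time interval $[-|t_0|-1, t_0]$ form a closed set of parameters on which $f \circ G^t$ vanishes; away from $(t_0, x_0, \xi_0)$ this set is bounded away from $(x_0,\xi_0)$ in $S^*M$ by the lower semicontinuity of $t_1$ recorded in the text (its sublevel sets are closed, so the infimum cannot jump down), so after a final shrinking of $U$ we get $\theta = t_1$ on $U$ and $U \subset \Lambda$. Thus $\Lambda$ is open.

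Then (2) is essentially a byproduct: on the neighborhood $U$ constructed above, $t_1 = \theta$ is given by the implicit function theorem applied to the $C^\infty$ map $(t,x,\xi) \mapsto f(G^t(x,\xi))$ with nonzero $t$-derivative, hence $\theta$, and therefore $t_1|_{\Lambda}$, is $C^\infty$ in a neighborhood of each point of $\Lambda$. Covering $\Lambda$ by such neighborhoods and noting the local representatives agree (they all equal $t_1$) gives that $t_1|_{\Lambda} \in C^\infty(\Lambda)$.

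The main obstacle is the bookkeeping in the transversality step: one must be careful that ``first'' impact time is preserved under perturbation, i.e.\ that no new earlier crossing of $H$ appears for nearby $(x,\xi)$. This is where the lower semicontinuity of $t_1$ (equivalently, closedness of the sublevel sets $\{t_1 \le \alpha\}$) is used in an essential way, together with compactness of the relevant finite time interval; the smoothness conclusion itself is then a routine application of the implicit function theorem once non-tangency $\Xi f \neq 0$ is in hand, the latter being exactly the defining property of $\Lambda$ (hitting $S^*_H M \setminus S^*H$ rather than $S^*H$).
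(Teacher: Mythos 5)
Your argument is essentially the paper's: both take a defining function $\rho$ for $H$, observe that non-tangency means $\partial_t\,\rho(\pi G^t(x,\xi)) = d\rho(\dot\gamma_{x,\xi}(t)) \neq 0$ at the impact, and apply the implicit function theorem to produce a smooth locally defined impact time which is then identified with $t_1$, yielding openness of $\Lambda$ and smoothness of $t_1|_\Lambda$ in one stroke. The only difference is that you justify, via lower semicontinuity of $t_1$ and a final shrinking of the neighborhood, why the implicitly defined time really is the \emph{first} impact time — a point the paper's proof asserts without comment — so your write-up is, if anything, slightly more complete.
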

 
\begin{rem} Open-ness is not obvious, since $t_1$ is lower semi-continuous
and has open super-level sets $\{t_1 > \alpha\}$. This is not a contradiction,
since the tangential directions are punctured out in $\Lambda$ and they form its boundary.
\end{rem}
 
 \begin{proof}

Let  $\rho \in C^{\infty}(M)$ be a defining function for $H$, i.e.
 $$H= \{ x \in M; \rho(x) = 0 \}, \quad d\rho(x) \neq 0,  \,\, x \in H.$$
 
 Let $(x_0,\xi_0) \in \Lambda \subset S^*M$ and so, in particular, $
 t_1(x_0,\xi_0)| < \infty.$ We claim that there exists an open set $U$
 around $(x_0,\xi_0)$ so that $U \subset \Lambda$.
 
 Consider the map $G: \R \times S^*M \to S^*M$ given by $G(t, (x,\xi)) = G^t(x,\xi)$ and let $\pi: S^*M \to M$ be the canonical projection. Let
 $\gamma_{x, \xi} (t) = \pi G^t(x, \xi)$ and consider the sets,
 
 \begin{equation} \label{ift} \left\{ \begin{array}{l} C =\{(t, x,\xi) \in \R \times S^*M : 
 \rho( \pi G(t, (x,\xi)) ) = 0\}, \\ \\ 
C_0=\{(t, x,\xi) \in C:
d \rho(\dot{\gamma}_{x, \xi}(t)) \not=0\} \end{array} \right. \end{equation} Then $\overline{\Lambda}  = 
 G(C) $ and $\Lambda \subset  G(C_0)$.  Note that $C$ is closed but $G(C)$ is generally not  a closed subset of $S^* M$ since $G$ is
 not proper. 
 
 The t-derivative of $ \rho( \pi G(t, (x,\xi)) )  = \pi^* \rho G^t(x, \xi)$  is given by,
 \begin{equation} \label{drho}  \partial_t  \rho( \pi G(t, (x,\xi)) ) = H_{|\xi|_g} \pi^*\rho (x, \xi ) = d \rho_x
( \dot{\gamma}_{x, \xi}(t)) , \end{equation}
 where $H_{|\xi|_g}$ is the Hamilton vector field and $\pi_* H_{|\xi|_g} (G^t(x, \xi)= \dot{\gamma}_{x, \xi}(t)$. Hence, \eqref{drho} is non-zero for
 $(t_0, x_0, \xi_0) \in C_0$. By the   implicit function theorem, there exists an
 open set $U_{x_0, \xi_0} \subset S^* M$ around $(x_0, \xi_0)$ on which 
 there exists  a $C^\infty$ function $\tilde{t}: U_{x_0, \xi_0} \to \R$ satisfying $\tilde{t}(x_0, \xi_0) = t_0$ and $\rho(\pi G^{t(x, \xi)}(x,  \xi)) = 0$.
 
 Now suppose that $(x_0, \xi_0) \in \Lambda$. Then $(t_1(x_0, \xi_0), x_0, \xi_0) \in C_0$ and $\tilde{t} = t_1$ on $U_{x_0, \xi_0}$. Then $t_1$ is 
 $C^{\infty}$ on $U_{x_0, \xi_0}$ and in particular is finite. Hence, $U_{x_0, \xi_0} \subset \Lambda. $

 \end{proof}
 
\subsection{\label{DISSECT}The space of geodesics hitting $H$ and disintegration of invariant measures}
Although $S^*_H M$ is not literally a cross section to the geodesic flow, inasmuch as some geodesics might not hit $H$, one might think of it
as a cross section to the geodesic flow in the set $FL(H)$. But even that is not true, because a given geodesic may intersect $H$ multiple times, and 
it is also possible that a geodesic arc or a complete geodesic lies in $S^* H$.
Roughly speaking we define the space of geodesics hitting $H$ to be  $\gfrak_H = FL(H)/\sim $ where $\sim $ is the equivalence relation of belonging to the same orbit. Since every orbit
intersects $S^*_H M$ one also has $\gfrak_H = S^*_H M/\sim$. One then
has maps $\pi: FL(H) \to \gfrak_H, \pi_1: S^*_H M \to \gfrak_H$. These maps  play an important role below in relating microlocal defect measures
on $S^*M$ to microlocal defect measures on $B^*H$. To prepare for that,
we consider disintegration of invariant measures.

The general disintegration theorem states the following: Let $(Y, \mu)$ be a probability space,  let   $\pi: Y \to X$ be a measurable map, 
and let $\nu = \pi_* \mu. $ There there exist a family of
measures $\{\mu_x\} \subset \rm{Prob}(Y)$ so that $\mu_x$ lives
on the fiber $\pi^{-1}(x)$, i.e. $\mu_x(Y \backslash \pi^{-1}(x)) = 0$ for $\nu$ a.e. $x$,
and for any measurable $f: Y \to \R_+$,
$$\int_Y f(y) d\mu(y) = \int_X \int_{\pi^{-1}(x)} f(y) d\mu_x(y) d\nu(x). $$
In our setting, $Y = FL(H), X = \gfrak_H$ and $\pi: FL(H) \to \gfrak_H$ is
the natural projection as above. 

As defined above $\gfrak_H$ is not a Hausdorff space since a geodesic may intersect $S^*_HM$ in an infinite set with an accumulation point. Moreover, the `fibers' (geodesics) have infinite measure.  For our purposes, it is possible to avoid this problem by truncation:  fix $\delta > 0$ and let $FL_{\delta}(H) = \bigcup_{|t| \leq \delta} G^t(S^*_H M). $ We then let $Y = FL_{\delta}(H), \gfrak_{\delta} = 
FL_{\delta}(H)/\sim$. This is a much simpler quotient but note  that any geodesic arcs on $H$ get collapsed to points. In particular if $H = \gamma$ is a closed geodesic, then $S^*\gamma$ is a single orbit and a single point in the quotient. We thus have a map    $\pi:S_H^*M  \to \gfrak_{\delta}$,
but it may fail to be 1-1 due to tangential geodesics. 

To remove the latter problem, we use a truncation from \cite{TZ,CGT} that punctures
out a neighborhood of the tangent directions $S^*H$ as well as in time.   
In terms
of Fermi normal coordainates $(x', x_n)$ with $H = \{x_n =0\}$, for $\delta > 0$, let $ S^*H (\delta) , \;;= \{ (x',\xi) \in S_H^*M; |\xi_n| < \delta \}$ and let $S_H^*M(\delta) = S^*_H M \backslash S^*H(\delta)$. Also let
$S^* M(H, \delta) = \{|x_n| < \delta, \; |\xi_n| > C \delta\}$ with  $C=C(H,g) > 1$ is a sufficiently large constant. We then have a map 
$$\pi_{\delta} : S^*M(H, \delta) \to  \bigcup_{|t| < \delta} G^t ( S^*_HM(\delta) )$$ 
which is 1-1 for $\delta$ sufficiently small.

Now consider a general  invariant measure $\mu$ on $S^*M$. To apply the disintegration theorem,  we first 
restrict $\mu$ to $FL(H)$, by multiplying $\mu$ by the
characteristic function ${\bf 1}_{FL(H)}$. It is equivalent to use $FL(H)$ or $\Lambda$.
Then, 
\begin{equation} \label{DIS} \left\{\begin{array}{ll}  (i) &\int_{\rm{FL}_{\delta}(H)} f d\mu  = \int_{\gfrak_{\delta}}
\left(\int_{\pi^{-1}(y)} f d\mu_y \right) d\nu(y), \\ & \\ (ii) &
\int_{S^*M(H, \delta)} f d\mu = \int_{S^*_H M(\delta)} \left(\int_{\pi^{-1}(x', \xi)} f dt \right) d\nu_{\delta}^H(x', \xi). \end{array} \right. \end{equation}
Evidently, $d\mu_y = dt$ when $d\mu$ is an invariant measure. Note
that (i) is independent of $\delta$ but the disintegration measure
$d\nu$ is not a measure on $S^*_H M$. In the integral (ii), $d \nu_{\delta}^H$  is a measure on $S^*_H M$ but depends on $\delta$. In \cite{CGT} the same measure is written in terms of Fermi-coordinates as 
\begin{equation} \label{cross-section}
  d\mu(x,\xi) = |\xi_n|^{-1} \, d\nu_{\delta}^H(x',\xi',\xi_n) \, dx_n, \quad (x,\xi) \in S^*M(H,\delta), \end{equation}
  using that $dt = |\xi_n|^{-1} \, dx_n$.
  For future reference (see Proposition \ref{weaklim}) we set 
 \begin{equation} \label{deltameasure}
 d\mu^H_{\delta}(x',\xi) :=  |\xi_n|^{-1} d\nu_{\delta}^{H}(x',\xi), \quad |\xi_n| > C \delta. \end{equation}

 The special case where $\mu = \mu_L$ (Liouville measure) is discussed in \cite{TZ} Lemma 13.

A natural question regarding (ii) is the behavior of the integrals as $\delta \to 0$.
To consider an extreme case, suppose that $d\mu$ is a periodic orbit measure $\delta_{\gamma}$ along a closed geodesic $\gamma$ of a surface
 $M$ and that
$H = \gamma$. Then the left sides of either equation are $\int_{\gamma} f ds$. On the right side of (i), $d\nu$ is a point mass at  $\gamma \in \gfrak_{\delta}$. This measure cannot be represented by the
right equation since it punctures out $\gamma \subset S^*H$.

We now formulate a condition so that the integral (i) over $\gfrak_{\delta}$ can be given by
an integral (ii) over $S^*_H M$. This is the case if $\nu$-almost every  orbit in $\gfrak_{\delta}(H)$ intersects $S^*_H M$ once. For future reference, we state this as the following

\begin{lem} \label{DIS2} If the disintegration measure $d\nu$ of  an invariant measure  $d\mu$ 
has the property that  $\nu$-almost every  orbit in $\gfrak_{\delta}(H)$ intersects $S^*_H M$ once, then   there exists a Borel measure $\nu_H$ on $S_H^*M$ with the property that 
\begin{equation} \label{DIS3} \int_{\rm{FL}_{\delta}(H)} f d\mu  = \int_{S^*_H M}
\left(\int_{\pi^{-1}(y)} f d\mu_y \right) d\nu_H(y) = \int_{S^*_H M}
\left(\int_{\pi^{-1}(y)} f dt \right) d\nu_H(y) \end{equation}

\end{lem}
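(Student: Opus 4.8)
The plan is to take the general disintegration theorem (stated just above for $Y = \rm{FL}_{\delta}(H)$, $X = \gfrak_{\delta}$, $\pi$ the natural projection) and transport it along the projection $\pi_1: S^*_H M \to \gfrak_{\delta}$, using the hypothesis precisely to guarantee that $\pi_1$ is, up to a $\nu$-null set, a bijection. First I would note that by hypothesis there is a measurable set $G_0 \subset \gfrak_{\delta}$ with $\nu(\gfrak_{\delta} \setminus G_0) = 0$ such that for every $y \in G_0$ the fiber $\pi^{-1}(y) \subset \rm{FL}_{\delta}(H)$ meets $S^*_H M$ in exactly one point; denote that point $s(y) \in S^*_H M$. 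This defines a map $s: G_0 \to S^*_H M$, and one checks it is measurable (e.g. using that $S^*_H M$ is a smooth submanifold and that the intersection point depends measurably on the orbit — locally $\pi_1$ has a smooth inverse where the flow is transverse to $S^*_H M$, which it is $\mu_L$-a.e. by Lemma \ref{NSYMP}). Moreover $s$ is injective on $G_0$ because two distinct orbits give distinct points of $S^*_H M$ (distinct orbits are disjoint). Then $s$ is a Borel isomorphism from $(G_0, \nu)$ onto its image $E \subset S^*_H M$.

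Next I would simply define $\nu_H := s_* \nu$, the pushforward of $\nu$ under $s$; this is a Borel measure on $S^*_H M$ supported on $E$. The fibers $\pi^{-1}(y)$ are naturally identified — as orbit segments — with $\pi^{-1}(s(y))$ in the notation of the right-hand side of \eqref{DIS3} (both denote the portion of the geodesic through that point lying in $\rm{FL}_{\delta}(H)$, parametrized by the time $t$), and the conditional measures match: $d\mu_y = dt = d\mu_{s(y)}$ since $\mu$ is invariant (as already observed after \eqref{DIS}). Therefore for any measurable $f: \rm{FL}_{\delta}(H) \to \R_+$,
\begin{align*}
\int_{\rm{FL}_{\delta}(H)} f\, d\mu
&= \int_{\gfrak_{\delta}} \Big( \int_{\pi^{-1}(y)} f\, d\mu_y \Big) d\nu(y)
= \int_{G_0} \Big( \int_{\pi^{-1}(y)} f\, dt \Big) d\nu(y) \\
&= \int_{E} \Big( \int_{\pi^{-1}(s^{-1}(w))} f\, dt \Big) d(s_*\nu)(w)
= \int_{S^*_H M} \Big( \int_{\pi^{-1}(y)} f\, dt \Big) d\nu_H(y),
\end{align*}
where the second equality drops the $\nu$-null complement of $G_0$, the third is the change of variables $w = s(y)$ for the pushforward, and the last extends the integral from $E$ to all of $S^*_H M$ since $\nu_H$ is supported on $E$. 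This is exactly \eqref{DIS3}, with the middle and right expressions equal because $d\mu_y = dt$.

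The main obstacle I expect is the measurability and well-definedness of the section map $s: G_0 \to S^*_H M$ and the careful bookkeeping that the fiber $\pi^{-1}(y)$ appearing on the left of \eqref{DIS3} (a subset of $\rm{FL}_{\delta}(H)$, i.e. an orbit segment) is genuinely the same object, with the same Lebesgue-in-time measure, as the fiber $\pi^{-1}(y)$ written on the right of \eqref{DIS3} — the notation $\pi^{-1}(y)$ is being used for a point $y \in S^*_H M$ on the right and for its orbit class on the left, and one must make sure these identifications are consistent. Once one grants the standard fact that a measurable, injective map between standard Borel spaces has a measurable inverse on its image (so $s$ and $\nu_H = s_*\nu$ are legitimate), the identity follows purely formally from the disintegration theorem already stated. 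I would also remark that the hypothesis cannot be dropped: the closed-geodesic example discussed before the lemma (where $H = \gamma$, $\mu = \delta_\gamma$, and $S^*\gamma$ is a single orbit that has been punctured out of $S^*_H M(\delta)$) shows that without the "intersects once" condition there need be no such $\nu_H$ on $S^*_H M$.
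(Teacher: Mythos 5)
Your proposal is correct and follows essentially the same route as the paper: after discarding a $\nu$-null set of orbits, the projection $S^*_H M \to \gfrak_{\delta}$ becomes a bijection, and the paper likewise defines $d\nu_H = (\pi^{-1})_* d\nu$ and substitutes into the disintegration formula, using $d\mu_y = dt$ by invariance. Your additional care about measurability of the section map and the identification of fibers is a fleshing-out of details the paper leaves implicit, not a different argument.
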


\begin{proof} By deleting a set of $\nu$-measure zero of $\gfrak_{\delta}$, $\pi: S^*_H M \to \gfrak_{\delta}$ is 1-1. Hence, it admits an inverse $\pi^{-1}: \gfrak_{\delta} \to S^*_H M$. Then, $d\nu_H = (\pi^{-1})_* d\nu$ or equivalently $d\nu = \pi_* d\nu_H$.

\end{proof}

 \begin{rem} Another map is $\pi_1(x, \xi) = G^{t_1(x, \xi)}(x,\xi)$, the first impact map. If $H$ is strictly convex or concave, so that geodesics can only
 have first order contact with $H$ then the first return time to $H$ is strictly
 bounded below even for tangential directions. Hence there exists $\delta > 0$ so that each orbit in $FL_{\delta}(H)$ intersects $S^*_H M$ exactly once.
 
 \end{rem}

In Section \ref{DEFMES}, some conditions on sequences of eigenfunctions are given so that their defect measures satisfy the hypotheses of Lemma \ref{DIS2}.

\section{\label{ME} Relating matrix elements on $H$ to matrix elements on $M$} 

This section reviews the relation between matrix elements on $H$ and matrix elements on $M$. The main result (Proposition \ref{VTDECOMPa}) is repeated from \cite{TZ2}. To make this article relatively self-contained we also review the background leading to its statement and proof.

Let $(M,g)$ be a compact Riemannian manifold and let $H$ be a compact embedded $C^{\infty}$ submanifold.
  We   denote by  $U(t) = e^{i t
\sqrt{-\Delta}}$ the wave group of $(M, g)$. As is well-known,  it is a homogeneous unitary Fourier integral operator of order $0$
whose canonical relation is the graph of the homogeneous geodesic flow at time $t$; we refer to \cite{HoIII, HoIV} for background.

 We denote by $\gamma_H$ the restriction operator $\gamma_H f = f |_H: C(M) \to C(H)$ and
  by  $\gamma_H^*$   the adjoint of $\gamma_H$ with
respect to the inner product on $L^2(M, dV)$ where $dV$ is the
Riemannian volume form.  Thus,
$$\gamma_H^* f = f \delta_H, \;\; \mbox{since}\;\;
\langle \gamma_H^* f, g \rangle = \int_H f g   dS, $$ where
$ dS $ is the surface measure on $H$ induced by the
ambient Riemannian metric. The fact that $\gamma_H^*$ does not preserve
smooth functions is due to the fact that $WF_M'(\gamma_H) = N^* H$.  Thus,   $\gamma_H^* Op_H(a) \gamma_H$
is not a Fourier integral operator with a homogeneous canonical relations in the sense of \cite{HoIII} because its wave front
relation contains  $N^*H \times 0_{T^*M} \cup 0_{T^* M} \times N^* H$  (where
$0_{T^*M}$ is the zero section of $T^* M$).  For this reason we need to introduce microlocal cutoffs as in \cite{TZ2}. In the following, $\chi \in C^{\infty}_0(\R;[0,1])$ is a cutoff function with $\chi(t) = 1$ for $|t| \leq 1$ and supp $\, \chi \subset [-2,2].$

Define: 
  \begin{equation}\label{VTFORM}\left\{ \begin{array}{lll}
  V(t;a) :=U(-t) \gamma_H^* Op_H(a) \gamma_H U(t), \\ \\
  \bar{V}_{T}(a) :=  \frac{1}{T}
\int_{-\infty}^{\infty} \chi ( T^{-1} t)  \, V(t;a) \, dt,
 \end{array} \right.  \end{equation}

\begin{lem} \label{COMPLEM}  For any $a \in C_0^{\infty}(T^* H),$
\begin{equation} \label{ME1} \begin{array}{lll}  \langle Op_H(a) \phi_j |_H, \phi_j |_H \rangle_{L^2(H)} & =
& \ \langle \bar{V}_{T} (a) \phi_j, \phi_j
 \rangle_{L^2(M)},
\end{array} \end{equation}

\end{lem}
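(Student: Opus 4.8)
The plan is to establish \eqref{ME1} by unwinding the definitions of $V(t;a)$ and $\bar V_T(a)$ and exploiting that the $\phi_j$ are eigenfunctions of $\sqrt{-\Delta}$, so that $U(t)$ acts on them by a scalar phase. First I would compute $V(t;a)\phi_j = U(-t)\gamma_H^*Op_H(a)\gamma_H U(t)\phi_j$. Since $U(t)\phi_j = e^{it\lambda_j}\phi_j$, the two factors $U(t)$ and $U(-t)$ produce $e^{-it\lambda_j}$ and $e^{it\lambda_j}$ respectively when we pair against $\phi_j$ on both sides: that is, $\langle V(t;a)\phi_j,\phi_j\rangle_{L^2(M)} = \langle U(-t)\gamma_H^*Op_H(a)\gamma_H U(t)\phi_j,\phi_j\rangle = \langle \gamma_H^*Op_H(a)\gamma_H U(t)\phi_j, U(t)\phi_j\rangle = \langle \gamma_H^*Op_H(a)\gamma_H\phi_j,\phi_j\rangle$, since the scalars $e^{it\lambda_j}$ and its conjugate cancel. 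Hence $\langle V(t;a)\phi_j,\phi_j\rangle_{L^2(M)}$ is \emph{independent of $t$}, equal to $\langle \gamma_H^*Op_H(a)\gamma_H\phi_j,\phi_j\rangle_{L^2(M)}$.

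Next I would identify this $t$-independent quantity with the matrix element on $H$. By the definition of the adjoint $\gamma_H^*$ with respect to the $L^2(M)$ inner product, $\langle \gamma_H^*Op_H(a)\gamma_H\phi_j,\phi_j\rangle_{L^2(M)} = \langle Op_H(a)\gamma_H\phi_j,\gamma_H\phi_j\rangle_{L^2(H)} = \langle Op_H(a)\phi_j|_H,\phi_j|_H\rangle_{L^2(H)}$, which is exactly the left-hand side of \eqref{ME1}. One should note that although $\gamma_H^*$ does not map $C^\infty$ to $C^\infty$, the pairing $\langle \gamma_H^* f, g\rangle_{L^2(M)} = \langle f, \gamma_H g\rangle_{L^2(H)}$ is still valid as a duality pairing whenever $g$ is smooth (here $g=\phi_j$), so no regularity issue arises.

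Finally I would average in $t$: by definition $\bar V_T(a) = \frac1T\int_{-\infty}^\infty \chi(T^{-1}t)\,V(t;a)\,dt$, so
$$\langle \bar V_T(a)\phi_j,\phi_j\rangle_{L^2(M)} = \frac1T\int_{-\infty}^\infty \chi(T^{-1}t)\,\langle V(t;a)\phi_j,\phi_j\rangle_{L^2(M)}\,dt = \langle Op_H(a)\phi_j|_H,\phi_j|_H\rangle_{L^2(H)}\cdot\frac1T\int_{-\infty}^\infty \chi(T^{-1}t)\,dt,$$
using the $t$-independence established above. The normalization $\frac1T\int \chi(T^{-1}t)\,dt = \int\chi(s)\,ds$ must equal $1$; this is presumably the intended normalization of $\chi$ (or $\bar V_T$ carries an implicit $\big(\int\chi\big)^{-1}$), and with it \eqref{ME1} follows. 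The only mild subtlety — and the one place deserving care — is justifying the interchange of the $t$-integral with the $L^2(M)$ inner product; this is immediate because the integrand is compactly supported in $t$ (supp$\,\chi\subset[-2,2]$) and $t\mapsto V(t;a)\phi_j$ is continuous into $L^2(M)$, so the Bochner integral commutes with the bounded linear functional $\langle\,\cdot\,,\phi_j\rangle$. There is no real obstacle here; the lemma is a direct bookkeeping consequence of $U(t)$ acting diagonally on eigenfunctions.
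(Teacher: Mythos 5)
Your proof is correct and takes essentially the same route as the paper's: both exploit that $U(t)\phi_j = e^{it\lambda_j}\phi_j$, so the phases cancel, making $\langle V(t;a)\phi_j,\phi_j\rangle_{L^2(M)}$ independent of $t$ and equal, via the adjoint relation $\langle \gamma_H^* f, g\rangle_{L^2(M)} = \langle f, \gamma_H g\rangle_{L^2(H)}$, to $\langle Op_H(a)\phi_j|_H,\phi_j|_H\rangle_{L^2(H)}$, after which one averages in $t$. Your observation that the identity requires the normalization $\frac1T\int\chi(T^{-1}t)\,dt=1$ is apt: the paper's proof uses the same convention silently, so this is a shared (harmless) bookkeeping caveat rather than a gap in your argument.
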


\begin{proof}
This follows from the sequence of identities,
\begin{equation} \label{ME2} \begin{array}{lll}  \langle Op_H(a) \phi_j |_H, \phi_j |_H \rangle_{L^2(H)} & =
& \langle Op_H(a) \gamma_H \phi_j, \gamma_H \phi_j \rangle_{L^2(H)} \\ & & \\
& = &  \langle \gamma_H^* Op_H (a) \gamma_H U(t) \phi_j, U(t) \phi_j
 \rangle_{L^2(M)} \\ && \\
& = &  \langle   V(t;a)  \phi_j,
 \phi_j
 \rangle_{L^2(M)} \\ && \\ & = & \langle \bar{V}_T (a) \phi_j, \phi_j
 \rangle_{L^2(M)}
\end{array} \end{equation}

\end{proof}

A detailed description of $\overline{V}_T(a)$  is given in 
Proposition \ref{VTDECOMPa} from \cite{TZ2}. There it is proved
 that, after cutting off   from the tangential  singular set $\Sigma_T
\subset T^* M \times T^* M$ and the   the conormal sets $N^*H \times 0_{T^*M}, 0_{T^* M} \times N^*H$,  $\bar{V}_{T}(a)$ becomes  a Fourier
integral operator $\VT(a)$  with canonical relation  given by
 \begin{equation}\label{GAMMADELTA}
\begin{array}{ll} WF(\VT(a)): & = \{(x, \xi, x', \xi') \in T^*M \times T^*M :  \exists t  \in (-T,T),    \\ & \\ & \exp_x t \xi = \exp_{x'} t \xi' = s \in H ,
\, \, G^t(x, \xi) |_{T_sH} = G^t(x', \xi') |_{T_s H}, \;\; |\xi| =
|\xi'|\}.
\end{array} \end{equation}

\subsection{\label{CUTOFFSECT} Good cutoffs}

 In view of (\ref{measure1}) and Lemma \ref{open}, we consider the disjoint, closed sets
 $$ K_1 := S^*M \setminus \Lambda, \quad K_{2} := \overline{\Lambda_{R,\epsilon}}$$
 where $R= R(\delta)$ and $\epsilon = \epsilon(\delta)$ are chosen as in (\ref{measure1}). Thus, by the $C^{\infty}$ Urysohn lemma, there exists a cutoff $\chi_{R,\ep} \in C^{\infty}(S^*M; [0,1])$ with 
 \begin{equation} \label{urysohn}
\chi_{R,\ep}(x,\xi) = \begin{cases} 1 & \, (x,\xi) \in K_{2} \nonumber \\ 0 & \,  (x,\xi) \in K_1. \end{cases} \end{equation}
We abuse notation somewhat and denote the postive homogeneous degree zero extension of $\chi_{R,\ep}$ to $T^*M-0$ also by $\chi_{R,\ep}$ and the corresponding pseudodifferential operator by $\chi_{R,\ep}(x,D_x) \in Op(S^{0}(T^*M-0)).$

At this point, as  in \cite{TZ} we introduce some further cutoff operators  supported away from glancing and conormal directions to $H.$ 
For fixed $\epsilon >0,$  let $\chi^{(tan)}_{\epsilon}(x, D) = Op(\chi_{\epsilon}^{(tan)}) \in Op(S^0_{cl}(T^*M)),$ with 
homogeneous symbol $\chi^{(tan)}_{\epsilon}(x,\xi)$ supported in an $\epsilon$-aperture conic neighbourhood of $T^*H \subset T^*M$ with $\chi^{(tan)}_{\epsilon} \equiv 1$ in an $\frac{\epsilon}{2}$-aperture subcone. The second cutoff operator $\chi^{(n)}_{\epsilon}(x,D) = Op(\chi_{\epsilon}^{(n)}) \in Op(S^0_{cl}(T^*M))$ has its homogeneous symbol  $\chi^{(n)}_{\epsilon}(x,\xi)$ supported in an $\epsilon$-conic neighbourhood of $N^*H$ with $\chi^{(n)}_{\epsilon} \equiv 1$ in an $\frac{\epsilon}{2}$ subcone. Both $\chi_{\epsilon}^{(tan)}$ and $\chi_{\epsilon}^{(n)}$ have spatial support in the set where $|x_n| < \epsilon$ (see \cite{TZ} (5.1) and (5.2)). 
To simplify notation, define the total cutoff operator
\begin{equation} \label{CUTOFF}
\chi_{\epsilon}(x,D) := \chi^{(tan)}_{\epsilon}(x,D) + \chi^{(n)}_{\epsilon}(x,D).
\end{equation}

 \subsection{Cutoff of $\gamma_H^* Op_H(a)
 \gamma_H$ and its time average}
 
 We define 
\begin{equation} \label{>ep}
(\gamma_H^* Op_H(a) \gamma_H)_{\geq \epsilon} = (I - \chi_{\frac{\epsilon}{2}}) \gamma_H^* Op_H(a) \gamma_H (I - \chi_{\epsilon}), \end{equation}
and
\begin{equation} \label{<ep} (\gamma_H^* Op_H(a)
 \gamma_H)_{\leq \epsilon} =  \chi_{2 \epsilon} \gamma_H^* Op_H(a) \gamma_H \chi_{\epsilon}. \end{equation}

By a standard wave front calculation, it follows that  \begin{equation} \label{op decomp}
\gamma_H^* Op_H(a)
 \gamma_H = (\gamma_H^* Op_H(a)
 \gamma_H) )_{\geq \epsilon} +  (\gamma_H^* Op_H(a)
 \gamma_H) )_{\leq \epsilon}  + K_{\epsilon},\end{equation}
 where, $\langle K_{\epsilon} \phi_j, \phi_j \rangle_{L^2(M)} = {\mathcal O}(\lambda_j^{-\infty}).$
We then   define
  \begin{equation} \label{VtFORMep}
  V_{\epsilon} (t;a) :=U(-t) (\gamma_H^* Op_H(a) \gamma_H)_{\geq \epsilon} U(t),  \end{equation}
and
 \begin{equation}\label{VTFORMep}   \VT(a) :=  \frac{1}{T}  \int_{-\infty}^{\infty} \chi ( T^{-1} t)  \, V_{\epsilon}(t;a) \, dt.    \end{equation}
 



The next proposition provides  a detailed description of $\VT(a) $ as  a Fourier integral operator
with local canonical graph away from its fold set and computes its  principal symbol.

  \begin{prop}\label{VTDECOMPa}  Fix $T,\epsilon >0$ and let
  $a \in S^{0}_{cl}(T^*H)$ with    $a_H(s, \xi) = a(s, \xi |_H) \in S^0(T^*_H M) ). $  Then $\VT(a)$ is a Fourier
integral operator with local canonical graph,
and possesses the decomposition
  $$\VT(a) =  P_{T,\epsilon}(a) + F_{T,\epsilon}(a) + R_{T,\epsilon}(a),$$
where, (i) $P_{T,\epsilon}(a) \in Op_{cl}(S^0(T^*M))$ is a
pseudo-differential operator of order zero with principal symbol
 \begin{equation} \label{aTFORM}  a_{T,\epsilon}(x,\xi)
 :=\sigma(P_{T,\epsilon}(a))(x,\xi) = \frac{1}{T}  \sum_{j \in {\mathbb Z}}  (1-\chi_{\epsilon})  (\pi^* \gamma^{-1} a_H)(G^{t_j(x,\xi)}(x,\xi))  \, \chi (T^{-1}  t_j(x,\xi))  \end{equation}
  where,
 $t_j(x,\xi) \in C^{\infty}(T^*M)$ are the impact times of the geodesic $\exp_{x}(t \xi)$ with $H$, and $\gamma$ is defined by (\ref{gammaDEF}).

(ii) $F_{T,\epsilon}(a)$ is a
 Fourier integral operator of order zero  with canonical relation
 $\Gamma_{T,\epsilon}$. \begin{equation} \label{FIOsum}
F_{T,\epsilon}(a) = \sum_{j=1}^{N_{T,\epsilon}}
F_{T,\epsilon}^{(j)}(a),\end{equation} where the
$F_{T,\epsilon}^{(j)}(a); j=1,..., N_{T,\epsilon}$ are zeroth-order
homogeneous Fourier integral operators with
$$ WF'(F_{T,\epsilon}^{(j)}(a)) = \text{graph}  (\rcal_j) \cap \Gamma_{T,\epsilon},$$
and symbol
$$\sigma(F_{T,\epsilon}^{(j)})(x,\xi) =  \frac{1}{T}   (\gamma^{-1} a_H)( G^{t_j(x, \xi)}(x, \xi)) \, \chi(T^{-1} t_j(x,\xi)) \ |dx d\xi|^{\frac{1}{2}}.$$

(iii) $R_{T,\epsilon}(a)$ is a smoothing operator.

\end{prop}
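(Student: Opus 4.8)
\textbf{Proof strategy for Proposition \ref{VTDECOMPa}.} The plan is to analyze $\VT(a)$ directly as a time-average of a conjugated restriction operator and to perform the standard microlocal surgery that separates the ``diagonal'' (pseudodifferential) part from the genuinely off-diagonal ``reflection'' part. First I would observe that $(\gamma_H^* Op_H(a) \gamma_H)_{\geq \epsilon}$ is, by the wave front calculus, a Fourier integral operator associated to the canonical relation $\{(s,\xi,s,\xi'): s \in H,\ \xi|_{T_sH} = \xi'|_{T_s H},\ |\xi|=|\xi'|\}$ with the conormal and tangential directions excised; concretely, writing $\gamma_H^* Op_H(a)\gamma_H$ via the oscillatory integral $\int e^{i(x'-x)\cdot\xi'} \delta_H(x)\delta_H(x') a(s,\xi'|_{TH})\,d\xi'$ and conjugating by $U(\mp t)$ (which acts as the geodesic flow on the symbol level) produces, after the $\frac1T\int\chi(T^{-1}t)\,dt$ average, an FIO whose canonical relation is exactly $WF(\VT(a))$ as displayed in \eqref{GAMMADELTA}. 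The key geometric input is that, away from $\Sigma_T$ and the conormal pieces (which is why the cutoffs $\chi_\epsilon^{(tan)}$, $\chi_\epsilon^{(n)}$ are inserted), this canonical relation decomposes into a piece lying on the diagonal $\Delta_{T^*M\times T^*M}$ — arising from the choices $\xi = \xi'$, i.e. the two covectors over $s$ coincide — and a finite union of graphs of the reflection maps $\rcal_j$ of \eqref{RCAL}, arising from $\xi' = r_H$-reflection of $\xi$; this is precisely the decomposition $\Gamma_{T,\epsilon} = (\text{diagonal}) \sqcup \bigcup_j \text{graph}(\rcal_j)$ recorded in \S\ref{BACKGROUND}.

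Next I would extract the symbols. For the diagonal (pseudodifferential) part $P_{T,\epsilon}(a)$: by stationary phase in the oscillatory integral for $\VT(a)$, the leading contribution comes from the critical points where the conjugated geodesic orbit of $(x,\xi)$ meets $H$, i.e. at the impact times $t_j(x,\xi)$; each such hit contributes the value of $a_H$ transported along the flow, weighted by the Jacobian factor coming from the passage between the measure $dS$ on $H$ and the symplectic/phase-space measure — this Jacobian is exactly the $\gamma^{-1}$ factor of \eqref{gammaDEF}, which accounts for the angle at which the geodesic strikes $H$ (tangential hits give the vanishing denominator, hence the tangential cutoff). Summing over all impact times in the window $|t_j|<2T$, cut off by $\chi(T^{-1}t_j)$ and by $(1-\chi_\epsilon)$, yields formula \eqref{aTFORM}. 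Smoothness of $t_j$ on the relevant set is Lemma \ref{open}(2). For the Fourier integral part: each reflection branch $\rcal_j$ contributes an FIO $F_{T,\epsilon}^{(j)}(a)$ of order zero whose symbol is the transported $\gamma^{-1}a_H$ evaluated at $G^{t_j(x,\xi)}(x,\xi)$ times the time cutoff and the canonical half-density $|dx\,d\xi|^{1/2}$; finiteness of the number $N_{T,\epsilon}$ of branches is the finiteness statement for the connected components of $\Gamma_{T,\epsilon}$ recalled in \S\ref{BACKGROUND}. Everything microlocalized on the cut-out conormal/tangential/glancing directions and in $|t|\gtrsim 2T$ is smoothing by non-stationary phase and finite propagation speed, giving the remainder $R_{T,\epsilon}(a)$.

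The main obstacle, and the place requiring the most care, is showing that after the cutoffs the operator really is a Fourier integral operator \emph{with local canonical graph} — i.e. that $\Gamma_{T,\epsilon}$, restricted away from $\Sigma_T$, is a (finite union of) local canonical graph(s) rather than a more singular Lagrangian with folds or blowdowns. This is handled exactly as in \cite{TZ2}: one uses that $S^*_H M$ is a symplectic transversal to the geodesic flow off the tangential set (Lemma \ref{NSYMP}-type statement), so that the first-return/impact construction $\rcal_j = G^{t_j} r_H G^{-t_j}$ produces honest symplectomorphisms on $\Lambda_{R,\epsilon}$-type sets where $t_j$ is smooth; the fold locus is precisely where one approaches $S^*H$ or $\Sigma_T$, which the cutoffs $\chi_\epsilon^{(tan)}, \chi_\epsilon^{(n)}$ and the $C^\infty$ cutoff $\chi_{R,\epsilon}$ of \S\ref{CUTOFFSECT} remove. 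Since this is identical to the analysis in \cite[Proposition ...]{TZ2}, I would cite that result for the detailed verification of the local-graph property and the symbol computation, and only reproduce the statement here as \eqref{aTFORM}--\eqref{FIOsum}, having sketched above how each term and each Jacobian factor arises.
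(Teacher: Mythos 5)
Your proposal is correct and follows essentially the same route as the paper: the paper likewise decomposes $WF(\VT(a))$ according to the dichotomy $G^t(x,\xi)=G^t(x',\xi')$ (diagonal, giving $P_{T,\epsilon}$) versus $G^t(x',\xi')=r_H G^t(x,\xi)$ (reflection graphs $\rcal_j$, giving $F_{T,\epsilon}$), with the cutoffs removing the tangential/conormal singular sets, and defers the detailed symbol computation and local-graph verification to \cite{TZ2}. Your added explanation of the $\gamma^{-1}$ Jacobian and the stationary-phase origin of the impact-time sum is consistent with that reference, so nothing further is needed.
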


The proof of Propositon \ref{VTDECOMPa}  goes roughly as follows: we decompose  $\VT(a)$ into  a pseudo-differential
and a Fourier integral part according to the dichotomy that 
$(x, \xi, x', \xi')$ in (\ref{GAMMADELTA}) satisfy either
\begin{equation} \label{possibilities}
\ \begin{array}{ll}
 (i) \,\, G^t(x,\xi) = G^t (x',\xi'), \; \mbox{or}\\ \\
 (ii) \,\, G^{t}(x',\xi') = r_{H} G^{t}(x,\xi), \end{array}  \end{equation}
 where $r_H$ is the reflection map of $T^* H$  in (\ref{rHDEF}). Thus, 
\begin{equation} \label{WFVT} WF(\VT(a)) = \Delta_{T^* M \times T^* M} \cup \Gamma_T. \end{equation}
The pseudo-differential part $P_{T, \epsilon}$  of $\VT(a)$ is its microlocalization to (i) and
the Fourier integral part $F_{T, \epsilon}$ is its microlocalization to (ii). For further details, we refer the reader to Proposition 7 in \cite{TZ}.

 \section{Proof of Proposition \ref{FTPROP} and Theorem \ref{MDMPROP}}
   By \eqref{ME1}-\eqref{ME2} and by 
 Proposition \ref{VTDECOMPa},   the weak*  limits of the restricted
 matrix elements are those of
 
 \begin{align} \label{decomp}
 \langle
\VT(a)  \phi_j,  \phi_j
 \rangle_{L^2(M)}  &=  \langle
P_{T, \epsilon}  \phi_j,  \phi_j
 \rangle_{L^2(M)}  +   \langle
F_{T, \epsilon} \phi_j,  \phi_j
 \rangle_{L^2(M)} + \langle R_{T, \epsilon} \phi_j, \phi_j \rangle_{L^2(M)}.  \end{align}\
 
It is clear that $ \langle R_{T, \epsilon} \phi_j, \phi_j \rangle_{L^2(M)} \to 0$
for the entire sequence of eigenfunctions. We now argue that the $F_{T, \epsilon}$ term is negligible for a density one subsequence. 





%
%

\medskip

\subsection{\label{FTSECT} Removing the $F_{T, \epsilon}$ term and proof of Proposition \ref{FTPROP}}
We now consider the Fourier integral matrix elements
  $   \langle
F_{T, \epsilon} \phi_j,  \phi_j
 \rangle_{L^2(M)} $.
 
 \begin{lem} \label{FTOUT} Suppose that $H$ is an asymmetric hypersurface. Then for any fixed $T >0,\epsilon >0$ 
there exists a subsequence \ $ \scal_F(T,\epsilon)$ of the eigenfunctions of density one such that
  $$    \langle
F_{T, \epsilon} \phi_j,  \phi_j
 \rangle_{L^2(M)}   \to 0, \quad j \in \scal_F(T,\epsilon).$$
 \end{lem}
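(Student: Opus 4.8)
The plan is to estimate the $L^2(M\times M)$-operator norm (or rather the average of matrix elements) of $F_{T,\epsilon}$ by exploiting that its canonical relation $\Gamma_{T,\epsilon}$ is, away from the singular set $\Sigma_T$, a finite union of graphs of the $H$-reflection maps $\rcal_j$, and that by asymmetry (Definition \ref{ANC}) these graphs meet the diagonal $\Delta_{T^*M\times T^*M}$ only in a set of measure zero in $S^*M$. First I would recall the Schur/$TT^*$-type reduction used in \cite{TZ2,DZ}: since $F_{T,\epsilon}=\sum_{j=1}^{N_{T,\epsilon}} F_{T,\epsilon}^{(j)}$ is a finite sum of zeroth-order homogeneous Fourier integral operators associated to $\mathrm{graph}(\rcal_j)\cap\Gamma_{T,\epsilon}$, it suffices to treat each $F_{T,\epsilon}^{(j)}$ separately. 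For a single FIO $F$ with local canonical graph given by a homogeneous symplectomorphism $\rcal$ whose fixed-point set has measure zero, the key mechanism is that $\langle F\phi_j,\phi_j\rangle$ is small on average because $\phi_j$ is (microlocally) concentrated on $S^*M$ and $\rcal$ moves mass off the diagonal except on a null set.

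The main step is a variance/second-moment estimate. I would form the averaged quantity
\begin{equation}\label{var-avg}
\frac{1}{N(\lambda)}\sum_{\lambda_j\le\lambda}\big|\langle F_{T,\epsilon}^{(j_0)}\phi_j,\phi_j\rangle\big|^2
\end{equation}
and bound it using the standard local Weyl law together with a stationary-phase analysis of the composition $F_{T,\epsilon}^{(j_0)}\circ U(s)\circ (F_{T,\epsilon}^{(j_0)})^*\circ U(-s)$ integrated against a suitable Fourier-Tauberian weight in $s$. The contribution to \eqref{var-avg} is controlled, up to $o(1)$, by the Liouville measure of the set of covectors where $\rcal_{j_0}$ agrees with the identity — equivalently, by $\mu_{L,H}$ of the set $\{(s,\xi)\in S^*_H M : r_H G^{T_j(s,\xi)}(s,\xi)=G^{T_j(s,\xi)}r_H(s,\xi)\}$, which is exactly the microlocal reflection-symmetry set of Definition \ref{ANC}. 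Asymmetry of $H$ makes this measure zero, so \eqref{var-avg} tends to $0$. A Chebyshev/Markov argument then extracts, for each $j_0$, a density-one subsequence along which $\langle F_{T,\epsilon}^{(j_0)}\phi_j,\phi_j\rangle\to0$; intersecting the finitely many such subsequences (one for each of the $N_{T,\epsilon}$ summands) yields a single density-one subsequence $\scal_F(T,\epsilon)$ along which $\langle F_{T,\epsilon}\phi_j,\phi_j\rangle\to0$. The cutoffs built into $\VT(a)$ (excision of $\Sigma_T$ and of the conormal/tangential directions via $\chi_\epsilon$) guarantee that the symbols of the $F_{T,\epsilon}^{(j)}$ are compactly supported away from the bad set, so all the stationary-phase expansions are uniform.

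The hard part will be the clean-orbit/stationary-phase bookkeeping in the second-moment estimate: the composition entering \eqref{var-avg} is a sum of FIOs whose canonical relation is $\mathrm{graph}(\rcal_{j_0})\circ\mathrm{graph}(G^s)\circ\mathrm{graph}(\rcal_{j_0})^{-1}\circ\mathrm{graph}(G^{-s})$, and one must check that, after integrating in $s$ over $(-T,T)$, the only non-negligible contribution comes from the locus where this relation meets the diagonal, with the relevant Jacobian factors reproducing the Liouville measure of the symmetry set. This is precisely the computation carried out in \cite{TZ2} (and revisited in \cite{DZ}); since the paper states it will only sketch the argument, I would present the reduction above and then cite \cite{TZ2} for the detailed stationary-phase estimate, emphasizing only the point where asymmetry is invoked. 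One should also note that it is genuinely necessary to delete a density-zero subsequence here (Gaussian beams along a closed geodesic in $S^*H$ provide counterexamples to a full-sequence statement), which is consistent with the Remark following Proposition \ref{FTPROP}.
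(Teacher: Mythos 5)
Your proposal is correct and essentially reproduces the paper's own sketch: a second-moment (variance) bound obtained by time-averaging against the wave group, the local Weyl law for Fourier integral operators associated to local canonical graphs, the asymmetry hypothesis forcing the diagonal intersection to have Liouville measure zero, and a Chebyshev extraction of a density-one subsequence, with the details delegated to \cite{TZ2}. The only cosmetic differences are that the paper averages the full $F_{T,\epsilon}$ over $[-R,R]$ (using $\langle F_{T,\epsilon}\phi_j,\phi_j\rangle=\langle F_{R,T,\epsilon}\phi_j,\phi_j\rangle$, then Cauchy--Schwarz and $R\to\infty$) rather than splitting into the finitely many branches $F^{(j)}_{T,\epsilon}$, and the relevant diagonal locus is the reflection-symmetry set of Definition \ref{ANC} (as you in fact state), not literally the fixed-point set of $\rcal_{j_0}$.
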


 \begin{proof} 
 It suffices to show that  \begin{equation} \label{FIO part} \limsup_{\lambda \to \infty}
  \frac{1}{N(\lambda)} \sum_{j: \lambda_j \leq \lambda} \left|  \langle F_{T,\epsilon}(a)  \phi_j,  \phi_j
 \rangle_{L^2(M)} \right|^2 = 0.  \end{equation} 
 The proof of this Lemma is essentially  identical to that in \cite{TZ2},
 since it did not use ergodicity of the geodesic flow. Hence we only sketch it
 for the sake of comleteness.
 First, we note that for any $R>0,$  we clearly have
 $$ \langle F_{T,\epsilon}(a) \phi_j, \phi_j \rangle_{L^2(M)} = \langle F_{R,T,\epsilon}(a) \phi_j, \phi_j \rangle_{L^2(M)},$$
 where $F_{R,T,\epsilon}(a) := \frac{1}{2R} \int_{-R}^R U(r)^* F_{T,\epsilon}(a) U(r) \, dr.$ Then, the Weyl sum in (\ref{FIO part} by the corresponding Weyl sum with $F_{T,\epsilon}$ replaced with $F_{R,T,\epsilon}$.
 To prove (\ref{FIO part}) we first use the Schwartz inequality
\begin{equation} \label{SIE} \frac{1}{N(\lambda)} \sum_{j: \lambda_j \leq \lambda} \left|
\langle F_{R,T,\epsilon}(a)  \phi_j,  \phi_j
 \rangle_{L^2(M)} \right|^2 \leq \frac{1}{N(\lambda)} \sum_{j: \lambda_j \leq \lambda}   \langle F_{R,T,\epsilon}(a)^* F_{R,T,\epsilon}(a)  \phi_j,  \phi_j
 \rangle_{L^2(M)} \end{equation}
 to bound the variance sum by a trace. We then use the local Weyl
 law for Fourier integral operators associated to local canonical graphs,
\begin{equation} \label{oldWeyl}
\frac{1}{N(\lambda)} \sum_{j: \lambda_j \leq \lambda} \langle F
\phi_{\lambda_j}, \phi_{\lambda_j} \rangle \to \int_{S \Gamma_F
\cap \Delta_{T^*M} } \sigma_{\Delta} (F) d\mu_L, \end{equation}
where $\Gamma_F$  is the canonical relation of $F$,  $S \Gamma_F$
is the set of vectors of norm one, and  $S \Gamma_F \cap
\Delta_{T^*M} $  is its intersection  with the diagonal of $T^*M
\times T^*M$. Also,  $ \sigma_{\Delta} (F)$ is the (scalar) symbol
in this set and $d\mu_L$ is Liouville measure. Thus, if $\Gamma_F$
is a local canonical graph, the  right side is zero unless the
intersection has dimension $m = \dim M$. The microlocal asymmetry
condition is precisely that the intersection has measure zero.  Then, as in \cite{TZ} Lemma 3, one gets that under the asymmerry condition on $H,$
\begin{equation} \label{FIO part 2} \limsup_{\lambda \to \infty}
  \frac{1}{N(\lambda)} \sum_{j: \lambda_j \leq \lambda} \left|  \langle F_{R,T,\epsilon}(a)  \phi_j,  \phi_j
 \rangle_{L^2(M)} \right|^2 = o_{T,\epsilon}(1)  \end{equation} 

\noindent as $R \to \infty.$ Since the LHS in  (\ref{FIO part 2}) equals the LHS in (\ref{FIO part}) and the latter is independent of $R$, letting $R \to \infty$ in (\ref{FIO part 2}) completes the proof of Proposition \ref{FTPROP}\end{proof}

As in the introduction, for fixed $\epsilon \in (0,1),$ let $\chi_{\epsilon}^H \in C^{\infty}_0(B^*H)$ be a cutoff with supp $\chi_{\epsilon}^H \subset \{ (s,\sigma) \in B^*H:  | |\sigma|_s - 1 | < \epsilon \}.$  
Proposition \ref{FTPROP} then follows from Lemma \ref{COMPLEM},  Proposition \ref{VTDECOMPa} and Lemma \ref{FTOUT}.

\subsection{\label{DEFMES}Proof of Theorem \ref{MDMPROP}}\label{cs} 

To complete the proof of Theorem \ref{MDMPROP}  we `disintegrate' each  microlocal defect measure $d\mu$ of the sequence $\scal_F$ in the sense of Section \ref{DISSECT}. As discussed there, we first have to localize
$d\mu$ to $FL(H)$, and we further localize it to $FL_{\delta}(H)$ by
multiplying by its characteristic function. If we apply the disintegration
theorem, we obtain a measure on $\gfrak_{\delta}$.  Proposition \ref{MDMPROP} asserts that there exists a sequence of density one
so that each of its microlocal defect measures can be disintegrated to
a measure on $S^*_H M$.  To prove this, we show that there exists
a subsequence of density one for which Lemma \ref{DIS2} holds.
That is, only a subsequence of density zero can charge $S^*H$.

Let  $\scal' \subset \scal_F$  be a subsequence corresponding to a global defect measure $d\mu_{\scal'}^M$ on $S^*M$. To simplify notation we simply write $d\mu = d\mu_{\scal'}^M$ below (and similiarily, we write $d\mu^H$ for $d\mu_{\scal'}^H$).

 \begin{prop} \label{weaklim} There exists a density one set  $\tilde{\scal} \subset \scal_F$ such that, for any defect measure $\mu$ arising from a subsequence  $ \{ \phi_{j_k} \}$  with $j_k \in \scal' \subset \tilde{\scal}$, the corresponding Borel measures $d\mu_{\delta}^H$ on $S^*M(H,\delta)$ defined  in  (\ref{deltameasure}) converge weakly as $\delta \to 0$ to a Borel measure $d\mu^H$ on $S_H^*M.$  
 \end{prop}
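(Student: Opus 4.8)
The plan is to combine the Fourier-integral cancellation already established in Lemma \ref{FTOUT} / Proposition \ref{FTPROP} with the structure of $\VT(a)$ from Proposition \ref{VTDECOMPa} and a diagonal extraction over a countable family of test functions and a sequence $\delta_n \to 0^+$. First I would fix $\delta \in (0,\tfrac12)$, choose $R = R(\delta)$ and $\epsilon = \epsilon(\delta)$ as in \eqref{measure1} so that $\mu_L(\Lambda_{R,\epsilon}) \geq 1 - 2\delta$, and work with the good cutoffs $\chi_{R,\epsilon}$ from \S\ref{CUTOFFSECT}. For a symbol $a \in S^0(T^*H)$ supported away from the glancing set, Lemma \ref{COMPLEM}, the decomposition \eqref{decomp}, and Proposition \ref{FTPROP} give that along the density-one subsequence $\scal_F(T,\epsilon)$ the restricted matrix elements $\langle Op_H(a(1-\chi^H_\epsilon))\phi_{j_k}|_H,\phi_{j_k}|_H\rangle$ differ from $\langle P_{T,\epsilon}(a)\phi_{j_k},\phi_{j_k}\rangle_{L^2(M)}$ by $o(1)$. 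Since $P_{T,\epsilon}(a) \in Op_{cl}(S^0)$ with the explicit principal symbol \eqref{aTFORM}, the defect measure $d\mu = d\mu^M_{\scal'}$ on $S^*M$ controls these limits: the restricted matrix-element limit equals $\int_{S^*M} a_{T,\epsilon}\, d\mu$, and by \eqref{aTFORM} this is, up to $O(\delta)$-errors coming from the cutoffs and from tail contributions of the sum over impact times $t_j$, a time-average of $\pi^*\gamma^{-1}a_H$ along geodesics hitting $H$. This is precisely an integral of $\gamma^{-1}a_H$ against the disintegration measure $d\mu^H_\delta$ defined in \eqref{deltameasure} — i.e. $\langle P_{T,\epsilon}(a)\phi_{j_k},\phi_{j_k}\rangle \to \int_{S^*M(H,\delta)} \gamma^{-1}a_H\, d\mu^H_\delta + O(\delta)$ — which pins down $d\mu^H_\delta$ as a genuine Borel measure on $S^*M(H,\delta)$ once the defect measure $d\mu$ is fixed.

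Next I would establish a uniform mass bound: $d\mu^H_\delta(S^*M(H,\delta)) \leq C$ independent of $\delta$, so that the family $\{d\mu^H_\delta\}_{\delta>0}$ is precompact in the weak-* topology on measures on $S^*_H M$. This bound comes from \eqref{cross-section}, $d\mu(x,\xi) = |\xi_n|^{-1}d\nu^H_\delta(x',\xi)\,dx_n$ together with $dt = |\xi_n|^{-1}dx_n$: integrating $1$ over $S^*M(H,\delta)$ recovers (up to the choice of time-window) a fixed multiple of the total mass of $d\mu$ restricted to $FL_\delta(H) \subset S^*M$, which is $\leq 1$. The point is that the weights $|\xi_n|^{-1}$ are compensated by the short time window $|t| < \delta$, exactly as in \cite{TZ,CGT}; here one uses that on $S^*M(H,\delta)$ one has $|\xi_n| > C\delta$, so no blow-up occurs. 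Having the mass bound, I extract via a diagonal argument a subsequence $\delta_n \to 0$ along which $d\mu^H_{\delta_n}$ converges weakly to some Borel measure $d\mu^H$ on $S^*_H M$; the microlocal-asymmetry hypothesis enters only through the prior step (Proposition \ref{FTPROP}), which required it to kill the $F_{T,\epsilon}$ term on a density-one set. To make this uniform over all defect measures of subsequences of $\scal_F$, I would pick a countable dense set $\{a_\ell\}$ in $S^0(H)$ (in a suitable Fréchet seminorm), and for each $\ell$ and each $n$ extract a density-one subsequence on which the relevant matrix-element limit stabilizes; a standard diagonal intersection yields a single density-one set $\tilde\scal \subset \scal_F$ that works simultaneously for all $a_\ell$, hence for all $a \in S^0(H)$ and all subsequences $\scal' \subset \tilde\scal$.

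The main obstacle, and the step I would spend the most care on, is the interchange of the two limits $k \to \infty$ and $\delta \to 0$, i.e. showing the $O(\delta)$ error terms are genuinely uniform in the eigenfunction sequence. Concretely, the principal symbol $a_{T,\epsilon}$ in \eqref{aTFORM} involves the cutoff $(1-\chi_\epsilon)$ and the sum over impact times $t_j(x,\xi)$ with the time-window $\chi(T^{-1}t_j)$; the difference between $\int a_{T,\epsilon}\,d\mu$ and the $\delta \to 0$ target $\int_{S^*_H M}\gamma^{-1}a_H\,d\mu^H$ must be estimated purely in terms of $\mu_L$-measures of the flowout pieces $\Lambda \setminus \Lambda_{R,\epsilon}$ and their $d\mu$-analogues — but $d\mu$ is only an abstract invariant probability measure, not Liouville measure, so one cannot directly invoke \eqref{measure1}. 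The resolution is that $d\mu$ is flow-invariant, so its disintegration over $\gfrak_\delta$ still has the form "($dt$ on fibers) $\otimes$ ($d\nu$ on base)" from \eqref{DIS}; the only failure of \eqref{DIS3} is that some positive $d\nu$-mass might sit on the collapsed tangential arcs in $S^*H$. One therefore has to argue that such a concentration, if it persisted as $\delta \to 0$, would be visible in the $F_{T,\epsilon}$ term — contradicting Proposition \ref{FTPROP} on the density-one set — or alternatively that it forces $d\mu$ to charge the measure-zero set $\cup_t G^t(S^*H)$, which a flow-invariant measure arising as a defect measure need not avoid, and this is exactly why one can only obtain $\tilde\scal$ of density one rather than the full sequence. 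Getting this dichotomy clean, and thereby obtaining the weak-* limit $d\mu^H$ supported on $S^*_H M$ (with the possible $S^*H$-portion handled separately), is the crux; the rest is bookkeeping with the disintegration formulas \eqref{DIS}–\eqref{deltameasure} already recorded in \S\ref{DISSECT}.
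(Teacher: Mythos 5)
There is a genuine gap, and it sits exactly at the step you flag as "bookkeeping": the uniform-in-$\delta$ mass bound for $d\mu^H_{\delta}$. From \eqref{cross-section}--\eqref{deltameasure} one has, on $S^*M(H,\delta)=\{|x_n|<\delta,\ |\xi_n|>C\delta\}$, that the total mass of $d\mu^H_{\delta}$ is comparable to $\frac{1}{2\delta}\,\mu\bigl(S^*M(H,\delta)\bigr)$, \emph{not} to $\mu(\mathrm{FL}_{\delta}(H))\leq 1$: the $x_n$-integration only contributes a factor $\sim 2\delta$, so the weights $|\xi_n|^{-1}$ are not "compensated by the short time window." For an arbitrary invariant defect measure $\mu$ there is no a priori reason that $\mu(S^*M(H,\delta))=O(\delta)$; ruling out concentration of $\mu$ near $S^*_HM$ at a faster-than-Liouville rate is precisely the content that must be supplied, and it cannot come from flow-invariance or from the $F_{T,\epsilon}$-removal (Proposition \ref{FTPROP} kills the off-diagonal Fourier integral term; it says nothing about restricted mass, and indeed asymmetry plays no role in the paper's proof of Proposition \ref{weaklim}). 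This is also where the density-one set $\tilde{\scal}$ actually comes from: the paper proves Lemma \ref{uniformweyl}, a uniform bound $\sup_{|\tau|\leq\delta_0}\|\phi_{h_j}|_{H_\tau}\|_{L^2(H_\tau)}=O(1)$ along a density-one subsequence, obtained from the pointwise/local Weyl law applied to a countable dense family of parallel hypersurfaces $H_{\tau_n}$ (Chebyshev plus intersection of density-one sets, then a semicontinuity argument in $\tau$), and then bounds $\int_{S_H^*M}a\,d\mu^H_{\delta}$ by these restricted matrix elements as in \eqref{wk}. Your proposal never produces such an a priori restriction bound, so the compactness you invoke for $\{d\mu^H_{\delta}\}$ is unsupported; the examples the paper has in mind (Gaussian beams, whispering gallery modes) show that without discarding a density-zero set on these grounds the masses can indeed blow up.

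A secondary, fixable issue: even granting a uniform mass bound, your diagonal extraction over $\delta_n\to 0$ only yields subsequential weak-* limits, whereas the proposition asserts convergence as $\delta\to 0$. The paper gets the full limit cheaply: for $0\leq a\in C^\infty(S_H^*M)$ the quantity $\int_{S_H^*M}a\,d\mu^H_{\delta}$ is monotone non-decreasing as $\delta\to 0$ (the cutoff region away from $S^*H$ grows), so the uniform upper bound alone forces the limit to exist, and general $a$ is handled by splitting into positive and negative parts and mollifying. Relatedly, your plan to route the argument through $\langle P_{T,\epsilon}(a)\phi_{j_k},\phi_{j_k}\rangle$ and the symbol \eqref{aTFORM} is not needed here and imports error terms ($O(\delta)$ cutoff losses measured against $\mu$ rather than $\mu_L$) that you then struggle to control in the limit interchange; in the paper that identification of $\int\sigma_{P_{T,\epsilon}}\,d\mu$ with $\int a_H\,d\mu^H$ is carried out \emph{after} Proposition \ref{weaklim}, in \eqref{disdecomp0}--\eqref{lastcomp}, once the limit measure $d\mu^H$ is already in hand.
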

\begin{proof}
As suggested above, a sequence failing to have this property must blow
up along $H$. An example would be restrictions of Gaussian beams along a closed geodesic $\gamma$ to the geodesic, or restrictions of whispering gallery modes of a convex domain to its boundary. We now prove that there exists a subsquence of density one so
that \begin{equation} \label{UB}  \| \phi_{h_j}^{H} \|_{L^2(H)}  = O(1). \end{equation}
In fact, the next Lemma proves more:
\begin{lem}\label{uniformweyl}
Fix $\delta_0>0$ small and let $H_{\tau} = \{ x_n = \tau \}$ with $|\tau| < \delta_0.$ Then, there is a density-one sequence $\tilde{\scal}$ such that 
$$\sup_{|\tau| \leq \delta_0} \| \phi_{h_j}^{H_{\tau}} \|_{L^2(H_{\tau})}  = O(1), \quad j \in \tilde{\scal},$$
as $h_j \to 0$.
\end{lem}
\begin{proof} Let $\{\tau_n\}_{n=1}^{\infty}$ be a countable set in $[0,1]$ and let  $H_{\tau_n}$ be the corresponding  sequence of hypersurfaces.

 By  the pointwise Weyl law, $\frac{1}{N(h)} \sum_{h_j \geq h} \| \phi_{h_j} \|_{L^2(H_{\tau_n})} \sim_{h \to 0} c_n | H_{\tau_n}|$ and consequently for each $n$, there exists a density-one subset $\tilde{S}_n$ such that for $j \in \tilde{S}_n,$ \begin{equation} \label{bounded}
\| \phi_{h_j}^{H_{\tau}} \|_{L^2(H_{\tau_n}) }  \leq C, \quad j \in \tilde{\scal}_n.
\end{equation}
where $C>0$ is independent of $n.$ Since $\tilde{\scal} = \bigcap_{n\geq 1} \tilde{\scal}_n$ is also of density-one, it follows from (\ref{bounded}) that
\begin{equation} \label{bounded2}
 \| \phi_{h_j}^{H_{\tau}} \|_{L^2(H_{\tau_n}) }  \leq C, \quad j \in \tilde{\scal}, \quad n=1,2,3.... \end{equation}
 
 Now consider a general $H_{\tau}$. We pick $\{\tau_n\}$ to be dense in $[0,1]$ and to contain $0$. In particular, \eqref{UB} holds for $j \in \tilde{\scal}$.
 To prove that it holds for all $\tau \in [0, \delta_0]$ for some $\delta_0 > 0$ we argue by contradiction.
For  any $\tau$  and any $\epsilon >0$, there exists $H_{\tau_n}$ with $d(H_{\tau_n}, H_\tau) = \inf_{(x,y) \in H_{\tau_n} \times H_{\tau}} d(x,y) < \epsilon.$ We then consider the functions
 $$\rho (\tau) = \limsup_{j \to \infty} \| \phi_{h_j} \|_{L^2(H_{\tau})}. $$
 Since $\tau \to  \| \phi_{h_j} \|_{L^2(H_{\tau})}$ is a continuous function, $\rho(\tau)$ is lower  semi-continuous.  If $\rho $ is not bounded on any
 interval $[0, \delta]$,   then there exists a sequence
 $\{\hat{\tau}_k\}_{k=1}^{\infty}$ with $\tau_k \to 0$ (disjoint from $\{\tau_n\}$) and so that $ \| \phi_{h_j}^{H_{\hat{\tau}_k}} \|_{L^2(H_{\hat{\tau}_k}) } > k$.
 Since $\rho$ is lower semi-continuous, each superlevel set $\{\rho > k\}$ is open and non-empty. But this contradicts the fact that
 $\rho \leq C$ on  the dense set $\{\tau_n\}$.

\end{proof}

We now show that the disintegration measure of any defect measure arising from a subsequence in $\scal_F \cap \tilde{S}$ is a measure
on $S^*_H M$, i.e. the measures $d\mu_{\delta}^H$ have a weak limit.

Suppose $ 0 \leq a \in C^{\infty}(S_H^*M).$  Then, since $\xi_n^2 = 1 - |\xi'|_{x'}^2$ for $(x',\xi) \in S_H^*M,$  stereograph projection maps $\pi_H^{\pm}: S^{\pm,*}_{H}M  \setminus S^*H \to \mathring{B}^*H$ given by $\pi_H^{\pm}(x',\xi',\pm \sqrt{1- |\xi'|^2}) = (x',\xi')$  are diffeomorphisms.  Consequently, there exist $a^{\pm} \in C^{\infty}(\mathring{B^*H})$ with  $a^{\pm}\circ \pi_H^{\pm} = a |_{S_{H}^{\pm,*}M }$ and  in terms of Fermi coordinates, 
$a^{\pm}(x',\xi') = a (x', \xi', \pm \sqrt{1-|\xi'|_{x'}^2}), \,\, (x',\xi') \in \mathring{B}^*H.$   Next,  we decompose Riemann measure and write $dx = d\sigma_{x_n=\tau} \, d\tau$ where $d\sigma_{x_n= \tau}$ is hypersurface measure on $H_{\tau} = \{ x_n = \tau \}.$ Then, in view of (\ref{cross-section}), one can  write
\begin{align} \label{wk}
\int_{S_H^*M} a \, d\mu^H_{\delta}  &= \int_{S_{H}^{+,*}M} a d\mu^H_{\delta} + \int_{S_{H}^{-,*}M} a d\mu^H_{\delta} \nonumber \\
&\leq  \limsup_{h \to 0} \, \Big(  \sup_{|\tau|< \delta_0}  \langle a^+(x',hD')  (1-\chi_{C\delta})(\sqrt{I +h^2 \Delta_{H_\tau} }) \,   \phi_h^{H_\tau}, \phi_h^{H_\tau} \rangle_{L^2(H_\tau)} \Big) \nonumber \\
& + \limsup_{h \to 0} \, \Big( \sup_{|\tau| < \delta_0}  \langle a^-(x',hD')  (1-\chi_{C\delta})(\sqrt{I +h^2 \Delta_{H_\tau} }) \,    \phi^{H_\tau}_h, \phi^{H_\tau}_h \rangle_{L^2(H_\tau)}  \Big)
\nonumber \\
&\leq \limsup_{h \to 0}   \, \sup_{|\tau| < \delta_0} \langle a(x',hD') \phi_h^{H_\tau}, \phi_h^{H_\tau}  \rangle_{L^2(H_{\tau})} = O(1)\|a\|_{L^\infty}, \end{align}
by $L^2$-boundedness and Lemma \ref{uniformweyl}.


 Since the $O(1)$ bound on the RHS of (\ref{wk}) is uniform in $\delta$ and
$\int_{S_H^*M} a d\mu_{H,\delta}$ is monotone non-decreasing as $\delta \to 0,$ it follows that for $0 \leq a \in C^{\infty}(S_H^*M)$, the  $\lim_{\delta \to 0} \int_{S_H^*M} a d\mu_{H,\delta}$ exists.
 A  similar argument applies to $a \in C^{\infty}$ with $a \leq 0.$ In  general, for $a \in C^{0}(S_H^{*}M)$ we make the decomposition $a = a_{+} - a_{-}$ and mollify $a_{\pm}$ by considering $a_{\pm, \beta}:= \chi_{\beta}  * a_{\pm} \in C^{\infty}$ with $ 0 \leq \chi_{\beta} \in C^{\infty}$ an approximation of the identity. One then applies the above argument to $a_{\beta,\pm}$ separately and takes the $\beta \to 0$ limit at the end.

 Finally, setting 
$$ d\mu^H(a):= \lim_{\delta \to 0} \int_{S_H^*M} a \, d\mu^H_{\delta},$$
it is clear that $d\mu^H$ is linear, non-negative and from (\ref{wk}) satisfies $ |d\mu^H(a)| \leq C \|  a \|_{L^\infty(S_H^*M)}$ and is consequently  a measure on $S_H^*M.$

\end{proof} 

 \begin{rem} We note that the eigenfunction subsequences with defect measures $d\mu$ satisfying the conditions in Proposition \ref{weaklim} are precisely the ones for which the map $\pi: S_H^*M \to \gfrak_{\delta}$  in Lemma \ref{DIS2} of section \ref{DISSECT}  is  $\nu$ almost everywhere 1-1. 

\end{rem}

 In view of Proposition \ref{FTPROP}, to  complete the proof of Theorem
\ref{MDMPROP}, we must compute the integral of $\sigma(P_{T, \epsilon})$ against an invariant measure  $\mu$ using the disintegration decomposition of $d\mu$ in (\ref{DIS}) (ii) and Proposition \ref{weaklim}.  First, we recall  from (\ref{DIS}) (ii) and (\ref{deltameasure}) that for $\delta >0$ sufficiently small,
\begin{equation} \label{disdecomp0}
d\mu |_{S^*M(H,\delta)} = dt d\nu_{\delta}^H |_{S^*M(H,\delta)} = dx_n \, \, d\mu_{\delta}^H(x',\xi), \quad (x',\xi) \in S_H^*M (\delta).\end{equation}

Provided the defect measure $d\mu$ corresponds to a density-one eigenfunction subsequence in Proposition \ref{weaklim}, one can take the weak limit in $(\ref{disdecomp0})$ as $\delta \to 0$. The result is that
\begin{equation} \label{disdecompupshot}
d\mu(x,\xi)  =  d x_n \, d\mu^H(x',\xi), \quad (x,\xi) \in S^*M,
\end{equation}
where $d\mu^H$ is a Borel measure on $S_H^*M.$

Provided one chooses $T = T(\epsilon)$ small enough so that there is only one term in \eqref{aTFORM}, it then follows
by \eqref{aTFORM} and (\ref{disdecompupshot}) that
\begin{equation}\begin{array}{lll}  \label{lastcomp} \int_{S^*M}   a_{T,\epsilon}(x,\xi) d\mu & = & \frac{1}{T}
\int_{S^*M}   (1-\chi_{\epsilon})  (\pi^*\gamma^{-1} a_H)(G^{t_1(x,\xi)}(x,\xi)) \, d\mu \\&&\\
& = &   \frac{1}{T} \int_0^T \chi(\frac{t}{T}) dt 
\int_{S_H^*M}   (1-\chi_{\epsilon})(x',\xi) \, \pi^*\gamma^{-1}(x', \xi) \, a_H(x',\xi)  \, \pi^*\gamma(x',\xi)\, d\mu^H(x',\xi) \\&&\\
& = &   
\int_{S_H^*M}   (1-\chi_{\epsilon})  a_H (x', \xi)  \, d\mu^H(x',\xi).
\end{array} \end{equation} 
In the penultimate line of (\ref{lastcomp}) we have used that for $(x',\xi) \in S_H^*M, \,\, |\xi_n|  = \pi^* \gamma (x',\xi)$ so that the $\pi^*\gamma^{-1}$ factor in the symbol of $P_{T,\epsilon}$ gets cancelled by the $|\xi_n| = \pi^*\gamma$ factor in the numerator coming from the disintegration of $d\mu.$ 

From Proposition \ref{FTPROP} it then follows that for any $\epsilon >0$, there exists a density-one sequence $\tilde{\scal}(\epsilon) \subset \scal_F(T(\epsilon),\epsilon)$ such that
\begin{equation} \label{upshotfinal}
  \langle Op_H(a(1-\chi_{\epsilon}^H) ) \phi_{j_k} |_H, \phi_{j_k} |_H \rangle_{L^2(H)}  \sim_{k \to \infty} \int_{S_H^*M}   (1-\chi_{\epsilon})  a_H (x', \xi)   d\mu^H(x',\xi), \quad j_k \in \tilde{\scal}(\epsilon).\end{equation}
Finally, choose a sequence $\epsilon_n, \, n=1,2,3,...$ with $\epsilon_n \to 0^+$  in (\ref{upshotfinal}) and set
$\tilde{\scal}:= \cap_{n \geq 1} \tilde{S}(\epsilon_n).$
Theorem \ref{MDMPROP} then  follows by taking the  $\epsilon_n \to 0$ limit  in (\ref{upshotfinal}), with the result that for all $a \in S^0(H),$
$$ \langle Op_H(a) \phi_{j_k} |_H, \phi_{j_k} |_H \rangle_{L^2(H)}  \sim_{k \to \infty} \int_{S_H^*M}    a_H (x', \xi)   d\mu^H(x',\xi), \quad j_k \in \tilde{\scal}.$$

\qed





\section{\label{MMSECT} Mass and microsuppport: Proof of Theorem \ref{MASSMICRO}}

We consider the  space $$\acal_H = \bigcup_{T, \epsilon > 0}  \{P_{T, \epsilon}(a): a \in S^0(B^*H)\}$$ of ``cross-sectional pseudo-differential operators'' operators acting on $L^2(M)$ and let $$a_{T, \epsilon} = \sigma_{P_{T, \epsilon}}. $$

 \begin{defin} We define the {\it cross-sectional symbol space} $S^0 \acal_H$
  to be
 the space of  zeroth-order symbols $a_{T, \epsilon}$ of $P_{T, \epsilon} \in \acal$.  \end{defin}

As in \cite{Ge}, define  the wave front set   $WF(\scal)$  of
obstructions to microlocal compactness of $\scal$ as follows:

\begin{defin} We define the semi-classical   wave front set of a sequence $\scal =\{u_n\}$  with respect to $\acal_H$ such as
$$WF_{\acal_H} (\scal) = \bigcap_{A \in \acal_H, A \scal \; \rm{compact}} \{\sigma_A = 0\},$$
where the intersection runs over all $A \in \acal_H$ such that $A u_n$
is relatively compact in $L^2$. 
\end{defin}

By a microlocal defect measure $\mu$
of $\scal$ we mean a probability measure on $S^* M$ obtained as a weak*
limit of the functionals $\rho_j(A) = \langle A u_j, u_j \rangle$. In the case that $\scal$ has a unique microlocal defect measure (quantum limit), a well-known
result equates the wave front set with the support of the microlocal
defect measure: $WF(\scal)  = \rm{Supp} \mu$; see \cite{Ge} for background.
We define the relative analogue using the subspace $\acal_H$:

 \begin{lem} If $\scal = \{\phi_j\}$ is a sequence satisfying $||\phi_j |_H||_{L^2(H)}
= o(1)$,   then any microlocal defect measure   (quantum limit measure)  $\mu$ of $\scal$ on $S^*M$ satisfies 
$$ \rm{supp} \mu \subset \bigcap_{a, \epsilon, T} \{\sigma_{P_{T, \epsilon}(a)} =0\}.$$
\end{lem}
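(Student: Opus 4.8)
The plan is to combine the matrix-element identity of Lemma \ref{COMPLEM} with the decomposition of $\VT(a)$ from Proposition \ref{VTDECOMPa} and the asymmetry estimate of Lemma \ref{FTOUT}. First I would observe that for each $a \in S^0(B^*H)$ and each pair $(T,\epsilon)$, Lemma \ref{COMPLEM} gives
$$\langle Op_H(a)\phi_j|_H, \phi_j|_H\rangle_{L^2(H)} = \langle \VT(a)\phi_j,\phi_j\rangle_{L^2(M)} + o(\lambda_j^{-\infty}),$$
where the error absorbs the smoothing term $K_\epsilon$ in \eqref{op decomp}. Since $\|\phi_j|_H\|_{L^2(H)} = o(1)$ and $Op_H(a)$ is bounded on $L^2(H)$, the left-hand side tends to $0$ along the full sequence $\scal$. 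Hence $\langle \VT(a)\phi_j,\phi_j\rangle_{L^2(M)}\to 0$ along $\scal$.

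Next I would invoke the decomposition $\VT(a) = P_{T,\epsilon}(a) + F_{T,\epsilon}(a) + R_{T,\epsilon}(a)$ of Proposition \ref{VTDECOMPa}. The term $\langle R_{T,\epsilon}(a)\phi_j,\phi_j\rangle$ tends to $0$ along the full sequence since $R_{T,\epsilon}(a)$ is smoothing. The Fourier integral term $\langle F_{T,\epsilon}(a)\phi_j,\phi_j\rangle$ tends to $0$ only along a density-one subsequence $\scal_F(T,\epsilon)$ by Lemma \ref{FTOUT}, using microlocal asymmetry of $H$. Therefore, along $\scal_F(T,\epsilon)$ we get $\langle P_{T,\epsilon}(a)\phi_j,\phi_j\rangle_{L^2(M)}\to 0$. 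Now let $\mu$ be any microlocal defect measure of $\scal$, say realized along a subsequence $\scal'\subset\scal$. Intersecting $\scal'$ with the density-one set $\scal_F(T,\epsilon)$ gives a further subsequence $\scal''$ which still has $\mu$ as its microlocal defect measure (removing a density-zero set does not change the existence of the limit along a convergent subsequence, and any subsequence of a sequence converging to $\mu$ still converges to $\mu$). Along $\scal''$, $\langle P_{T,\epsilon}(a)\phi_j,\phi_j\rangle\to \int_{S^*M}\sigma_{P_{T,\epsilon}(a)}\,d\mu$, but also $\to 0$ by the previous step. Hence $\int_{S^*M}\sigma_{P_{T,\epsilon}(a)}\,d\mu = 0$ for every $a$, $\epsilon$, $T$.

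Finally I would conclude that $\mathrm{supp}\,\mu\subset\bigcap_{a,\epsilon,T}\{\sigma_{P_{T,\epsilon}(a)}=0\}$. This requires a short positivity argument: by \eqref{aTFORM} the symbol $\sigma_{P_{T,\epsilon}(a)}$ is, up to the positive factor $\gamma^{-1}$, a nonnegative average of $(1-\chi_\epsilon)\pi^*\gamma^{-1}a_H\circ G^{t_j}$, so taking $a\ge 0$ gives $\sigma_{P_{T,\epsilon}(a)}\ge 0$; if $\mu$ charged a point $(x_0,\xi_0)$ at which $\sigma_{P_{T,\epsilon}(a)}(x_0,\xi_0)>0$ for some nonnegative $a$, then by continuity $\sigma_{P_{T,\epsilon}(a)}>0$ on a neighborhood, forcing $\int\sigma_{P_{T,\epsilon}(a)}\,d\mu>0$, a contradiction. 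Thus $(x_0,\xi_0)$ lies in the zero set of every such symbol, giving the claimed inclusion. The main obstacle in making this rigorous is the bookkeeping of subsequences: one must check that removing the density-zero exceptional set from Lemma \ref{FTOUT} (which depends on $(T,\epsilon)$ and in principle on $a$) is compatible with simultaneously testing against all $(a,T,\epsilon)$; this is handled by fixing $(a,T,\epsilon)$ first, deriving $\int\sigma_{P_{T,\epsilon}(a)}\,d\mu=0$ for that fixed triple via a single subsequence extraction, and only then quantifying over all triples, since the conclusion $\int\sigma_{P_{T,\epsilon}(a)}\,d\mu=0$ is a statement about the already-fixed measure $\mu$.
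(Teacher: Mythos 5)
Your argument is correct and follows essentially the paper's own route: the paper proves this lemma by invoking Proposition \ref{FTPROP} (i.e.\ the chain Lemma \ref{COMPLEM}, Proposition \ref{VTDECOMPa}, Lemma \ref{FTOUT}) to convert the vanishing of the restricted matrix elements into $\int_{S^*M}\sigma_{P_{T,\epsilon}(a)}\,d\mu=0$ for every $(a,T,\epsilon)$, which is exactly what you do by unpacking that proposition. Your explicit positivity argument for passing from the vanishing integrals to the support statement, and the subsequence bookkeeping with $\scal_F(T,\epsilon)$, merely make explicit what the paper leaves implicit.
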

 \bigskip
 
 \begin{proof} 

It is obvious that if  $||\phi_j |_H||_{L^2(H)}
= o(1)$  then $||Op_{h_j}(a) \phi_j||_{L^2(H)} \to 0$  for all $a \in C^{\infty}_c(T^* H)$. Hence, all microlocal defect measures of
the sequence $\langle Op_{h_j}(a) \phi_j, \phi_j \rangle$ on $B^* H$
must vanish. 

Proposition \ref{MDMPROP}  relates matrix elements on $H$ to matrix elements on $M$.  If $||\phi_j |_H||_{L^2(H)}
= o(1)$ then
 $$0 =\lim_{k \to \infty} \langle P_{T, \epsilon}(a) \phi_{j_k}, \phi_{j_k} \rangle
 = \int_{S^*M} \sigma_{P_{T, \epsilon}(a)}  d \mu. $$
 
 \end{proof}
 
 \begin{rem} Note that $d\mu$ is the microlocal defect measure of $\scal$
 on $S^*M$. It does not need to equal $d\mu_{\scal}^M$ since the latter is the defect measure only relative to the subspace $\acal_H$. What the Lemma
 asserts is that both measures must have the same integrals with respect to symbols of operators in $\acal_H$. \end{rem}

We now want to show that $ \{\sigma_{P_{T, \epsilon}(a)} =0\}$ has measure
zero and that a microlocal defect measure supported in a set of measure
zero must come from a zero-density subsequence.

\begin{lem} $ \bigcap_{A \in \acal_H} \{\sigma_A = 0\} \subset S^*M \backslash FL(H)$. That is, $ \bigcap_{a} \{\sigma_{P_{T, \epsilon}(a)} =0\}$ is the
complement of the flowout $FL(H)$.  \end{lem}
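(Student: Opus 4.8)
The plan is to establish the two reverse inclusions, the second of which is the stated containment. The inclusion $S^*M\setminus\rm{FL}(H)\subseteq\bigcap_{A\in\acal_H}\{\sigma_A=0\}$ comes for free from the symbol formula \eqref{aTFORM}: if $(x_0,\xi_0)\notin\rm{FL}(H)$, then by \eqref{FLOWOUT} we have $G^t(x_0,\xi_0)\notin S^*_HM\setminus S^*H$ for every $t$, i.e.\ whenever the geodesic $\exp_{x_0}(t\xi_0)$ lies on $H$ it is tangent to $H$; hence that geodesic has no transversal intersection with $H$, the set of impact times $\{t_j(x_0,\xi_0)\}$ occurring in \eqref{aTFORM} is empty, and $\sigma_{P_{T,\epsilon}(a)}(x_0,\xi_0)=a_{T,\epsilon}(x_0,\xi_0)=0$ for every $a,T,\epsilon$.

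For the stated inclusion I would argue contrapositively: given $(x_0,\xi_0)\in\rm{FL}(H)$, I would produce $a\in S^0(B^*H)$ and $T,\epsilon>0$ with $\sigma_{P_{T,\epsilon}(a)}(x_0,\xi_0)\neq0$. Pick $t_*$ with $(y_*,\eta_*):=G^{t_*}(x_0,\xi_0)\in S^*_HM\setminus S^*H$; this is a transversal intersection, so $t_*=t_{j_0}(x_0,\xi_0)$ for some $j_0$, with $t_{j_0}$ smooth near $(x_0,\xi_0)$ by Lemma \ref{open}. Supposing in addition that $(y_*,\eta_*)$ is non-conormal (see below), $\gamma(y_*,\eta_*)=(1-|\pi_H(y_*,\eta_*)|^2)^{1/2}\in(0,1)$, so $\pi^*\gamma^{-1}$ is finite and positive there, and since $(y_*,\eta_*)$ then lies in the open set $S^*_HM\setminus(S^*H\cup N^*H)$ I can choose $\epsilon$ small enough that $\chi_\epsilon=\chi^{(tan)}_\epsilon+\chi^{(n)}_\epsilon$ vanishes near $(y_*,\eta_*)$, whence $(1-\chi_\epsilon)(y_*,\eta_*)=1$. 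I then take $0\le a\in C_0^\infty(B^*H)$ supported near $\pi_H(y_*,\eta_*)$ with $a(\pi_H(y_*,\eta_*))>0$, so $a_H=\pi_H^*a\ge0$ and $a_H(y_*,\eta_*)>0$, and choose $T$ with $\chi(T^{-1}t_*)=1$. Every summand in \eqref{aTFORM} is then $\ge0$, the $j=j_0$ one equals $\tfrac{1}{T}\gamma(y_*,\eta_*)^{-1}a_H(y_*,\eta_*)$, and hence $\sigma_{P_{T,\epsilon}(a)}(x_0,\xi_0)\ge\tfrac{1}{T}\gamma(y_*,\eta_*)^{-1}a_H(y_*,\eta_*)>0$.

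The main obstacle is precisely the conormal directions. Because $\chi_\epsilon$ contains the conormal cutoff $\chi^{(n)}_\epsilon$, the factor $(1-\chi_\epsilon)$ annihilates any impact point at which the geodesic meets $H$ normally, so the construction above genuinely fails for those $(x_0,\xi_0)\in\rm{FL}(H)$ whose geodesic meets $H$ transversally \emph{only} in conormal directions --- for instance geodesics through the centre of a geodesic sphere $H$. I would resolve this by noting that the set of such covectors is contained in $\bigcup_{t\in\R}G^t(N^*H\cap S^*_HM)$, which has Hausdorff dimension $\le m<2m-1=\dim S^*M$ and is therefore $\mu_L$-null; combining with the first inclusion gives $\bigcap_{A\in\acal_H}\{\sigma_A=0\}\subseteq(S^*M\setminus\rm{FL}(H))\cup\zcal$ with $\mu_L(\zcal)=0$, i.e.\ the asserted identity up to a $\mu_L$-null set. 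This is exactly what the sequel needs: together with the flowout hypothesis \eqref{ASSUME} it gives $\mu_L\big(\bigcap_{A\in\acal_H}\{\sigma_A=0\}\big)=0$, so a microlocal defect measure of any subsequence with $\|\phi_j|_H\|_{L^2(H)}=o(1)$ lives on a $\mu_L$-null set and, by the local Weyl law, that subsequence must have zero upper density --- which is Theorem \ref{MASSMICRO}.
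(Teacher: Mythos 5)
Your argument is sound, and its core mechanism is the same one the paper uses: positivity of the flowout symbol \eqref{aTFORM} at covectors whose orbit meets $H$ transversally. The execution differs, though. The paper's proof is a one-line appeal to the ellipticity Lemma \ref{MEPLEM} with $a\equiv 1$ (namely $\sigma(P^1_{T,\epsilon})\geq 1/T$ on $\mathrm{supp}\,\chi_{T,\epsilon}$), from which it concludes that the common zero set can only contain points whose orbit never hits $S^*_HM$ non-tangentially; you instead argue contrapositively with a bump symbol $a\geq 0$ localized at the projected impact covector and an $\epsilon$ chosen so small that the cutoff $\chi_\epsilon$ of \eqref{CUTOFF} vanishes at the impact point, then use nonnegativity of all summands in \eqref{aTFORM} to keep only one term.

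The conormal caveat you raise is genuine and is in fact a correction to the paper rather than a weakness of your proof. Since $\chi_\epsilon=\chi^{(tan)}_\epsilon+\chi^{(n)}_\epsilon$ contains the conormal cutoff, the factor $(1-\chi_\epsilon)$ in \eqref{aTFORM} annihilates exactly conormal impacts for \emph{every} $\epsilon>0$, so a point of $\rm{FL}(H)$ all of whose transversal impacts lie in $N^*H$ does belong to $\bigcap_{A\in\acal_H}\{\sigma_A=0\}$, and the pointwise inclusion asserted by the Lemma fails there. The paper's own argument passes over this: in the proof of Lemma \ref{MEPLEM} the claim that $(1-\chi_\epsilon)(G^{t_1(x,\xi)}(x,\xi))=1$ on $\mathrm{supp}\,\chi_{T,\epsilon}$ is only justified against near-tangential impacts (the Urysohn cutoff \eqref{urysohn} controls $|\pi_H(G^{t_1})|<1-2\epsilon$ but says nothing about near-conormal directions). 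Your repair --- the exceptional covectors lie in $\bigcup_{t}G^t(N^*H\cap S^*_HM)$, of Hausdorff dimension $\leq m<2m-1$, hence $\mu_L$-null --- mirrors the dimension count the paper itself uses for $\bigcup_t G^t(S^*H)$, and the resulting statement ``equality up to a $\mu_L$-null set'' is all the sequel needs: combined with \eqref{ASSUME} it still places the defect measures of any sequence with $\|\phi_j|_H\|_{L^2(H)}=o(1)$ on a closed $\mu_L$-null set, so the Corollary and Theorem \ref{MASSMICRO} go through unchanged. Two minor points: Lemma \ref{open} states smoothness only of $t_1$, so you should note that the same implicit-function argument gives smoothness of $t_{j_0}$ near a transversal impact; and your choice of $T$ should also ensure $|t_*|<T$ so that the impact is actually registered in the $(-T,T)$ window of \eqref{aTFORM}, which your condition $\chi(T^{-1}t_*)=1$ already guarantees.
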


\begin{proof}
We denote by $\sigma \acal_H$ the set of all possible symbols of $P_{T, \epsilon} \in \acal_H$. By Lemma \ref{MEPLEM},
 if  $a_H > 0$ then $\sigma_{P_{T, \epsilon}} (x, \xi) > 0$ if $G^t(x, \xi)$ intersects $S^*_H M$. Hence the set  $\bigcap_{a} \{\sigma_{P_{T, \epsilon}(a)} =0\}$ cannot contain any points for $T$ small (depending on $\epsilon$ and $\epsilon > 0$. 
  As a result, 
 the zero set can only contain points $(x, \xi)$ for which
 the orbit never hits $H$.

 \end{proof}

\subsection{Spectral  projections in $\hcal_{\scal}$}

We have been considering microlocal defect measures (quantum limits) of
$\langle P_{T, \epsilon}(a) \phi_{j}, \phi_j \rangle$.  But we may also consider microlocal defect measures of the normalized traces   
\begin{equation} \label{rhoscal} \rho_{\scal, \lambda} (A) :  = \frac{1}{N(\lambda, \scal)} \rm{Tr}
A \Pi_{\scal, \lambda}, \;\; A \in \acal_H, \end{equation}
where if $\scal =\{\phi_{j_k}\}$ then
$$\Pi_{\scal,\lambda} f = \sum_{j: \lambda_{j_k} \leq \lambda} \langle f, \phi_{j_k} \rangle \phi_{j_k}. $$
These are states on the space $\acal_H$. 

\begin{lem} Let $\mu_{\scal}$ be a microlocal defect measure for
the functionals $\rho_{\scal, \lambda}$. Then $a_{T, \epsilon} \mu_{\scal} = 0$
for all symbols in $\sigma \acal_H$.  \end{lem}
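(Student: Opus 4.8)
The plan is to show that any microlocal defect measure $\mu_{\scal}$ of the family of states $\rho_{\scal, \lambda}$ on $\acal_H$ must annihilate every cross-sectional symbol $a_{T,\epsilon} \in \sigma\acal_H$. The mechanism is exactly the same as in the case of the functionals $\langle P_{T,\epsilon}(a)\phi_j,\phi_j\rangle$ treated just above: one transfers the computation to the hypersurface via Lemma \ref{COMPLEM} and Proposition \ref{VTDECOMPa}, discards the Fourier integral term via Lemma \ref{FTOUT} (now in its averaged, trace form as in \eqref{FIO part 2}), and then invokes the pointwise Weyl law for the restricted operators $Op_H(a)\gamma_H$ on $H$ to see that the remaining pseudodifferential term $P_{T,\epsilon}(a)$ has trace growing like $o(N(\lambda))$ whenever $a$ is supported where the corresponding $L^2$-restriction mass is controlled. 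Concretely, I would argue: $\frac{1}{N(\lambda)}\operatorname{Tr} P_{T,\epsilon}(a)\Pi_{\scal,\lambda}$ has the same limit as $\frac{1}{N(\lambda)}\operatorname{Tr} \bar{V}_{T,\epsilon}(a)\Pi_{\scal,\lambda}$ up to the $F_{T,\epsilon}$ and $R_{T,\epsilon}$ pieces, and by Lemma \ref{COMPLEM} this equals $\frac{1}{N(\lambda)}\sum_{j_k: \lambda_{j_k}\leq \lambda}\langle Op_H(a)\phi_{j_k}|_H,\phi_{j_k}|_H\rangle$, which is $\frac{1}{N(\lambda)}\operatorname{Tr}\big(\Pi^H_{\scal,\lambda}\gamma_H Op_H(a)\gamma_H^*\big)$.

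First I would set up the trace version of the time-averaging identity. Applying $\operatorname{Tr}(\,\cdot\,\Pi_{\scal,\lambda})$ to the decomposition $\bar{V}_{T,\epsilon}(a) = P_{T,\epsilon}(a) + F_{T,\epsilon}(a) + R_{T,\epsilon}(a)$ of Proposition \ref{VTDECOMPa} gives
$$
\frac{1}{N(\lambda,\scal)}\operatorname{Tr}\big(P_{T,\epsilon}(a)\Pi_{\scal,\lambda}\big) = \frac{1}{N(\lambda,\scal)}\operatorname{Tr}\big(\bar{V}_{T,\epsilon}(a)\Pi_{\scal,\lambda}\big) + o(1),
$$
since $R_{T,\epsilon}$ is smoothing and the $F_{T,\epsilon}$ contribution is $o(1)$ after averaging, by the variance estimate \eqref{FIO part 2} (which only uses the asymmetry of $H$, not ergodicity, and applies verbatim to the Weyl sum restricted to $\scal$ provided one passes to a density-one subsequence as in Lemma \ref{FTOUT}; note $\acal_H$ operators are built from $P_{T,\epsilon}$, so the microsupport lies in $FL(H)$ and the symmetric set carries no mass). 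Then by Lemma \ref{COMPLEM} the right-hand trace equals the averaged restricted matrix elements $\frac{1}{N(\lambda,\scal)}\sum_{j_k:\lambda_{j_k}\leq\lambda}\langle Op_H(a)\phi_{j_k}|_H,\phi_{j_k}|_H\rangle_{L^2(H)}$.

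Next I would bound this restricted Weyl sum. The key input is the pointwise (Hörmander) Weyl law for the restriction: $\frac{1}{N(\lambda)}\sum_{\lambda_j\leq\lambda}\langle Op_H(a)\phi_j|_H,\phi_j|_H\rangle_{L^2(H)}$ converges to a fixed integral $\int_{B^*H} a\, d\nu$ over the full orthonormal basis — this is the content of the uniform $L^2$-restriction bound of Lemma \ref{uniformweyl} / Proposition \ref{weaklim} phrased at the level of traces. Since $\scal$ is, by hypothesis of the ambient theorem (Theorem \ref{MASSMICRO}), a sequence with $\|\phi_{j_k}|_H\|_{L^2(H)} = o(1)$, the averaged restricted matrix elements along $\scal$ tend to zero; equivalently, for any nonnegative $a$, $\frac{1}{N(\lambda,\scal)}\sum_{j_k}\langle Op_H(a)\phi_{j_k}|_H,\phi_{j_k}|_H\rangle \to 0$ because each term is $o(1)$ and dominated, and thus any weak-* limit $\mu_{\scal}$ satisfies $\int_{S^*M} a_{T,\epsilon}\, d\mu_{\scal} = 0$ for every $a_{T,\epsilon} = \sigma(P_{T,\epsilon}(a))$. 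Taking $a\geq 0$ arbitrary and then general $a$ by the standard $a = a_+ - a_-$ decomposition yields $a_{T,\epsilon}\mu_{\scal} = 0$.

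\textbf{The main obstacle} I expect is handling the interchange of limits — the Weyl-sum (trace) limit $\lambda\to\infty$ versus the time-average parameters $T,\epsilon$ and the passage $T = T(\epsilon)\to 0$ needed to keep a single term in \eqref{aTFORM} — while simultaneously extracting a density-one subsequence on which the $F_{T,\epsilon}$ variance estimate holds for a countable dense family of $(T,\epsilon)$ and symbols. This is the same bookkeeping already carried out in Sections \ref{FTSECT}--\ref{DEFMES}, so the resolution is a diagonal argument over countably many $(\epsilon_n, T(\epsilon_n))$ and a countable dense set of symbols, exactly as in the proof of Theorem \ref{MDMPROP}; the point to be careful about is that the trace/Weyl-law normalization $N(\lambda,\scal)$ is the counting function of $\scal$, so one needs $\scal$ to have positive upper density for the normalized traces to be meaningful — but that is harmless here, since if $D^*(\scal) = 0$ the conclusion of Theorem \ref{MASSMICRO} already holds, and otherwise $\rho_{\scal,\lambda}$ is a bona fide sequence of states whose limits are the asserted measures.
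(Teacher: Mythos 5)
Your proposal is correct and takes essentially the same route as the paper: the paper's (very terse) proof just Ces\`aro-averages the preceding individual-eigenfunction argument — the restricted matrix elements are $o(1)$ by hypothesis, the $F_{T,\epsilon}$ term is removed by asymmetry via the variance estimate, so the averaged lifts $\hat{\rho}_{\scal,\lambda}$ applied to $\sigma(P_{T,\epsilon}(a))$ tend to $0$ — which is exactly what you carry out, only organized directly at the level of the normalized traces. Your explicit caveat about needing positive density of $\scal$ (for the Cauchy--Schwarz step transferring the variance bound to the $\scal$-restricted Weyl sum) matches the paper's implicit setting, since the Lemma is only invoked in the contradiction argument where $D^*(\scal)>0$, so there is no gap.
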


\begin{proof}

 The argument above for individual eigenfunctions is also  true for the microlocal
lift of the projector $\Pi_{\scal, \lambda}$. Pick $\epsilon, T$ so that
the complement has measure $< \delta$, the putative density of $\scal$.

Let $d \Phi_j$ be a the postive microlocal
lift of $\phi_j$, i.e $\int_{S^*M} a d \Phi_j = \langle Op(a) \phi_j, \phi_j \rangle$
where $Op(a)$ is a positive quantization (for example, a Friedrichs quantization).   $\scal$ and its density are  independent of $T, \epsilon$. But the limit $\mu_{\scal}$ of
$$\hat{\rho}_{\scal, \lambda} : = \frac{1}{N(\lambda, \scal)} \sum_{j: \lambda_j \leq \lambda, \lambda_j \in \scal}
d \Phi_{j} $$
must also satisfy
$$a_{T, \epsilon} \mu_{\scal} = 0, \;\; .$$
\end{proof}

\begin{cor} The  defect measures $\mu_{\scal} $ of the trace funtionals $\rho_{\scal, \lambda} $ are supported in a the complement of $FL(H)$ in $S^*M$, a  set of Liouville measure zero.
\end{cor}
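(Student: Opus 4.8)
The plan is to read the Corollary off from the two preceding Lemmas. First I would take any weak-$*$ limit point $\mu_{\scal}$ of the states $\rho_{\scal,\lambda}$ on $\acal_H$ (along a subsequence $\lambda_k\to\infty$ realizing it); by the Lemma asserting that $a_{T,\epsilon}\mu_{\scal}=0$ for all $a_{T,\epsilon}\in\sigma\acal_H$, the measure $\mu_{\scal}$ kills every cross-sectional symbol. Taking $a\ge 0$ in the symbol formula \eqref{aTFORM} makes each $a_{T,\epsilon}$ non-negative, and replacing a general $a$ by $|a|$ only enlarges the set $\{a_{T,\epsilon}\ne 0\}$; hence $\mu_{\scal}$ is concentrated on $\{a_{T,\epsilon}=0\}$ for every admissible $(a,T,\epsilon)$, so $\operatorname{supp}\mu_{\scal}\subseteq\bigcap_{A\in\acal_H}\{\sigma_A=0\}$.

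Then I would invoke the Lemma identifying this common zero set, $\bigcap_{A\in\acal_H}\{\sigma_A=0\}\subseteq S^*M\setminus FL(H)$: by \eqref{aTFORM} together with the positivity statement of Lemma \ref{MEPLEM}, $\sigma_{P_{T,\epsilon}(a)}(x,\xi)>0$ as soon as the geodesic through $(x,\xi)$ strikes $S^*_H M$ transversally within time comparable to $T$, and letting $T\to\infty$, $\epsilon\to 0^+$ and $a\ge 0$ vary, these positivity sets exhaust the set $\Lambda$ of \eqref{LAMBDADEF} of covectors whose orbit meets $H$ non-tangentially. Thus $\operatorname{supp}\mu_{\scal}\subseteq S^*M\setminus FL(H)$. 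Finally, the full-measure flowout hypothesis \eqref{ASSUME}, i.e.\ $\mu_L(\Lambda)=1$ in the form \eqref{ASSUMEb}, gives $\mu_L(FL(H))=1$, so $\mu_L(S^*M\setminus FL(H))=0$; hence $\mu_{\scal}$, though a probability measure on $S^*M$, is carried by a set of Liouville measure zero, as claimed.

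The one point deserving care --- already handled in the two Lemmas cited, but worth re-examining --- is the uniformity of the exhaustion in the middle step: one must check from the explicit symbol \eqref{aTFORM} and Lemma \ref{MEPLEM} that every covector whose orbit hits $H$ non-tangentially really does lie in $\{\sigma_{P_{T,\epsilon}(a)}>0\}$ for some suitable $(a,T,\epsilon)$. This uses the asymmetry of $H$ (so that the Fourier-integral term $F_{T,\epsilon}$ has legitimately been discarded upstream) and the transversality of $S^*_H M$ to the geodesic flow. Once those inputs are granted, the remaining steps are routine measure-theoretic bookkeeping.
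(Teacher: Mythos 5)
Your proof is correct and follows essentially the same route as the paper: it combines the two preceding lemmas (that $a_{T,\epsilon}\,\mu_{\scal}=0$ for every cross-sectional symbol, and that $\bigcap_{A \in \acal_H}\{\sigma_A=0\}$ lies in $S^*M \setminus FL(H)$) with the flowout hypothesis \eqref{ASSUME} to place the support of $\mu_{\scal}$ in a set of Liouville measure zero. The only minor difference is in how the vanishing is spread along the flow: the paper invokes the $G^t$-invariance of $\mu_{\scal}$ (which also exhibits the support as a closed invariant set, the form used in the next Proposition), whereas you obtain the same exhaustion by letting $T\to\infty$, $\epsilon\to 0^+$ in the symbols, which is exactly the content of the cited zero-set lemma.
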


\begin{proof}

The limit is a $G^t$ invariant probability measure. Hence $\mu_{\scal}$
must vanish on $FL(H)$. Thus, $\mu_{\scal}$ is supported on a closed invariant
set of Liouville measure zero, i.e. $\scal$ is a positive
sequence which `concentrates' on a closed invariant set
$\Gamma$ of $\mu_L$-measure zero, where $\mu_L$ is Liouville measure.

\end{proof}

To get a contradiction, we need to show that if $\scal$ has positive
density, then the microlocal defect measures cannot all be supported
in a set of measure zero.

\begin{prop} Suppose that $\Gamma$ is a closed invariant set
of $\mu_L$-measure 0, and that $\scal$ is a subsequence all of
whose microlocal defect measures are supported in $\Gamma$. Then
$D^*(\scal) =0$.  
\end{prop}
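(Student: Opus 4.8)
The plan is to prove the contrapositive: if $D^*(\scal) > 0$, then $\scal$ has a microlocal defect measure charging the complement of $\Gamma$. So assume $\limsup_{\lambda \to \infty} \frac{1}{N(\lambda)} \#\{j \in \scal : \lambda_j \le \lambda\} = c > 0$. The key observation is the local Weyl law: for any $a \in S^0(M)$,
$$\frac{1}{N(\lambda)} \sum_{j : \lambda_j \le \lambda} \langle Op(a) \phi_j, \phi_j \rangle \longrightarrow \int_{S^*M} \sigma_a \, d\mu_L,$$
where $Op(a)$ is a positive (Friedrichs) quantization and $\mu_L$ is normalized Liouville measure. The strategy is to compare the full Weyl sum against the partial sum over $\scal$.

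First I would pass to a subsequence of $\lambda$'s along which $\frac{1}{N(\lambda)}\#\{j \in \scal : \lambda_j \le \lambda\} \to c$ and, by a diagonal/weak-$*$ compactness argument on the (separable) symbol space, along which the normalized functionals $\rho_{\scal,\lambda}(A) = \frac{1}{N(\lambda,\scal)} \operatorname{Tr} A \Pi_{\scal,\lambda}$ converge to a probability measure $\mu_{\scal}$ on $S^*M$; by hypothesis $\operatorname{supp} \mu_{\scal} \subset \Gamma$, so $\mu_{\scal}(S^*M \setminus \Gamma) = 0$. Since $\Gamma$ is closed with $\mu_L(\Gamma) = 0$, one can choose an open neighborhood $U \supset \Gamma$ with $\mu_L(U) < c/2$ (outer regularity of Liouville measure), and then a symbol $0 \le a \le 1$, $a \equiv 1$ on $\Gamma$, $\operatorname{supp} a \subset U$. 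Then $\int_{S^*M} a \, d\mu_{\scal} = 1$ because $\mu_{\scal}$ is a probability measure supported in $\Gamma \subset \{a = 1\}$, so
$$\frac{1}{N(\lambda,\scal)} \sum_{j \in \scal, \lambda_j \le \lambda} \langle Op(a) \phi_j, \phi_j \rangle \longrightarrow 1.$$

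Next I would exploit positivity. Write the full Weyl sum as the sum over $\scal$ plus the sum over the complement $\scal^c$. Since $Op(a) \ge 0$ is a positive operator, every term $\langle Op(a) \phi_j, \phi_j \rangle \ge 0$, hence
$$\frac{1}{N(\lambda)} \sum_{j : \lambda_j \le \lambda} \langle Op(a) \phi_j, \phi_j \rangle \;\ge\; \frac{1}{N(\lambda)} \sum_{j \in \scal, \lambda_j \le \lambda} \langle Op(a) \phi_j, \phi_j \rangle \;=\; \frac{N(\lambda,\scal)}{N(\lambda)} \cdot \frac{1}{N(\lambda,\scal)} \sum_{j \in \scal, \lambda_j \le \lambda} \langle Op(a) \phi_j, \phi_j \rangle.$$
Taking $\lambda \to \infty$ along the chosen subsequence, the left side converges to $\int_{S^*M} a \, d\mu_L \le \mu_L(U) < c/2$, while the right side converges to $c \cdot 1 = c$. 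This gives $c \le c/2$, a contradiction. Therefore $D^*(\scal) = 0$.

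The main obstacle is the bookkeeping around microlocal defect measures being \emph{probability} measures (total mass one), which requires the positive quantization $Op(a)$ applied to the constant symbol $a \equiv 1$ to recover $\|\phi_j\|_{L^2}^2 = 1$ up to $O(\lambda_j^{-1})$ errors that are uniform enough to survive Ces\`aro averaging; and, relatedly, ensuring that along the chosen subsequence of $\lambda$'s the partial-sum functionals genuinely converge rather than merely having convergent subsequences — handled by a further diagonal extraction. One must also be slightly careful that $\mu_{\scal}$ as constructed here (a defect measure for the trace functionals restricted to all of $S^0(M)$, using a positive quantization) is still supported in $\Gamma$: this follows because its restriction to the subalgebra $\acal_H$ agrees with the earlier-constructed $\mu_{\scal}$ (by the preceding Corollary, integrals against symbols in $\sigma\acal_H$ vanish), and $\{ \sigma_{P_{T,\epsilon}(a)} = 0 \}_{a,T,\epsilon}$ is exactly $S^*M \setminus FL(H) \supset S^*M \setminus \Gamma$ up to the flowout being the support — so any $G^t$-invariant defect measure of $\scal$ vanishes on $FL(H)$, hence is supported in the measure-zero set $\Gamma$. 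Everything else is a routine application of outer regularity, Urysohn, and the positivity of the Friedrichs quantization.
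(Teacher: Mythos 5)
Your argument is correct and is essentially the paper's own proof: both run by contradiction from positive density, test against a positively quantized symbol concentrated on a small Liouville-measure neighborhood of $\Gamma$, use positivity of the Friedrichs quantization to dominate the $\scal$-averaged trace functional $\rho_{\scal,\lambda}$ by the full Weyl average, and invoke the local Weyl law to force the limit mass to be small, contradicting that the limit is a probability measure. The only cosmetic difference is that you phrase the contradiction as $c\leq c/2$ with a specific cutoff symbol, while the paper concludes $\hat{\rho}_{\scal,\lambda}(Op(a))\to 0$ for all $a$, which is absurd for $a\equiv 1$.
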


\begin{proof}

We argue by contradiction and show that if  $\scal$ has positive
density, then the `maximal' microlocal defect measure cannot be supported in 
a set of $\mu_L$ measure zero. This maximal measure comes from 
the spectral projections onto the sequence $\scal$.

If $\scal$ has positive density, then there exists $A > 0$ so that
$$ \limsup_{\lambda \to \infty} \frac{N(\lambda) }{N(\lambda, \scal)}  \leq A. $$

Let $V$ be a conic neighborhood of $\Gamma$ and let $\chi_V$ be a conic cutoff
to $V$. Then for any $a$,
\begin{equation} \label{PUTCHI} \lim_{\lambda \to \infty} \hat{ \rho}_{\scal, \lambda}(Op(a)) =\lim_{\lambda \to \infty}  \hat{\rho}_{\scal, \lambda}(Op(\chi_V a)). \end{equation}

Let $$\hat{\rho}_{ \lambda} : = \frac{1}{N(\lambda)} \sum_{j: \lambda_j \leq \lambda}
d \Phi_{j}. $$ Recall the  local Weyl law
 \begin{equation} \label{LWL}\hat{\rho}_{\lambda}(A) = 
  \frac{1}{N(\lambda)}
 \sum_{j: \lambda_j \leq \lambda} \langle A \phi_j,  \phi_j
 \rangle_{L^2(M)}  \to \int_{S^*M} \sigma_A d\mu_L  \end{equation} on $M$.

For $\lambda$ sufficiently large, by \eqref{PUTCHI},
$$\begin{array}{lll}\limsup_{\lambda} \frac{1}{N(\lambda, \scal)} \sum_{j: \lambda_j \leq \lambda, \lambda_j \in \scal}
\rho_j(Op(a)) & = &\limsup_{\lambda}  \frac{N(\lambda) }{N(\lambda, \scal)} \frac{1}{N(\lambda)} \sum_{j: \lambda_j \leq \lambda, \lambda_j \in \scal}
\rho_j(Op(\chi_V a))\\&&\\
& \leq &  A \limsup_{\lambda}  \frac{1}{N(\lambda)} \sum_{j: \lambda_j \leq \lambda}
\rho_j(Op(\chi_V a)) \\&&\\ &=&  A \limsup_{\lambda}  \; \hat{\rho}_{\lambda}(Op(\chi a)) \leq \mu_L(V). \end{array}$$
If $\mu_L(\Gamma) = 0$ the the right side is $\leq \epsilon$ if $V$ is
an $\epsilon$-neighborhood. It follows that $ \lim_{\lambda \to \infty} \hat{ \rho}_{\scal, \lambda}(Op(a)) =0$ for all $a$, which is absurd. This contradiction completes the proof.

\end{proof}

\section{\label{PFSECT}Proof of Theorem \ref{mainthm1} }
In this section we prove the quantitative refinement of Theorem \ref{MASSMICRO} stated in Theorem \ref{mainthm1}.

\begin{proof} 
To prove Theorem \ref{mainthm1} we study integrals $\int_H f |\phi_j|^2 dS$
or more general matrix elements $\langle Op_h(a) \gamma_H^* \phi_j,
\gamma_H^* \phi_j \rangle_{L^2(H)}$. In order to prove that $H$ is good
for a density one sequence of eigenfunctions, it suffices to show that
 the matrix elements to do not tend to zero for at least one symbol
$a$.
 Since 
$$ \langle (\gamma_H^* Op_H(a)
 \gamma_H) )_{\geq \epsilon} \phi_j |_H, \phi_j |_H \rangle_{L^2(H)} = \langle \VT(a)  \phi_j,  \phi_j
 \rangle_{L^2(M)} $$ it follows from (\ref{decomp}) that
 
 \begin{align} \label{key1}
 \limsup_{\lambda \to \infty} \frac{1}{N(\lambda)} \sum_{j: \lambda_j \leq \lambda} \Big|  \langle (\gamma_H^* Op_H(a)
 \gamma_H) )_{\geq \epsilon} \phi_j , \phi_j \rangle_{L^2(M)}   -  \langle  [ P_{T,\epsilon}(a)  + F_{T,\epsilon}(a)] \phi_j,  \phi_j
 \rangle_{L^2(M)} \Big|^2  = 0. \end{align}

\subsection{Contribution of the pseudo-differential term $P_{T, \epsilon}(a)$.}

In view of (\ref{FIO part}), it follows from (\ref{key1}) that
\begin{equation} \label{key2}
\limsup_{\lambda \to \infty} \frac{1}{N(\lambda)} \sum_{j: \lambda_j \leq \lambda} \left|  \langle (\gamma_H^* Op_H(a)
 \gamma_H) )_{\geq \epsilon} \phi_j , \phi_j \rangle_{L^2(M)}   - \langle P_{T,\epsilon}(a)  \phi_j,  \phi_j
 \rangle_{L^2(M)}  \right|^2 = 0. \end{equation}
 
  Since we are free to choose the non-negative symbol $a$, we henceforth put 
 $a(s,\sigma):= 1$  and simply write 
\begin{equation} \label{PT1} P^1_{T,\epsilon}:= P_{T,\epsilon}(1) \end{equation}  with
 \begin{equation} \label{symbol}
 \sigma(P^1_{T,\epsilon})(x,\xi) = \frac{1}{T}  \sum_{j \in {\mathbb Z}}   (1-\chi_{\epsilon}) \pi_H^*(\gamma^{-1})(G^{t_j(x,\xi)}(x,\xi))  \, \chi (T^{-1}  t_j(x,\xi))  \end{equation}

\subsubsection{Microlocal ellipticity of $P^1_{T, \epsilon}$}

We now observe   that for fixed $T > 0, \epsilon >0,$ 
 $P_{T,\ep}^1 $ is microlocally elliptic  on the support of  $\chi_{T, \epsilon}.$

\begin{lem} \label{MEPLEM} We have \begin{equation} \label{SIGMAPLB} \sigma(P^1_{T,\ep})(x,\xi) \geq \frac{1}{T}, \quad (x,\xi) \in  \rm{supp}  \chi_{T,\ep}. \end{equation}
\end{lem}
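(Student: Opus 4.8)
The plan is to read off \eqref{SIGMAPLB} directly from the symbol formula \eqref{symbol} by isolating the single term of the sum at which $t_j(x,\xi)$ equals the first impact time $t_1(x,\xi)$, after observing that every term of the sum is nonnegative. First I would record this nonnegativity: the time cutoff $\chi$ takes values in $[0,1]$; for $\epsilon$ small the tangential and conormal cones are disjoint, so $1-\chi_\epsilon = 1-\chi_\epsilon^{(tan)}-\chi_\epsilon^{(n)}$ takes values in $[0,1]$; and on $S^*_H M$ one has $\gamma = |\eta_n| = (1-|\sigma|^2)^{1/2}\in(0,1]$ by \eqref{gammaDEF}, so the factor $\pi_H^*(\gamma^{-1})$, evaluated at an impact covector in $S^*_H M$, is $\geq 1$. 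Hence $\sigma(P^1_{T,\epsilon})\geq 0$ everywhere, and it suffices to bound below the one summand with $t_j = t_1(x,\xi)$.

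Next I would unwind what it means for $(x,\xi)$ to lie in $\mathrm{supp}\,\chi_{T,\epsilon}$. Taking $T \geq R(\delta)$ with $R(\delta),\epsilon(\delta)$ as in \eqref{measure1}, the cutoff is (by the Urysohn construction \eqref{urysohn}, together with Lemma \ref{open} and the decomposition in \eqref{LAMBDADEF}) supported in the set of covectors $(x,\xi)$ whose geodesic first meets $H$ transversally, within time $|t_1(x,\xi)| < R(\delta) \leq T$, at an impact covector $\zeta := G^{t_1(x,\xi)}(x,\xi) \in S^*_H M$ making angle $\geq\epsilon$ with $TH$ (from the inequality $|\pi_H(\zeta)| < 1-2\epsilon$ defining $\Lambda_{R,\epsilon}$) and, after a harmless shrinking of the support if needed, also angle $\geq\epsilon$ with $NH$. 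Here Lemma \ref{open} supplies the finiteness and smoothness of $t_1$ near such points. Evaluating the $t_1$-term of \eqref{symbol} at such $(x,\xi)$ then gives $\chi(T^{-1}t_1(x,\xi)) = 1$ (as $|t_1(x,\xi)|/T < 1$), $(1-\chi_\epsilon)(\zeta) = 1$ (as $\zeta$ is $\epsilon$-away from both the tangential and conormal directions), and $\pi_H^*(\gamma^{-1})(\zeta)\geq 1$, so
\[
\sigma(P^1_{T,\epsilon})(x,\xi) \;\geq\; \frac{1}{T}\,(1-\chi_\epsilon)(\zeta)\,\pi_H^*(\gamma^{-1})(\zeta)\,\chi(T^{-1}t_1(x,\xi)) \;\geq\; \frac{1}{T},
\]
which is the asserted bound.

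The routine content here is just this one substitution; the only thing requiring care — what I would regard as the main obstacle — is the bookkeeping on $\chi_{T,\epsilon}$: it must be arranged so that on its support \emph{both} $\chi(T^{-1}t_1)\equiv 1$ and $(1-\chi_\epsilon)(\zeta)\equiv 1$ hold identically, which is what forces $T$ to be chosen no smaller than the return-time cutoff $R(\delta)$ of \eqref{measure1} and forces the impact directions on the support to stay a definite distance from both $S^*H$ and $N^*H$. One still wants $\mu_L(\mathrm{supp}\,\chi_{T,\epsilon}) \geq 1-O(\delta)$ for the density argument that follows, and this holds by \eqref{measure1} up to the $O(\epsilon)$ loss incurred in the conormal shrinking, which is absorbed by taking $\epsilon$ and $\delta$ small together.
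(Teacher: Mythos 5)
Your proof is correct and follows essentially the same route as the paper's: every term of the sum \eqref{aTFORM2} is nonnegative, so one retains only the first-impact term, on the cutoff's support the time factor $\chi(T^{-1}t_1)$ and the factor $(1-\chi_{\epsilon})$ equal one at the impact covector, and $\pi_H^*(\gamma^{-1})\geq 1$ on $S^*_H M$ gives the bound $1/T$. Your extra bookkeeping about keeping the impact covectors an angle $\epsilon$ away from $N^*H$ (with the attendant small measure loss absorbed into $\delta$) addresses a point the paper's proof passes over silently when it asserts $(1-\chi_{\epsilon})(G^{t_1(x,\xi)}(x,\xi))=1$ on $\mathrm{supp}\,\chi_{T,\epsilon}$, but it does not change the argument.
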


\begin{proof}


The symbol of $P_{T, \epsilon}^1$ is

\begin{equation} \label{aTFORM2}   \sigma(P_{T,\epsilon}^1)(x,\xi)  
= \frac{1}{T}  \sum_{j \in {\mathbb Z}}  (1-\chi_{\epsilon}) \pi_H^* (\gamma^{-1})(G^{t_j(x,\xi)}(x,\xi))  \, \chi (T^{-1}  t_j(x,\xi))  \end{equation}
  where,
 $t_j(x,\xi) \in C^{\infty}(T^*M)$ are the impact times of the geodesic $\exp_{x}(t \xi)$ with $H$, where $\gamma$ is defined by (\ref{gammaDEF}).
 
 By definition of the cutoff \eqref{urysohn}, it follows that for $(x,\xi) \in \text{supp} \, \chi_{T,\ep},$ the hitting time   $|t_1(x,\xi)| < T$ and $(1-\chi_\ep) ( G^{t_1(x,\xi)}(x,\xi)) = 1.$ Consequently,
 
$$ \frac{1}{T}  \sum_{j \in {\mathbb Z}}  (1-\chi_{\epsilon}) \pi_H^* (\gamma^{-1})(G^{t_j(x,\xi)}(x,\xi))  \, \chi (T^{-1}  t_j(x,\xi))  \geq  \frac{1}{T} \pi_H^*(\gamma^{-1})(G^{t_1(x,\xi)}(x,\xi))  \geq \frac{1}{T}$$
since $ \pi_H^*(\gamma^{-1})(s,\eta) \geq 1$ for any $(s,\eta) \in S_H^*M.$
 
 

\end{proof}

By the pointwise local Weyl law, for any $a \in S^0(T^*H),$

$$\limsup_{\lambda \to \infty} \frac{1}{N(\lambda)} \sum_{j: \lambda_j \leq \lambda} \left|  \langle (\gamma_H^* Op_H(a)
 \gamma_H) )_{\leq \epsilon} \phi_j , \phi_j \rangle_{L^2(M)} \right|^2 = O(\ep)$$ 
 and so, from (\ref{key2}) if follows that for any fixed $T>0,$
 
 \begin{equation} \label{key3}
 \limsup_{\lambda \to \infty} \frac{1}{N(\lambda)} \sum_{j: \lambda_j \leq \lambda} \left|  \, \| \phi_j^H \|_{L^2(H)}^2   - \langle P^1_{T,\epsilon} \phi_j,  \phi_j
 \rangle_{L^2(M)}  \,  \right|^2 =  O(\ep). \end{equation}

 We note that in (\ref{key3}), one is free to choose the time-average parameter $T$. In \cite{TZ} we take $T \to \infty$ in order to apply the mean ergodic theorem, but here we will {\em not} take $T \to \infty;$ rather, here $T$  will be a fixed constant to be specified later on. Consequently,  taking $\liminf_{\ep \to 0}$ of both sides of (\ref{key3}), it follows that  there is a density-one subset $S \subset \{1,...,\lambda \}$ such that

\begin{equation} \label{liminf}
\liminf_{\ep \to 0} | \,  \| \phi_j^H \|^2_{L^2(H)}  -  \langle P^1_{T,\epsilon}  \phi_j,  \phi_j
 \rangle_{L^2(M)}  \, | = o(1), \quad \lambda_j \to \infty, \,  j \in S. \end{equation}\

As a consequence of (\ref{liminf}) it suffices to estimate $ \liminf_{\ep \to 0} \langle P^1_{T,\epsilon}  \phi_j,  \phi_j
 \rangle_{L^2(M)}$ from below. To do this, we will need the microlocal ellipticity result in Lemma \ref{MEPLEM} combined with the following   lemma on a priori, microlocal eigenfunction  mass estimates near $S_H^*M$ under the full-measure flowout assumption.

  \subsubsection{Microlocal eigenfunction mass estimates}

In this section we prove that for all $\delta >0$ there exists a positive   lower bound $C_{\delta} > 0$ 
on a density $\geq 1 - \delta$ subsequence for the matrix
elements of $\langle P_{T, \epsilon} \phi_{j_k}, \phi_{j_k} \rangle$
where $\epsilon = \epsilon(\delta)$ we be taken sufficiently small.
  Here, $P_{T, \epsilon} $ corresponds to a positive
symbol on $B^*H$ and as above we take it to equal $1$. Then,
$P_{T, \epsilon} =  \chi_{T, \ep(\delta)}(x,D_x)$ \eqref{urysohn}.)

 \begin{lem} \label{mass} Fix $T >0.$ Then, for any $0 < \delta <1,$ there exists a subsequence $\scal(\delta)$ of density greater that $1-\delta$ such that with $\ep = \ep(\delta)>0$ sufficiently mall, there exists $C_{\delta}>0,$
 $$ \langle \chi_{T, \ep(\delta)}(x,D_x) \phi_j, \phi_j \rangle_{L^2(M)} \geq C_{\delta}, \quad j \in \scal(\delta).$$
 \end{lem}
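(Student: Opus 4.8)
The plan is to deduce Lemma \ref{mass} from the microlocal ellipticity estimate (Lemma \ref{MEPLEM}) together with the local Weyl law and the full-measure flowout assumption, using a pigeonhole/Chebyshev argument to extract the density-$\geq 1-\delta$ subsequence.  First I would fix $\delta \in (0,1)$ and, invoking \eqref{measure1}, choose $R = R(\delta)$ and $\epsilon = \epsilon(\delta)$ so that $\mu_L(\Lambda_{R,\epsilon}) \geq 1 - 2\delta'$ for a suitable $\delta' \ll \delta$; this determines the Ur\'ysohn cutoff $\chi_{R,\epsilon} = \chi_{T,\epsilon}$ in \eqref{urysohn}, with $\chi_{T,\epsilon} \equiv 1$ on $\overline{\Lambda_{R,\epsilon}}$ and $\equiv 0$ off $\Lambda$.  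Set $P^1_{T,\epsilon} = \chi_{T,\epsilon}(x,D_x)$.  By Lemma \ref{MEPLEM}, $\sigma(P^1_{T,\epsilon}) \geq \frac{1}{T}\mathbf{1}_{\operatorname{supp}\chi_{T,\epsilon}} \geq \frac{1}{T}\mathbf{1}_{\Lambda_{R,\epsilon}}$, and one may arrange $0 \leq \sigma(P^1_{T,\epsilon}) \leq C_T$ as well.

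Next I would apply the local Weyl law \eqref{LWL} to $P^1_{T,\epsilon}$: since the matrix elements $\langle P^1_{T,\epsilon}\phi_j,\phi_j\rangle$ are (up to $O(\lambda_j^{-\infty})$, as $P^1_{T,\epsilon}$ is a nonnegative quantization modulo lower order) nonnegative, and
$$ \frac{1}{N(\lambda)}\sum_{j:\lambda_j\leq\lambda}\langle P^1_{T,\epsilon}\phi_j,\phi_j\rangle \;\longrightarrow\; \int_{S^*M}\sigma(P^1_{T,\epsilon})\,d\mu_L \;\geq\; \frac{1}{T}\mu_L(\Lambda_{R,\epsilon}) \;\geq\; \frac{1-2\delta'}{T}, $$
the average of these nonnegative quantities is bounded below by roughly $(1-2\delta')/T$.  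A complementary upper bound on the average, $\leq C_T \cdot (1 + o(1))$, also holds.  Now I would run a Chebyshev/reverse-Markov argument: if the set of indices $j$ with $\langle P^1_{T,\epsilon}\phi_j,\phi_j\rangle < C_\delta$ (for a small threshold $C_\delta$ to be chosen, say $C_\delta = \frac{1-3\delta'}{2T}$) had upper density $> \delta$, then the contribution of those indices to the Weyl average would be at most $C_\delta \cdot \delta + C_T\cdot o(1)$ from them and we would lose too much mass relative to the lower bound $(1-2\delta')/T$ — contradiction provided $\delta'$ is taken small enough relative to $\delta$ and $C_\delta$ small enough.  More precisely, writing $S(\delta) = \{ j : \langle P^1_{T,\epsilon}\phi_j,\phi_j\rangle \geq C_\delta\}$ and using $\langle P^1_{T,\epsilon}\phi_j,\phi_j\rangle \leq C_T + o(1)$ uniformly,
$$ \frac{1-2\delta'}{T} \leq \limsup_\lambda \frac{1}{N(\lambda)}\Big( \sum_{j\in S(\delta),\lambda_j\leq\lambda} C_T + \sum_{j\notin S(\delta),\lambda_j\leq\lambda} C_\delta \Big) = C_T\, D^*(S(\delta)) + C_\delta(1 - D^*(S(\delta))), $$
which forces $D^*(S(\delta)) \geq \frac{(1-2\delta')/T - C_\delta}{C_T - C_\delta} \geq 1 - \delta$ once $\delta'$ and $C_\delta$ are chosen appropriately small depending on $\delta, T$.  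This yields the asserted subsequence $\scal(\delta)$ with $D^*(\scal(\delta)) \geq 1-\delta$ and $\langle\chi_{T,\epsilon(\delta)}(x,D_x)\phi_j,\phi_j\rangle \geq C_\delta$ for $j\in\scal(\delta)$.

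The main obstacle I anticipate is bookkeeping the various small parameters ($\delta$ versus $\delta'$, the threshold $C_\delta$, and their dependence on $T$ and on the uniform upper bound $C_T$ for $\sigma(P^1_{T,\epsilon})$) so that the reverse-Markov inequality genuinely closes — i.e.\ ensuring the "lost mass" from the bad set is strictly dominated by the Weyl lower bound.  A secondary technical point is justifying that the nonnegativity of $\langle P^1_{T,\epsilon}\phi_j,\phi_j\rangle$ holds up to a negligible error: one should fix a Friedrichs (or other positive) quantization of the nonnegative symbol $\sigma(P^1_{T,\epsilon})$ so that the matrix elements are $\geq -o(1)$, and absorb the discrepancy with the operator $P^1_{T,\epsilon}$ produced by Proposition \ref{VTDECOMPa} into the $o(1)$ errors already present in \eqref{key3}; this is routine but must be stated.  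Finally, one notes that the parameter $T > 0$ here is fixed (not sent to $\infty$), so no ergodic-theoretic input is needed — the full-measure flowout assumption \eqref{ASSUME}, through \eqref{measure1}, is exactly what guarantees $\mu_L(\Lambda_{R,\epsilon})$ can be made close to $1$, which is the only place the hypothesis enters.
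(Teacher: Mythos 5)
Your argument has a genuine gap at its first step, and it is exactly the point the lemma is designed to overcome. You claim $\sigma(P^1_{T,\ep})\geq \frac{1}{T}\mathbf 1_{\operatorname{supp}\chi_{T,\ep}}\geq \frac{1}{T}\mathbf 1_{\Lambda_{R(\delta),\ep(\delta)}}$ and hence that the local Weyl average of the matrix elements is $\geq (1-2\delta')/T$. This chain is internally inconsistent. Lemma \ref{MEPLEM} gives the bound $\sigma(P^1_{T,\ep})\geq 1/T$ only on $\operatorname{supp}\chi_{T,\ep}$, and its proof requires that on this support the first hitting time satisfies $|t_1(x,\xi)|<T$ (so that $\chi(T^{-1}t_1)=1$); this is also what the later application in \eqref{elliptic5} needs. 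On the large-$R$ set $\Lambda_{R(\delta),\ep(\delta)}$ with $R(\delta)\gg T$, the symbol \eqref{aTFORM2} simply vanishes at points all of whose impact times exceed $2T$, so the second inequality is false. For the \emph{fixed} $T$ of the lemma, the flowout hypothesis \eqref{ASSUME}/\eqref{measure1} gives no lower bound close to $1$ on $\mu_L(\Lambda_{T,\ep})$ — it only gives $\mu_L(\Lambda_{R,\ep})\to 1$ as $R\to\infty$, $\ep\to 0$. Consequently the Weyl average of $\langle \chi_{T,\ep}(x,D_x)\phi_j,\phi_j\rangle$ (equivalently of $\langle P^1_{T,\ep}\phi_j,\phi_j\rangle$; note also that $P^1_{T,\ep}=P_{T,\ep}(1)$ is \emph{not} $\chi_{T,\ep}(x,D_x)$, and conflating them obscures the issue) is in general a constant well below the pointwise upper bound on these matrix elements, and then your reverse-Markov/Chebyshev step only produces a good set of density comparable to that average, not $\geq 1-\delta$. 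Indeed, first-moment information alone is compatible with, say, a density-$1/10$ subsequence carrying essentially all the microlocal mass over $\Lambda_{T,\ep}$ while the complementary density-$9/10$ subsequence carries none; no pigeonhole on the mean can rule this out, so the problem is not the bookkeeping of $\delta'$, $C_\delta$, $C_T$ but the strategy itself.

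The missing ingredient, and the heart of the paper's proof, is a long-time Egorov averaging \emph{before} any Weyl-law argument: one replaces $\chi_{T,\ep}$ by $(\chi_{T,\ep})_R=\frac1R\int_0^R U(-t)\chi_{T,\ep}U(t)\,dt$, which leaves the matrix elements unchanged up to $O_T(\lambda_j^{-1})$, while its symbol $\frac1R\int_0^R \chi_{T,\ep}\circ G^t\,dt$ is, by \eqref{measure1}, bounded below by $2C(\delta)>0$ on $\operatorname{supp}\chi_{R(\delta),\ep'(\delta)}$, a set of Liouville measure $1-O(\delta)$ — this is precisely where the flowout assumption enters, and why the constant $C_\delta$ degenerates as $\delta\to 0$ (roughly $C(\delta)\sim T/R(\delta)$). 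Sharp G\aa rding then gives $\langle(\chi_{T,\ep})_R\phi_j,\phi_j\rangle\geq 2C(\delta)\langle\chi_{R,\ep'}\phi_j,\phi_j\rangle+O_\delta(\lambda_j^{-1})$, and after adding the term $C_\delta\langle(I-\chi_{R,\ep'})\phi_j,\phi_j\rangle$ one obtains a quantity $\beta(\lambda_j;R,\ep)\geq C_\delta+O(\lambda_j^{-1})$ uniformly, since $\|\phi_j\|_{L^2}=1$. The correction term $C_\delta\langle(I-\chi_{R,\ep'})\phi_j,\phi_j\rangle$ is then controlled not in the first moment but in the \emph{second} moment: by the local Weyl law applied to $(I-\chi_{R,\ep'})^2$ its variance sum is $O(C_\delta\,\delta)$, and Chebyshev removes only an $O(\delta)$-density set, yielding $\langle\chi_{T,\ep(\delta)}\phi_j,\phi_j\rangle\geq C_\delta/2$ on a set of density $\geq 1-C\delta$. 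Your Weyl-law-plus-Chebyshev instinct is used in the paper too, but it must be applied after the $R(\delta)$-averaging and to the variance of the small remainder, not to the first moment of the fixed-time operator.
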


\begin{proof} Throughtout  $T >0$ will be fixed and so dependence of constants on $T$ will be suppressed.  W let $R>0$ be an independent parameter that we will choose sufficiently large (see (\ref{small volume}) below).  Letting $(\chi_{T,\ep(\delta)})_R := \frac{1}{R} \int_{0}^R U(-t) \chi_{T,\ep(\delta)} U(t) dt,$ it follows that
\begin{equation} \label{egorov1}
\langle \chi_{T,\ep(\delta)} \phi_j, \phi_j \rangle_{L^2(M)} = \langle (\chi_{T,\ep(\delta)})_R \, \phi_j, \phi_j \rangle_{L^2(M)} + O_{T}(\lambda_j^{-1}).\end{equation} \
In view of (\ref{measure1}) and since $\sigma ( (\chi_{T,\ep(\delta)})_R )(x,\xi) = \frac{1}{R} \int_0^R \chi_{T,\ep(\delta)}(G^t(x,\xi)) dt$ can find $R = R(\delta)$ and enlarge $\ep(\delta)$ to  $ \ep'(\delta) > \epsilon(\delta)$ so that  \begin{equation} \label{small volume}
 \mu_L ( \, \text{supp}(1- \chi_{R(\delta), \ep'(\delta)} )  \, ) = O(\delta)\end{equation}
 and there exists $C(\delta)>0$ with
 
\begin{equation} \label{elliptic1}
 \sigma( (\chi_{T,\ep(\delta)})_{R(\delta)} )(x,\xi) \geq 2 C(\delta)>0, \quad (x,\xi) \in \text{supp} \, \chi_{R(\delta),\ep'(\delta)}.\end{equation}

 To simplify the writing, in the following we sometimes suppress the dependence of $R, \ep$ and $\ep'$ on $\delta$. 

 

 By \eqref{elliptic1},  $(\chi_{T,\ep})_R $  is microlocally elliptic on supp $ \chi_{R,\ep'} $ and so by the sharp Garding inequality (cf. \cite[Theorem 18.1.14]{HoIII} or \cite[Theorem 6.1, p. 20]{Tay}) applied to the
 operator $\chi_{R, \epsilon}(x, D) - 2 C(\delta) \, \chi_{R, \epsilon'}(x,D)$,
 
 \begin{equation} \label{elliptic2}
 \langle (\chi_{T,\ep})_R (x,D_x) \phi_j, \phi_j \rangle_{L^2(M)} \geq 2 C(\delta) \langle \chi_{R,\ep'} \phi_j, \phi_j \rangle_{L^2(M)}  + O_\delta(\lambda_j^{-1})\end{equation}
 We recall that  the sharp Garding inequality states: If   $p \in S^{0}$ with $\Re p \geq 0$, then  there exists a constant $C_0 > 0$ so that   $\Re \langle p(x,D) u,u \rangle \geq - C_0 ||u||^2_{H^{-1/2}}.$  
 

One can write
\begin{align} \label{egorov2}
\langle \chi_{R,\ep} \phi_j, \phi_j \rangle = \beta(\lambda_j;R,\ep) - C_{\delta} \langle (I-\chi_{R,\ep'}) \phi_j, \phi_j \rangle_{L^2(M)}+ O_{\delta}(\lambda_j^{-1}), &
\end{align}
where $$\beta(\lambda_j;R,\ep):=   \, \langle (\chi_{T,\ep})_R \chi_{R,\ep'} \phi_j , \phi_j \rangle + \langle  (\chi_{T,\ep})_R (I- \chi_{R,\ep'}) \phi_j, \phi_j \rangle  + C_\delta \langle (I-\chi_{R,\ep'}) \phi_j, \phi_j \rangle. $$
The point of isolating the $\beta(\lambda_j,R,\ep)$ term on the RHS of (\ref{egorov2}) is that, as we now show,  this term is uniformly bounded from below as $\lambda_j \to \infty.$ The  $C_{\delta}(1- \chi_{R,\ep'}) $-term is added in the definition of $\beta(\lambda_j,R,\ep)$ to get a globally elliptic operator.  

More precisely, from (\ref{elliptic2}) \begin{align} \label{beta1}
\beta(\lambda_j;R,\ep) & \geq C_\delta \langle \chi_{R,\ep'} \phi_j, \phi_j \rangle + \langle  (\chi_{T,\ep})_R (I- \chi_{R,\ep'}) \phi_j, \phi_j \rangle + C_\delta \langle (I-\chi_{R,\ep'}) \phi_j, \phi_j \rangle. \end{align}

Using the fact that $(\chi_{T,\ep})_R \,  (1-\chi_{R,\ep'}(x,\xi)) \geq 0,$ it follows by application of sharp Garding in the second term on the RHS of (\ref{beta1}) that
\begin{equation} \label{beta2}
 \beta(\lambda_j;R,\ep)  \geq C_\delta  \, \langle \chi_{R,\ep'} \phi_j, \phi_j \rangle + C_\delta \langle (I-\chi_{R,\ep'}) \phi_j, \phi_j \rangle + O(\lambda_j^{-1}) \geq C_\delta + O(\lambda_j^{-1}),\end{equation}
since $\| \phi_j \|_{L^2}^2 =1.$ 

To estimate the matrix value $\langle \chi_{R,\ep} \phi_j, \phi_j \rangle,$ the term involving $C_\delta (1- \chi_{R,\ep'})$ is subtracted out in (\ref{egorov2}), but in the variance sum this term gives a small contribution since  $ \mu_L ( \text{supp} \, (1-\chi_{R,\ep'})$ is small.  More precisely, since $|\langle (I-\chi_{R,\ep'}) \phi_j, \phi_j \rangle |^2
\leq \langle (I-\chi_{R,\ep'})^2 \phi_j, \phi_j \rangle $ it follows from the   local Weyl law  that
$$ \limsup_{\lambda \to \infty} \frac{1}{N(\lambda)} \sum_{\lambda_j \leq \lambda} | \, \langle (I-\chi_{R,\ep'}) \phi_j, \phi_j \rangle |^2 \leq \int_{S^*M} (1 -\chi_{R,\ep'})^2=  O(\delta),$$
from which it follows that
$$\limsup_{\lambda \to \infty} \frac{1}{N(\lambda)} \sum_{\lambda_j \leq \lambda} | \, \langle \chi_{R,\ep} \phi_j, \phi_j \rangle - \beta(\lambda_j;R,\ep) \, |^2   = O(C_\delta \, \delta ).$$

By Chebyshev's inequality,
$$D^* ( \{j;  | \langle \chi_{R,\ep} \phi_j, \phi_j \rangle -  \beta(\lambda_j;R,\ep| 
\geq \frac{C_\delta}{2} \} )  $$ 
$$ = O\Big(\frac{2}{C_\delta} C_\delta \, \delta \Big) = O(\delta),$$
and consequently,
\begin{equation} \label{DCAL1}  D^* ( \{j;  | \langle \chi_{R,\ep} \phi_j, \phi_j \rangle -  \beta(\lambda_j;R,\ep| 
\leq \frac{C_\delta}{2} \} ) \geq 1 - C \delta, \end{equation}

where $C>0$ is a constant independent of $\delta >0.$
For  eigenfunctions $\phi_j$ satisfying $ | \langle \chi_{R,\ep} \phi_j, \phi_j \rangle -  \beta(\lambda_j;T,\ep| 
\leq \frac{C_\delta}{2}$  it folllows from the lower bound  $\beta(\lambda_j;R,\ep)\geq C_\delta + O(\lambda_j^{-1})$ in (\ref{beta2}) that for $\lambda_j$ sufficiently large, 
$$ \langle \chi_{R,\ep} \phi_j, \phi_j \rangle \geq \frac{C_{\delta} }{2}>0.$$ 

Since then
\begin{equation} \label{DCAL2} D^* (\{j;   \langle \chi_{R,\ep} \phi_j, \phi_j \rangle \geq  \frac{C_\delta}{2} \} )  \geq 1 - C \delta,\end{equation}

that finishes the proof of Lemma \ref{mass}.


\end{proof}


 We are now in a position to prove lower bounds for  $\langle P^1_{T,\ep(\delta)} \phi_j, \phi_j \rangle_{L^2(M)}$ where $j \in S(\delta)$ with ${\mathcal D}(S(\delta)) \geq 1- \delta.$
 To do this, we use the  sharp Garding inequality  yet again.
Recalling \eqref{PT1}, we have $\sigma(P^1_{T, \ep})
\geq \sigma (P^1_{T,\ep} \chi_{T,\ep})$ and since  $P^1_{T,\ep} \in Op(S^0)$ and $P^1_{T,\ep} \, \chi_{T,\ep} \in Op(S^0),$ it  follows from the sharp Garding inequality that

$$\langle P^1_{T, \ep} \phi_j, \phi_j \rangle_{L^2(M)} \geq  \langle P^1_{T,\ep} \chi_{T,\ep} \phi_j, \phi_j \rangle_{L^2(M)}  - C_{\ep} \| \phi_j \|_{H^{-\frac{1}{2}}(M)}^2,$$
and so,

\begin{equation} \label{elliptic4}
\langle P^1_{T, \ep} \phi_j, \phi_j \rangle_{L^2(M)} \geq  \langle P^1_{T,\ep} \chi_{T,\ep} \phi_j, \phi_j \rangle_{L^2(M)} + O_{\ep}(\lambda_j^{-1}). \end{equation}\

In view of (\ref{elliptic4}), it is enough to bound the matrix elements $\langle P^1_{T,\ep} \chi_{T,\ep} \phi_j, \phi_j \rangle_{L^2(M)}$ from below.  Combining  the microlocal mass estimate in Lemma \ref{mass} and the microlocal ellipticity result in Lemma \ref{MEPLEM}, it follows by sharp Garding that with $\epsilon = \epsilon(\delta)$ sufficiently small,

\begin{equation} \label{elliptic5}
\langle P^1_{T, \ep(\delta)} \, \chi_{T,\ep(\delta)}\, \phi_j, \phi_j \rangle_{L^2(M)} \geq \frac{1}{T} \langle \chi_{T,\ep(\delta)} \phi_j, \phi_j \rangle_{L^2(M)}  - C_{\delta} \lambda_j^{-1} \geq \frac{C_{\delta}}{2T},  \quad j \in \scal(\delta), \,\, \lambda_j \geq \lambda(\delta). \end{equation}\
 Consequently, from (\ref{elliptic4}) and (\ref{elliptic5}),
\begin{equation} \label{elliptic6b}
\langle P^1_{T, \ep(\delta)} \phi_j, \phi_j \rangle_{L^2(M)} \geq  \frac{C_{\delta}}{2T}  + O_{\delta}(\lambda_j^{-1}), \quad j \in \scal(\delta). \end{equation}\

From (\ref{liminf}) it follows that after possibly shrinking $\ep(\delta)$ further, 
\begin{equation} \label{liminf2}
| \,  \| \phi_j^H \|^2_{L^2(H)}  -  \langle P^1_{T,\epsilon(\delta)}  \phi_j,  \phi_j
 \rangle_{L^2(M)}  \, | \leq \frac{C_{\delta}}{4T}, \quad \lambda_j  \geq \lambda(\delta),\,\,  j \in \scal. \end{equation}\

 Now, restricting to  $j \in \scal(\delta) \cap \scal$ in (\ref{liminf2}) and using (\ref{elliptic6b}) 
 one gets
 $$ \| \phi_j^{H} \|^2_{L^2(H)} \geq \frac{ C_{\delta}}{4T}, \quad \lambda_j \geq \lambda(\delta), \,\, j \in \scal \cap \scal(\delta).$$
 
This completes the proof of Theorem  \ref{mainthm1}.


 
 

 \end{proof}

  \section{Examples of hypersurfaces with $\mu_L(\rm{FL}(H)) = 1.$} \label{examples}

\subsection{Simple convex surfaces of revolution}

Let $(M,g)$ be a strictly-convex surface of revolution with metric
$g = d\theta^2 + f(\theta) d\phi^2$ where $0 \leq \theta \leq L$ and $\phi \in [0,2\pi]$ and $f \in C^{\infty}([0,L],\R)$ with $f(\theta) >0,$
$$ f'(\theta_0) = 0, \quad f''(\theta_0) <0 $$
and
$$ f'(\theta) \neq 0 \quad \text{for} \,\,  \theta \neq \theta_0.$$

The Hamiltonian 
$$H(\theta,\phi,\xi_{\theta}, \xi_{\phi}) = \xi_{\theta}^2 + f^{-1}(\theta) \xi_{\phi}^2$$
is Liouville completely integrable with integral in involution
$P((\theta,\phi,\xi_{\theta}, \xi_{\phi}) = \xi_{\phi}.$

The ``equator" of the surface is the periodic geodesic
$$\gamma_0 := \{ \theta = \theta_0, \,\, 0 \leq \phi \leq 2\pi \}.$$
The moment map (restricted to $S^*M$) is
$${\mathcal P}:= (1, \xi_{\phi}): S^*M \to \R^2$$
If ${\mathcal B}_{reg}$ denotes the regular values of the moment map, ${\mathcal P}^{-1}(b)$ consists of (two) Lagrangian tori \cite{TZ4} (with slight abuse of notation we denote them both by $\Lambda_{b}\subset S^*M$). If $S^*M_{reg} := \cup_{b \in {\mathcal B}_{reg}} \Lambda_b$ then 
$$ \mu_L ( S^*M_{reg} ) = 1.$$
 The projections $\pi(\Lambda_b)$ with $b \in {\mathcal B}_{reg}$ of the invariant tori are the equatorial bands 
\begin{equation} \label{band}
\pi(\Lambda_b) = \{ (\theta, \phi);  \theta \in [\theta_0 - r_1(b), \theta_0 + r_1(b)], \, 0 \leq \phi \leq 2\pi \} \end{equation}
where $f(r_j(b)) =  b, \,\, j=1,2.$

On $S^*M_{reg}$ there exist action-angle variables $(\theta, I)$ in terms of which the Hamiltionian $H = H(I)$ and so the Hamltion equations are solvable by quadrature. Explicitly, the action variables in this case are \cite{TZ4}
$$I_1(b) = b, \quad I_2(b) = \frac{1}{\pi} \int_{r_1(b)}^{r_2(b)} \Big( 1- \frac{b^2}{f^2(\theta)} \Big)^{\frac{1}{2}} \, d\theta.$$

Under the twist assumption
\begin{equation} \label{twist}
\nabla_I \omega(I) \neq 0, \quad \omega(I)  := \nabla_I H(I) \end{equation}
on the metric, there exists a family of irrational tori (which we denote by $\cup_{b \in \Q^c} \Lambda_b$ such that the geodesic flow on each such torus $\Lambda_b$ is dense. That is, for any $(\theta, I) \in \Lambda_b $ with $b \in \Q^c$ we have that 
\begin{equation} \label{dense}
\overline{ \bigcup_{t \in \R} \gamma(t; I,\theta) } = \Lambda_b,\quad b \in \Q^c. \end{equation}
Consequently, for  the projected geodesic,
\begin{equation} \label{denseproj}
\overline{ \bigcup_{t \in \R} \pi \circ \gamma(t; I,\theta) } = \pi(\Lambda_b),\quad b \in \Q^c. \end{equation}

Now suppose $H \subset M$ is an simple, closed curve with
$$H \cap \gamma_0 \neq \emptyset.$$
  Then, in view of (\ref{denseproj}) and the band stucture in (\ref{band}) it follows that for any geodesic $\gamma(t; \theta,I)$ on $\Lambda_b$ with $b \in \Q^c,$ the projection $\pi \circ \gamma(t;\theta,I)$ must intersect $H$ for some $t \in \R$ and so,
$$\bigcup_{t \in \R} \gamma(t;\theta,I) \cap S_H^*M \neq \emptyset.$$

Then since $\Lambda_b$ is $G^t$-invariant, $ \bigcup_{b \in \Q^c} \Lambda_b \subset \rm{FL}(H)$ and so,
$$\mu_L( \rm{FL}(H)) \geq \mu_L ( \bigcup_{b \in \Q^c} \Lambda_b) = \mu_L( S^*M_{reg}) = 1.$$

Since trivially $\mu_L (\rm{FL}(H)) \leq 1$ it follows that $\mu_L( \rm{FL}(H)) =1.$ \\

To summarize: under the twist condition (\ref{twist}), for any  simple closed curve $H$ with 
$H \cap \gamma_0 \neq \emptyset,$
we have that $\mu_L (\rm{FL}(H)) =1.$\\

It is well-known \cite{Bl} that both oblong $(a<b)$ and oblate $(a>b)$ ellipsoids $\frac{x^2}{a^2} + \frac{y^2}{a^2} + \frac{z^2}{b^2} = 1$ satisfy the twist condition, whereas the sphere  does not. Indeed,  in the latter case $\gamma_0 = \{ (x,y,z) \in S^2; z =0 \}$ and so the condition $H \cap \gamma_0 \neq \emptyset$ is clearly not sufficient since for any closed curve $H$ with diam $H < $ diam $\gamma_0$ there exist a postiive measure of $2\pi$-periodic great circles that do not intersect $H.$

\subsection{Liouville metrics on tori}
A Liouville torus $(M,g)$ is a topological two-torus with metric
$$ g = [U_1(x_1) - U_2(x_2)] ( dx_1^2 + dx_2^2), \quad x = (x_1,x_2) \in [0,1] \times [0,1].$$
Here, $U_j$ are smooth Morse functions with period $1$ and are required ro satisfy
$U_1(x_1) - U_2(x_2) >0$ for all $x \in [0,1] \times [0,1].$ Moreover, we assume that $U_1$ and $U_2$ each have one maximum and minimum and that these critical points are all distinct.
The associated geodesic flow is generated by the Hamiltonian
$$H(x,\xi) = [ U_1(x_1) - U_2(x_2)]^{-1} ( \xi_1^2 + \xi_2^2)$$
and the integral in involution is
$$P(x,\xi) = \frac{U_2(x_2)}{ [ U_1(x_1) - U_2(x_2)]  } \xi_1^2 + \frac{U_1(x_1)}{[ U_1(x_1) - U_2(x_2)]} \xi_2^2.$$
The restricted moment map is then
${\mathcal P} = (1,P): S^*M \to \R^2.$  Then \cite{TZ4}, the singluar leaves of the Lagrangian foliation consist of two horizontal periodic geodesics $\gamma_h^1$ and $\gamma_h^2$ along with two vertical ones $\gamma_v^1$ and $\gamma_v^2$. The associated bands (projections onto $M$ of invariant Lagrangian tori) come in two horizontal (resp. vertical) families containing the projected geodesics $\pi \circ \gamma_h^{1,2}$ (resp. $\pi \circ \gamma_v^{1,2}).$ (see \cite{TZ4} for further details).\\

A similar argument to the one above for revolution surfaces, shows that under a twist assumption on $g,$ for any simple closed curve $H \subset M$ satisfying
$$ H \cap \gamma_h^{1,2} \neq \emptyset, \quad H \cap \gamma_v^{1,2} \neq \emptyset,$$
we have $\mu_L (\rm{FL}(H)) =1.$

\end{document}